\newtheorem{theorem}{Theorem}[section]
\newtheorem{lemma}[theorem]{Lemma}
\newtheorem{corollary}[theorem]{Corollary}
\newtheorem{obs}[theorem]{Observation}
\newtheorem{prob}[theorem]{Problem}
\newtheorem{claim}{Claim}[theorem]
\newtheorem{cclaim}[theorem]{Claim}
\theoremstyle{definition}
\newtheorem{definition}[theorem]{Definition}
\newcommand{\sat}[1]{{#1}\text{-linked}}
\newcommand{\gr}{G=(V,E)}
\newcommand{\lkd}{\text{infinitely linked}}
\newcommand{\ds}[1]{\displaystyle{#1}}
\newif\ifdeveloping
\newif\ifcommented
\newcommand{\comm}[1]{}
\renewcommand{\comm}[1]{
\fbox{\fbox{\begin{minipage}{300pt}#1\end{minipage}}
}}
\newcommand{\smf}{\hspace{0.008 cm}^\smallfrown}
\newcommand{\mc}[1]{\mathcal{#1}}
\newcommand{\mbb}[1]{\mathbb{#1}}
\newcommand{\mf}[1]{\mathfrak{#1}}
\newcommand{\uhp}{\upharpoonright}
\newcommand{\omg}{{\omega_1}}
\newcommand{\half}{H_{\oo,\oo}}
\newcommand{\halff}[1]{H_{#1,#1}}
\newcommand{\NN}{\mathbb{N}}
\newcommand{\setm}{\setminus}
\newcommand{\empt}{\emptyset}
\newcommand{\subs}{\subset}
\def\<{\left\langle}
\def\>{\right\rangle}
\def\br#1;#2;{\bigl[ {#1} \bigr]^ {#2} }
\newcommand{\oo}{{\omega_1}}
\newcommand{\vv}[1]{\operatorname{V}(#1)}
\newcommand{\nns}[2]{\operatorname{N}_{#1}[#2]}
\newcommand{\gm}[3]{\operatorname{\mf G}_{#3}(#1,#2)}
\newcommand{\ekk}{k}
\newcommand{\Covrel}[3]{#1 \sqsubset  (#2)_{#3}}
\newcommand{\notCovrel}[3]{#1 \not\sqsubset  (#2)_{#3}}
\newcommand{\Covrelst}[3]{#1 \sqsubset^*  (#2)_{#3}}
\definecolor{cthm}{rgb}{0,0,0.7} 
\definecolor{cdef}{rgb}{1,0,0} 
\definecolor{midg}{rgb}{0,0.7,0} 
\definecolor{cemph}{rgb}{0,0,1} 
\definecolor{orange}{rgb}{1,0.3,0}
\definecolor{depg}{rgb}{0,0.5,0} 
\def\oe\{#1,#2\}{#1#2}
\begin{document}

\begin{frontmatter}

\title{Decompositions of edge-colored infinite complete graphs into monochromatic paths}

\author{M\'arton Elekes}
\address
      { Alfr\'ed R\'enyi Institute of Mathematics, 
        Hungarian Academy of Sciences, Re\'altanoda u. 13-15, Budapest
1053, Hungary and E\"otv\"os Lor\'and University, Department of Analysis,
P\'az\-m\'any P. s. 1/c, Budapest 1117, Hungary}
\ead{elekes.marton@renyi.mta.hu}
\ead[url]{http://www.renyi.hu/$\sim$emarci}

\author{D\'aniel T. Soukup}
\address
      {Department of Mathematics and Statistics, University of Calgary, Calgary, Alberta, Canada T2N 1N4}
\ead{daniel.t.soukup@gmail.com}
\ead[url]{http://www.renyi.hu/$\sim$dsoukup}

\author{Lajos Soukup}
\address
      { Alfr\'ed R\'enyi Institute of Mathematics, 
        Hungarian Academy of Sciences, Re\'altanoda u. 13-15, Budapest
1053, Hungary}
\ead{soukup@renyi.hu}
\ead[url]{http://www.renyi.hu/$\sim$soukup}

\author{Zolt\'an Szentmikl\'ossy}

\address
      { E\"otv\"os Lor\'and University, Department of Analysis,
P\'az\-m\'any P. s. 1/c, Budapest 1117, Hungary}
\ead{szentmiklossy@renyi.hu}

\date{\today}

\begin{abstract}
An {\it $r$-edge coloring} of a graph or 
hypergraph $G=(V,E)$  is a 
map $c:E\to \{0, \dots, r-1\}$. 
Extending results of Rado and answering questions of Rado, Gy\'arf\'as and 
S\'ark\"ozy we prove that
\begin{itemize}
 \item the vertex set of every $r$-edge colored 
countably infinite complete $k$-uniform hypergraph  can be partitioned into $r$ 
monochromatic tight paths with distinct colors (a {\em tight path} in a 
$k$-uniform hypergraph is a sequence of distinct vertices
such that every set of $k$ consecutive vertices forms an edge);
\item for all natural numbers $r$  and $k$ there is a natural number $M$ 
such that the vertex  set of  every $r$-edge colored countably infinite  complete graph  
can be  partitioned into $M$ monochromatic $k^{th}$ powers of paths apart 
from a finite set (a {\em $k^{th}$ power of a path} is a sequence $v_0, v_1, \dots$ of
distinct vertices such that $1\le |i-j| \le k$ implies that $\oe\{v_i, v_j\}$ is an 
edge);
\item  the vertex set of every $2$-edge colored 
countably infinite 
complete graph  can be partitioned into 
$4$ monochromatic squares of paths, but not necessarily into $3$;
\item the vertex set of every $2$-edge colored  complete graph on $\omega_1$ 
can be partitioned  into $2$ monochromatic paths with distinct colors.
\end{itemize}

\end{abstract}

\begin{keyword}graph partition\sep monochromatic path\sep path square\sep infinite complete graph\sep uncountable complete graph, 
complete hypergraph\sep edge coloring

\MSC[2010]{05C63\sep 05C70}
\end{keyword}
\end{frontmatter}

\section{Introduction}

In this paper we will follow the conventions of \cite{Gy2}.

 Our goal is to find partitions of the vertex sets of edge-colored infinite 
graphs and hypergraphs into nice monochromatic subgraphs.  
In particular, we are interested 
in partitioning the vertices of complete graphs and hypergraphs into 
monochromatic paths and powers of paths. 

An {\it $r$-edge coloring} of a graph or hypergraph $G=(V,E)$  is a 
map $c:E\to \{0, \dots, r-1\}$ where $r \in \NN \setminus \{0\}$. 
Investigations began in the 1980s with a result of Rado \cite{R}  implying that
the vertex set of every $r$-edge colored countably infinite complete  graph  can be
partitioned into $r$ monochromatic paths with distinct colors; this includes the possibility that some of the paths are empty or single vertices. We will abbreviate this result as
\begin{equation}
 \Covrel{K_{\NN}}{\mf{Path},\dots ,\mf{Path}}{r}.
\end{equation}
This notation is in direct analogy with the usual ``arrow notation" from partition calculus and Ramsey-theory popularized by P. Erd\H os et al \cite{partcalc}; we give the exact definition in Section \ref{notsec}.

In Section \ref{sc:hyper}, answering a question of Gy\'arf\'as and S\'ark\"ozy from \cite{GyS1} 
we extend this  result  for hypergraphs  by proving
that  the vertex set of every $r$-edge colored countably infinite 
complete $k$-uniform hypergraph  can be partitioned into $r$ 
monochromatic tight paths with distinct colors (Theorem \ref{tm:tight_path}): 
\begin{equation}
 \Covrel{K^k_{\NN}}{\mf{TightPath},\dots ,\mf{TightPath}}{r}.
\end{equation}

Furthermore, Erd\H os, Gy\'arf\'as and Pyber \cite{EGyP} conjectured that 
the vertices of every $r$-edge colored
finite complete graph can be covered with $r$ disjoint monochromatic cycles.

This conjecture was disproved by Pokrovskiy \cite{Po2}.
However, the case $k=2$ of Theorem \ref{tm:tight_path}(2) yields that 
 the corresponding version 
of the conjecture above holds for 
countably infinite graphs:
{\em Given an $r$-edge coloring of
$K_\NN$, we can partition the vertices into $r$ 
disjoint cycles and $2$-way infinite paths of distinct colors.}

In Section \ref{sc:power}, we  prove that for all natural numbers $r$ and $k$ there is a natural  number $M$ 
such that the vertex set of every $r$-edge colored complete graph on $\NN$ can be 
partitioned into $M$ monochromatic $k^{th}$ powers of paths apart 
from a finite set (Theorem \ref{tm:fonat1}):
\begin{equation}
 \Covrelst{K_{\NN}}{\text{$k^{\mf th}-\mf{Power\  of\  Path}$}}{r,M}.
\end{equation}

 Using a recent result of Pokrovskiy on finite graphs 
we show that the vertex set of every $2$-edge colored 
complete graph on $\NN$ can be partitioned into $4$ monochromatic squares of paths:
\begin{equation}
 \Covrel{K_{\NN}}{\mf{PathSquare}}{2,4}.
\end{equation}

Finally, in Section \ref{longpath}, we give a partial answer to a question of
Rado from \cite{R} (the definitions are postponed to the section): 
the vertex set of every $2$-edge colored complete graph on $\omega_1$ 
(the smallest uncountable vertex set) can be partitioned into
$2$ monochromatic paths of distinct colors:
\begin{equation}
 \Covrel{K_{\omega_1}}{\mf{Path},\mf{Path}}{2}.
\end{equation}

The paper ends with the short Section \ref{sc:problems} on further results (without proofs) and open problems.

\section{Notations, preliminaries}\label{notsec}


The cardinality of a set $X$ is denoted by $|X|$. For a set $X$ and $k \in \NN$ 
we will denote the set of $k$-element subsets of $X$ by $[X]^k$. The set of all 
subsets of $X$ is denoted by $\mathcal{P}(X)$. 

For a graph $\gr$ and $v\in V$ we write 
$$N_G(v)=\{w\in V:\oe\{v,w\}\in E\},$$  and 
for $F\subs V$
$$N_G[F]=\{w\in V:  \oe\{v,w\}\in E\text{ for all }v\in F\}.$$
In particular, $N_G[\emptyset]=V$.


Let $c:E\to \{0, \dots, r-1 \}$ be an $r$-edge coloring of a graph $\gr$.

For $ v \in V$ and $i < r$ also let $$N_G(v,i)=\{w\in N_G(v):c(vw)=i\},$$ and 
for $F \subset V$ and $i < r$ let 
$$N_G[F,i]=
\{w\in V: c(vw)=i\text{ for all }v\in F\}.$$
In particular, $N_G[\empt, i]=V$ for all $i<r$.

As we always work with a fixed coloring, this notation will lead
to no misunderstanding (and sometimes we will even drop the subscript $G$). 

We will use $K_\NN$ to denote
the complete graph on $\NN$. A \emph{path} in a
graph is a finite or one-way infinite sequence of distinct vertices such
that each pair of consecutive vertices is connected by an edge. If $P$ is a
finite path and $Q$ is a disjoint path such that the end-point of $P$ is
connected by an edge with the starting point of $Q$ then $P^\frown Q$ denotes
their concatenation. We say that $Q$ \emph{end extends} $P$ if $P$ is an
initial segment of $Q$.

\begin{definition} Let $\gr$ be a graph and $A\subs V$.
We say that $A$ is \emph{$\lkd$} iff there are infinitely many 
vertex disjoint finite paths between any two distinct points of $A$. We say that $A$ is
\emph{infinitely connected} iff there are infinitely many vertex disjoint finite
paths \emph{inside $A$} between any two distinct points of $A$.
\end{definition}

%

\noindent\emph{Remark.} 
Clearly 
$A$ is $\lkd$ iff for every two distinct members $v$ and $w$ of $A$ and every finite set $F \subset
V(G) \setminus \{v, w \}$ there is a path connecting the two points and
avoiding $F$. Similarly, $A$ is infinitely connected if we can additionally
require that the path is \emph{inside $A$}.

\medskip

If we fix an edge coloring $c$ of $G$ with $r$ colors, $i < r$, $\mc P$ is a
graph property (e.g. being a path, being infinitely connected...) and $A\subs V$
then we say that $A$  \emph{has property $\mc P$ in color $i$} (with respect to
$c$) iff $A$ has property $\mc P$  in the graph $(V,c^{-1}(i))$. In particular,
by a \emph{monochromatic path} we mean a set $P$ which is a path in some color.

By convention, the empty set and singletons  are  monochromatic paths in any color. 

\begin{lemma}\label{uftrick} Let $G = (V, [V]^2)$ be a complete countably 
infinite graph. Given any edge
coloring $c: [V]^2 \to \{0, \dots, r-1 \}$, there is a function $d_c: V \to 
\{0, \dots, r-1 \}$ and an integer  $i_c < r$ such that the sets 
$V_i=d_c^{-1}\{i\}$ satisfy:
$$N[F,i]\cap V_{i_c} \text{ is infinite for all } i < r \text{ and finite set } 
F\subs V_i.
$$ 

In particular, $V_i$ is $\lkd$ in color $i$ for all $i < r$ and $V_{i_c}$
is infinitely connected in color $i_c$. 
\end{lemma}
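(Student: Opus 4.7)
The plan is to use the ultrafilter trick that the lemma's label \texttt{uftrick} already hints at. I would fix any non-principal ultrafilter $\mc{U}$ on $V$ at the outset; since $V$ is countable, such a $\mc{U}$ exists and every member of $\mc{U}$ is automatically infinite. The whole proof then reduces to two applications of the defining property of an ultrafilter: if a set is partitioned into finitely many pieces, exactly one piece belongs to $\mc{U}$.

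First, I would define $d_c$ vertex by vertex. For each $v \in V$ the sets $N(v,0), \dots, N(v,r-1)$ partition $V \setminus \{v\}$ into $r$ pieces, so exactly one of them lies in $\mc{U}$; let $d_c(v)$ be that unique index. This yields the partition $V = V_0 \sqcup \dots \sqcup V_{r-1}$ where $V_i = d_c^{-1}\{i\}$. Applying the ultrafilter property once more to this partition of $V$, I would pick the unique index $i_c < r$ with $V_{i_c} \in \mc{U}$.

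To verify the main conclusion, fix $i < r$ and a finite $F \subset V_i$. By the definition of $V_i$, every $v \in F$ satisfies $N(v,i) \in \mc{U}$, hence
\[
N[F,i] \;=\; \bigcap_{v \in F} N(v,i) \;\in\; \mc{U}
\]
as a finite intersection of ultrafilter members. Intersecting with $V_{i_c} \in \mc{U}$ keeps us inside $\mc{U}$, so $N[F,i] \cap V_{i_c}$ is infinite.

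The ``in particular'' clauses then fall out by connecting any two vertices through a common neighbor of the right color. For $V_i$ being $\lkd$ in color $i$, take distinct $u,v \in V_i$ and any finite $F \subset V \setminus \{u,v\}$; the case above applied to $\{u,v\} \subset V_i$ gives that $N[\{u,v\}, i]$ is infinite, so we may choose $w \in N[\{u,v\}, i] \setminus F$ and use the color-$i$ path $u,w,v$. For infinite connectedness of $V_{i_c}$ in color $i_c$ I would do the same, but pick $w$ inside $N[\{u,v\}, i_c] \cap V_{i_c}$, which is still infinite by the main conclusion; this keeps the path inside $V_{i_c}$. I do not foresee a real obstacle here: the only point that requires mild care is ensuring the connecting vertex can be taken in $V_{i_c}$, and this is precisely what the intersection with $V_{i_c}$ in the statement was designed to guarantee.
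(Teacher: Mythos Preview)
Your proof is correct and follows essentially the same approach as the paper: fix a non-principal ultrafilter, let $d_c(v)$ be the unique color whose neighborhood of $v$ lies in the ultrafilter, and choose $i_c$ so that $V_{i_c}$ belongs to the ultrafilter. If anything, your write-up is slightly more explicit than the paper's in verifying the main displayed conclusion for arbitrary finite $F$ rather than just pairs.
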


We remark here that the empty set and one element vertex sets are 
$\lkd$ in any color.

\begin{proof} Let $U$ be a non-trivial ultrafilter on $V$, see e.g. 
\cite{kunen}. (In other words, take a finitely additive $0/1$-measure on $\mc P(V)$ 
assigning measure $0$ to singletons, and let $U$ be the class of sets of measure 
$1$.) For $i < r$ define $V_i=\{v\in
V:N(v,i)\in U\}$ (e.g. $d_c\uhp V_i\equiv i$), and let $i_c$ be the unique 
element of $\{0, \dots, r-1\}$ with
$V_{i_c} \in U$. 

If $i<r$, and $v$ and $w$ are distinct elements of $V_i$, then 
$N_G[v]\cap N_G[w]\in U$, so the set $N_G[v]\cap N_G[w]$ is infinite,
and $vuw$ is a monochromatic path in color $i$ 
for all $u\in N_G[v]\cap N_G[w]$. So $V_i$ is $\lkd$ in color $i$.

If $i=i_c$, then even the set $N_G[v]\cap N_G[w]\cap V_{i_c}\in U$, and so 
$vuw$ is a monochromatic path in color $i$ {\em inside $V_{i_c}$}
for all $u\in N_G[v]\cap N_G[w]\cap V_{i_c}$. So $V_{i_c}$ is infinitely connected 
in color $i$.
\end{proof}

The next lemma looks slightly technical at first sight. However, note that for our first application, that is for the proof of Rado's theorem we can ignore the sets $A_j$, as well as the last clause.

\begin{lemma}\label{simpath} Suppose that $\gr$ is a countably infinite graph
and $c$ is an edge coloring. Suppose that $\{C_j:j<k\}$ is a finite family of
subsets of $V$ and that each $C_j$ is $\lkd$ in some color
$i_j$. Moreover, for $j<k$ let $A_j \subseteq C_j$ be arbitrary subsets. 

Then we can find disjoint vertex sets $P_j$ so that 
\begin{enumerate}[$($a$)$]
\item $P_j$ is a path (either finite or one-way infinite) in color $i_j$
for all $j<k$,
\item if $A_j$ is infinite then so is $A_j\cap P_j$,
\item $\bigcup\{P_j:j<k\} \supset \bigcup\{C_j:j<k\}$.
 \end{enumerate}
Moreover, if $C_j$ is infinite then we can choose the first point of $P_j$ freely from $C_j$.
%
%
\end{lemma}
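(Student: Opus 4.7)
My plan is to build each $P_j$ as a nested union $P_j=\bigcup_{n<\omega}p_j^n$ of finite color-$i_j$ paths, where $p_j^{n+1}$ end-extends $p_j^n$, the family $\{p_j^n:j<k\}$ stays pairwise disjoint, and the endpoint of each $p_j^n$ lies in $C_j$. At stage $0$ I initialize $p_j^0=\langle v_j\rangle$, with $v_j$ the prescribed first vertex when $C_j$ is infinite, any vertex of $C_j$ when $C_j$ is finite and nonempty, and $p_j^0=\emptyset$ otherwise.

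I then fix an enumeration $\bigcup_{j<k}C_j=\{u_m:m<\omega\}$ (or a finite one, in the finite case) and, for each $j$ with $A_j$ infinite, an enumeration of $A_j$. I interleave two kinds of tasks into a single schedule $(T_n)_{n<\omega}$: for every $m$ a covering task ``$u_m$ should belong to some $p_{j'}$'', and for each $j$ with $A_j$ infinite, infinitely many growth tasks ``$|A_j\cap p_j|$ should increase''.

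At stage $n$ I handle $T_n$ by picking an index $j$ and a target vertex $w\in C_j$. For the covering task of $u_m$, I skip if $u_m$ already lies in some $p_{j'}^n$; otherwise I pick $j$ with $u_m\in C_j$ and set $w=u_m$. For a growth task of $A_j$, I pick $w\in A_j\setminus\bigcup_{j'<k}p_{j'}^n$, which exists because $A_j$ is infinite while $\bigcup_{j'<k}p_{j'}^n$ is finite. Writing $v$ for the endpoint of $p_j^n$, both $v$ and $w$ lie in $C_j$ and $v\neq w$; since $C_j$ is infinitely linked in color $i_j$, the remark following the definition of infinitely linked supplies a color-$i_j$ path $\pi$ from $v$ to $w$ whose interior avoids the finite set $\bigcup_{j'<k}p_{j'}^n\setminus\{v\}$. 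I take $p_j^{n+1}$ to be $p_j^n$ followed by the non-initial portion of $\pi$, and leave all other $p_{j'}$'s unchanged.

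The unions $P_j=\bigcup_n p_j^n$ then satisfy $(a)$ by construction, have $v_j$ as first vertex whenever $C_j$ is infinite, and remain pairwise disjoint throughout. Property $(c)$ holds because every $u_m$ is covered at worst at its own stage, and $(b)$ because the growth tasks for $A_j$ are executed $\omega$ many times. The main thing to watch is the bookkeeping at the extension step: I must ensure the target is not already in any path and is distinct from the current endpoint, so that the infinitely-linked property can be invoked against a genuinely finite set of forbidden vertices; granted that, the induction runs through automatically.
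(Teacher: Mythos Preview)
Your proposal is correct and follows essentially the same approach as the paper: build each $P_j$ as an increasing union of finite color-$i_j$ paths whose last point stays in $C_j$, alternately extending to absorb the next uncovered vertex of $\bigcup_j C_j$ and to pick up a fresh point of $A_j$, using the infinitely-linked hypothesis to route around the finitely many vertices already used. The only difference is cosmetic bookkeeping---the paper schedules the two kinds of extensions at odd and even stages rather than via a general interleaved task list.
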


\begin{proof}
Let $v_0, v_1, \dots$ be a (possibly finite) enumeration of $\bigcup\{C_j:j<k\}$.

 For all the infinite $C_j$, fix distinct $x_j \in C_j$ as starting points for the $P_j$s. We define
disjoint finite paths $\{P_j^n:j<k\}$ by induction on $n \in \NN$ so that 
 \begin{enumerate}[(i)]
  \item $P_j^{n}$ is a path of color $i_j$ with first point $x_j$,
  \item $P_j^{n+1}$ end extends $P_j^{n}$ (as a path of color $i_j$),
\item the last point of the path $P_j^{n}$ is in $C_j$,
\item  if $A_j$ is infinite then the last point of $P_j^{2n}$ is in $A_j$,
 \end{enumerate}
for all $j<k$, and
 \begin{enumerate}[(i)]\setcounter{enumi}{4}
\item if $v_n \notin \bigcup_{j<k} P^{2n}_j$ and $v_n \in C_j$ then $v_n$ is the last point of $P_j^{2n+1}$.
 \end{enumerate}

 It should be easy to carry out this induction applying that each
$C_j$ is infinitely linked in color $i_j$.
Finally, we let $P_j=\cup \{P_j^{n}:n\in \NN\}$ for $j<k$ which finishes the proof.
\end{proof}

In particular, we have the following trivial corollary:

\begin{corollary}\label{trivpath} 
If a countable set of vertices $C$ is $\lkd$ then it is covered by a single one-way infinite path.
\end{corollary}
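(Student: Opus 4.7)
The plan is to apply Lemma \ref{simpath} directly, in the special case $k=1$ with a trivial coloring. First I equip $G$ with the constant $1$-edge coloring $c \equiv 0$. Under this coloring, a set is \emph{infinitely linked in color $0$} if and only if it is infinitely linked in $G$, since every path in $G$ automatically becomes monochromatic of color $0$. I then invoke Lemma \ref{simpath} with $C_0 = C$, $i_0 = 0$, and $A_0 = \emptyset$ (the latter choice makes condition (b) hold vacuously).

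The lemma produces a single path $P_0$ of color $0$, equivalently, a path in $G$, with $V(P_0) \supseteq C$. Assuming $C$ is countably infinite, the containment $V(P_0) \supseteq C$ forces $P_0$ itself to be infinite; and since, by our conventions, a path is either finite or one-way infinite, $P_0$ must be one-way infinite. This is exactly the desired covering.

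I do not anticipate any substantive obstacle: the corollary is essentially a one-line specialization of Lemma \ref{simpath}, obtained by collapsing the color palette to a single color. The only point requiring a moment's attention is verifying that the reduction to the trivially-colored setting is legitimate, which follows immediately from the definitions. (If $|C|\le 1$ the statement is either vacuous or trivial once $G$ has a one-way infinite path through the lone vertex, and for other finite $C$ one may end-extend the finite path supplied by the lemma using the infinite linking of $C$.)
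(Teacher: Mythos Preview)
Your proposal is correct and matches the paper's approach exactly: the paper simply records this as ``the following trivial corollary'' of Lemma~\ref{simpath} without further argument, and your specialization to $k=1$ with the constant coloring and $A_0=\emptyset$ is precisely the intended reading. Your parenthetical handling of the finite case is a bit more careful than the paper itself bothers to be.
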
  

More importantly, the above lemmas yield

\begin{theorem}[R. Rado \cite{R}]\label{rado} For every $r$-edge coloring of
$K_\NN$ we can partition the vertices into $r$
disjoint paths of distinct colors.
\end{theorem}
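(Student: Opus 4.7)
The plan is to combine the two lemmas essentially verbatim. First I would apply Lemma \ref{uftrick} to the coloring $c$ on $K_\NN$ to obtain a partition $V(K_\NN)=\bigcup_{i<r} V_i$ with the property that each $V_i$ is $\lkd$ in color $i$. (The stronger conclusion about $V_{i_c}$ being infinitely connected is not needed here; we only use the ``infinitely linked'' half.) This already reduces the theorem to a covering problem: we must find pairwise disjoint paths $P_0,\dots,P_{r-1}$ with $P_j$ of color $j$ whose union is all of $V$.

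Next I would invoke Lemma \ref{simpath} with $k=r$, $C_j=V_j$, $i_j=j$, and $A_j=\emptyset$ (so the ``$A_j$ infinite'' clause is vacuous and plays no role). The lemma produces pairwise disjoint vertex sets $P_0,\dots,P_{r-1}$ such that $P_j$ is a path in color $j$ and
\[
\bigcup_{j<r} P_j \;\supseteq\; \bigcup_{j<r} C_j \;=\; \bigcup_{j<r} V_j \;=\; V.
\]
Since the $P_j$'s are subsets of $V$ and cover $V$, they in fact partition $V$, yielding the desired decomposition into $r$ monochromatic paths of distinct colors.

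There is essentially no obstacle left at this point, because the two preceding lemmas already do all of the combinatorial work: the ultrafilter argument provides a color-coherent partition of the vertex set, and the inductive construction of \ref{simpath} turns each infinitely linked piece into a single path while absorbing any vertex that would otherwise be left behind. If anything, the only point that deserves a sentence of care is the observation that the paths produced by \ref{simpath} may use vertices from any of the $V_i$'s (the infinite-linkedness of $V_j$ is a property relative to all of $V$, not just $V_j$), but since the union is forced to contain all of $V$ and the paths are disjoint, this causes no difficulty and actually explains why no vertices are ``wasted''. Some of the $P_j$ may be empty or singletons, which the conventions set up earlier in Section \ref{notsec} explicitly allow.
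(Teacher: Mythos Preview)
Your proof is correct and follows exactly the same route as the paper: apply Lemma~\ref{uftrick} to get the partition $\{V_i : i<r\}$ with each $V_i$ infinitely linked in color $i$, then feed $C_j=V_j$, $A_j=\emptyset$ into Lemma~\ref{simpath}. The additional remarks you make (that the paths may borrow vertices from other $V_i$'s, that some $P_j$ may be empty) are accurate but not needed for the argument.
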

\begin{proof} Apply Lemma \ref{uftrick} and find a partition $\NN = \{V_i:i < r\}$ so that each $V_i$ is $\lkd$ in color $i$. Now apply Lemma
\ref{simpath} with  $C_i = V_i$ (and $A_i = \emptyset$) to get the desired partition into monochromatic paths.
\end{proof}

To abbreviate the formulation of certain result we introduce the following notation.

\begin{definition}\label{df:arrow}
Let $G$ be a graph and
$\mf F$ be a class of graphs.
We write 
\begin{equation}\label{eq:arrow}
\Covrel{G}{\mf F}{r,m} 
\end{equation}
if given any $r$-edge coloring $c:E(G)\to \{0, \dots, r-1 \}$   the vertex set of $G$ 
can be partitioned into $m$ monochromatic elements of $\mf F$.

We write 
\begin{equation}\label{eq:arrow2new}
\Covrel{G}{\mf F,\mf F,\dots,\mf F}{r} 
\end{equation}
if given any $r$-edge coloring $c:E(G)\to \{0, \dots, r-1 \}$   the vertex set of $G$ 
can be partitioned into $r$ monochromatic elements of $\mf F$
 in \emph{distinct colors}.

In particular,   $\Covrel{G}{\mf {Path}}{r,m}$ holds if 
given any $r$-edge coloring $c$ of $G$ the vertex set of $G$ 
can be partitioned into $m$ monochromatic paths.

We write $\sqsubset^*$ instead of $\sqsubset$ if we can partition the vertex set \emph{apart 
from a finite set}.
\end{definition}

Using our new notation,  Theorem \ref{rado}
can be formulated as follows:
\begin{equation}
 \Covrel{K_{\NN}}{\mf{Path},\dots ,\mf{Path}}{r}.
\end{equation}

\section{Partitions of hypergraphs}\label{sc:hyper}

In this section, we briefly look at a generalization of Rado's result, Theorem
\ref{rado} above, to hypergraphs. Let $k \in \NN \setminus \{0\}$.

\begin{definition}
  A {\em loose path} in a 
$k$-uniform hypergraph is a finite or one-way infinite sequence of edges, $e_1,e_2,\dots$ such that
$|e_i\cap e_{i+1}|=1$ for all $i$, and $e_i\cap e_j=\empt$ for all $i, j$ with $i+1< j$.

A {\em tight  path} in a 
$k$-uniform hypergraph is a finite or one-way infinite sequence of distinct vertices
such that every set of $k$ consecutive vertices forms an edge.

\end{definition}

\noindent\emph{Remark.} Occasionally, we will refer to loose and tight \emph{cycles} and \emph{two-way infinite paths} as well, with the obvious analogous definitions.

 The following result was proved recently:

\begin{theorem}[A. Gy\'arf\'as, G. N. S\'ark\"ozy {\cite[Theorem 3.]{GyS1}}]
  Suppose that the edges of a countably infinite complete $k$-uniform hypergraph
are colored with $r$  colors. Then the vertex set can be
partitioned into monochromatic
finite or one-way infinite loose paths of distinct colors.

\end{theorem}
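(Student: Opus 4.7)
The plan is to generalise the proof of Theorem \ref{rado} by producing $k$-uniform analogues of Lemmas \ref{uftrick} and \ref{simpath} tailored to loose paths; the colour partition will come from an iterated ultrafilter on $V$, and a loose-path analogue of Lemma \ref{simpath} will then assemble the required partition.

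First I would fix a non-principal ultrafilter $U$ on $V$ and define the colour of each vertex by iterated ultrafilter quantification: put $d(v) = i$ iff $(Uy_1)(Uy_2)\cdots(Uy_{k-1})\bigl[c(\{v,y_1,\ldots,y_{k-1}\}) = i\bigr]$, where $(Uy)\,\varphi(y)$ abbreviates $\{y : \varphi(y)\} \in U$. A routine induction on $k-1$, together with the fact that the $r$ colour classes partition all tuples of distinct coordinates from $V\setminus\{v\}$, shows that exactly one $i$ satisfies this condition for each $v$; this yields a partition $V = V_0 \sqcup \cdots \sqcup V_{r-1}$.

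The key property to verify is then that each $V_i$ is \emph{loosely linked} in colour $i$: for any two vertices $v,w \in V_i$ and any finite $F \subset V$ there is a loose colour-$i$ path of length two from $v$ to $w$ avoiding $F$. The sets $A_v = \{y : (Uy_2)\cdots(Uy_{k-1})[c(\{v,y,y_2,\ldots,y_{k-1}\}) = i]\}$ and the analogously defined $A_w$ both belong to $U$, so $A_v \cap A_w \in U$ contains some $u_1 \notin F \cup \{v,w\}$. Fixing this $u_1$, two further rounds of iterated ultrafilter quantification produce interior tuples $(y_2,\ldots,y_{k-1})$ and $(z_2,\ldots,z_{k-1})$ which are disjoint from each other and from $F \cup \{v,w,u_1\}$, such that $e_1 = \{v,u_1,y_2,\ldots,y_{k-1}\}$ and $e_2 = \{w,u_1,z_2,\ldots,z_{k-1}\}$ are both colour $i$ and meet in the single vertex $u_1$; these give the required loose path.

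With loose linking in hand, a loose-path version of Lemma \ref{simpath} is obtained by essentially the same bookkeeping: enumerate $V = v_0,v_1,\ldots$, fix a starting vertex in each infinite $V_j$, and inductively maintain finite colour-$j$ loose paths $P_j^n$ whose endpoints lie in $V_j$; at stage $n$, if $v_n \in V_j$ has not yet been absorbed, use loose linking to append a finite colour-$j$ loose segment from the current endpoint of $P_j^n$ to $v_n$, avoiding the finitely many vertices currently used by any $P_\ell^n$. The unions $P_0,\ldots,P_{r-1}$ are then pairwise disjoint monochromatic loose paths of distinct colours covering $\bigcup_j V_j = V$, completing the partition.

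The main obstacle is the loose-linking step: one must check that iterated ultrafilter quantification can simultaneously avoid the prescribed finite set and produce pairwise distinct interior vertices, and that the two resulting edges meet in exactly one vertex rather than several. Both issues are resolved by non-principality of $U$, which renders every finite set null and so keeps each ultrafilter-large family large after intersection with the cofinite complement of any previously chosen vertices. A minor subtlety, directly analogous to the graph case, is that the inserted colour-$j$ loose segment may legitimately consume interior vertices originally assigned to some $V_\ell$ with $\ell \neq j$; this is harmless because the goal is to partition $V$ into disjoint monochromatic loose paths, not to have each $P_j$ contain its own $V_j$.
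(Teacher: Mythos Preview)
Your argument is correct, but note that the paper does not itself prove this statement: it is merely cited as a result of Gy\'arf\'as and S\'ark\"ozy and then superseded by the paper's own Theorem~\ref{tm:tight_path}(1) on \emph{tight} paths (from any monochromatic tight path one extracts a loose path on the same vertex set by retaining only every $(k-1)^{\text{st}}$ edge). So the relevant comparison is with the proof of Theorem~\ref{tm:tight_path}(1).

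The two arguments take genuinely different routes. You lift the ultrafilter machinery of Lemma~\ref{uftrick} to $k$-uniform hypergraphs via iterated ultrafilter quantifiers, obtain a colour partition $V=\bigsqcup_{i<r}V_i$ together with a ``loose-linking'' property, and then run the greedy absorption of Lemma~\ref{simpath} colour by colour. This is a clean and natural extension of the paper's treatment of the graph case. By contrast, the paper's tight-path proof abandons the ultrafilter framework entirely and instead generalises Rado's original ``perfect set of colours'' argument, invoking Ramsey's theorem at each extension step to maintain condition~(b) there. That invariant is strictly stronger than your loose-linking condition --- it controls the colour of every edge formed by a \emph{suffix} of the current path together with generic vertices from $A$ --- and this is precisely what is needed to build tight rather than loose paths. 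Your iterated-ultrafilter method would not obviously upgrade to tight paths, since it only controls edges containing one prescribed vertex and $k-1$ ultrafilter-generic vertices, whereas extending a tight path requires prescribing $k-1$ vertices at once. So each approach buys something: yours is the more direct generalisation of the paper's own Lemmas~\ref{uftrick}--\ref{simpath}, while the paper's Ramsey-based argument yields the stronger tight-path conclusion.
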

 
In the introduction of \cite{GyS1}, the authors asked if one can find a partition into tight paths instead of loose ones.  We prove the following:

\begin{theorem}\label{tm:tight_path}
Suppose that the edges of a countably infinite complete $k$-uniform hypergraph
are colored with $r$  colors. Then 

(1) the vertex set can be partitioned into monochromatic
finite or one-way infinite  tight paths of distinct colors,

(2) the vertex set can be partitioned into monochromatic
tight cycles and two-way infinite  tight paths of distinct colors.
\end{theorem}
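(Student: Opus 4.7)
The plan is to extend the proof of Rado's Theorem \ref{rado} from graphs to $k$-uniform hypergraphs, with tight paths replacing ordinary paths. For part (1) I would follow the two-step schema of Lemmas \ref{uftrick} and \ref{simpath}: first produce a partition $V = V_0 \cup \dots \cup V_{r-1}$ equipped with a strong ``tight-linkedness'' property in each color, then cover each $V_i$ by a single tight path in color $i$. The partition is obtained from an iterated ultrafilter: fix a non-principal ultrafilter $U$ on $V$ and assign to every $j$-tuple $T$ with $j \leq k$ a type $t(T) \in \{0, \dots, r-1\}$ by downward recursion on $j$, setting $t(T) = c(T)$ for $|T| = k$ and, for $|T| < k$, letting $t(T)$ be the unique $i$ such that $\{w : t(T \cup \{w\}) = i\} \in U$. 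Set $V_i = \{v : t(\{v\}) = i\}$. The key property is that for any tuple $T$ of type $i$ with $|T| < k$, the set of ``type-preserving'' extensions lies in $U$, and this remains true under intersection with finitely many other $U$-large constraints.

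The next step is a hypergraph analogue of Lemma \ref{simpath}: given any finite tight path $P$ in color $i$ whose vertices lie in $V_i$ and whose final $k-1$ vertices form a tuple of type $i$, together with a target vertex $u \in V_i$ and a finite avoidance set $F$, there is an extension of $P$ to a finite tight path in color $i$ inside $V_i$ that avoids $F$, ends at $u$, and has a final $k-1$ vertex tail of type $i$. This is proved by vertex-by-vertex extension: the next vertex must simultaneously lie in $V_i$, make the new $k$-edge have color $i$, maintain the type-$i$ invariant for the shifting tail, and avoid $F$; each of these conditions cuts out a set in $U$, so the intersection is infinite. The target $u$ is absorbed by a short ``matching'' phase on the final $k-1$ positions.

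With this tight-linkedness lemma in hand, I would mimic the bookkeeping of Lemma \ref{simpath}: enumerate $V$ and build the tight paths $P_i \subseteq V_i$ by a back-and-forth recursion absorbing one new vertex at each stage, producing the required partition into monochromatic tight paths of distinct colors. For part (2), I would modify this by extending each infinite $P_i$ in both directions to obtain a two-way infinite tight path, and closing each finite $P_i$ into a tight cycle; the closing of a cycle uses the same tight-linkedness to produce a terminating bridge compatible with the initial $k-1$ vertices of the path.

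The main obstacle is the type invariant on the tail of a tight path. In the graph case ($k = 2$) the tail is a single vertex whose type is automatically $i$ once it lies in $V_i$, so no additional bookkeeping is needed; for $k > 2$ the invariant is genuinely nontrivial, coupling consecutive choices through the shifting tail. Establishing simultaneously that new $k$-edges have color $i$, that each intermediate tail tuple has type $i$, that vertices remain in $V_i$, and that prescribed targets are absorbed requires careful layering of the iterated-ultrafilter property; part (2) adds the further complication of closing a tight cycle, which imposes a compatibility condition at both ends of the cycle's closing bridge.
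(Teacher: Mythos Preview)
Your approach is genuinely different from the paper's, but it contains a real gap. The crucial claim is that when extending a tight path in color $i$, the condition ``the next vertex lies in $V_i$'' cuts out a set in $U$. This is false: the sets $V_0,\dots,V_{r-1}$ partition $V$, so exactly one of them (namely $V_{t(\emptyset)}$) lies in $U$, and for every other color $i$ the set $V_i$ is $U$-small. Hence you cannot in general keep the extension inside $V_i$, and the ``tight-linkedness inside $V_i$'' lemma fails as stated. A second, related issue: the invariant ``the final $k-1$ vertices form a tuple of type $i$'' is not self-propagating. To ensure that the \emph{new} final $(k-1)$-tuple has type $i$ after one step you need the old final $(k-2)$-tuple to have type $i$, and so on down to the singleton---which again forces the last vertex into $V_i$ and brings you back to the first problem.

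The paper avoids this entirely by dispensing with a fixed ultrafilter and instead maintaining, via Ramsey's theorem, a shrinking infinite ``reservoir'' $A$ with the much stronger property that for each path $P_t$, \emph{any} completion of its last $i$ vertices by $k-i$ vertices from $A$ yields color $t$ (for all $1\le i<k$). This homogeneity is exactly what makes the absorption step go through without tracking tail types: to absorb a new vertex $v$, apply Ramsey to the induced $(k-1)$-coloring $x\mapsto c(x\cup\{v\})$ on $A$, find a homogeneous $B\subseteq A$ in some color $t'$, note that maximality of the perfect set forces $t'\in T$, and prepend $k-1$ vertices of $B$ before $v$. Your ultrafilter scheme can in fact be repaired along similar lines---allow the bridge vertices to be $U$-generic (hence in $V_{t(\emptyset)}$, not $V_i$) and maintain the type invariant on \emph{all} terminal $j$-tuples for $1\le j\le k-1$, choosing the bridges with the target $u\in V_i$ already in view so that the conditions $t(\{b,u\})=i$, etc., are imposed from the $u$ side---but this is substantially more bookkeeping than you indicated, and the resulting paths do not stay inside the $V_i$'s.
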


\begin{proof}
 (1) Note that the case of $k=2$ is Rado's Theorem \ref{rado} above; we will imitate his original proof here. 

Let $c:\br {\NN};k;\to \{0, \dots, r-1 \}$.  A set $T\subs \{0, \dots, r-1 \}$ of colors is called {\em perfect} 
iff there are disjoint finite  subsets $\{P_t:t\in T\}$ of ${\NN}$ and 
an infinite set $A\subs \NN \setminus \bigcup_{t \in T} P_t$ such that for all $t\in T$
\begin{enumerate}[(a)]
 \item $P_t$ is a tight path in color $t$,
\item if $1\le i<k$ and $x$ is the set of the last $i$ vertices from the tight path $P_t$ and 
$y\in \br A;k-i;$, then $c(x\cup y)=t$.
\end{enumerate}
Since $\empt$ is perfect, we can consider a perfect set $T$ of colors with
maximal number of elements.   

\begin{claim}
If the vertex disjoint finite tight paths $\{P_t:t\in T\}$  and the infinite set 
$A$ satisfy (a) and (b) then for all $v\in {\NN}\setm \bigcup_{t\in T}P_t$ 
there is a color $t'\in T$, a finite sequence $v_1,v_2,\dots, v_{k-1}$ from
$A$,
and an infinite set $A'\subs A$ such that the tight paths
\begin{equation}
 \bigl\{P_t:t\in T\setm \{t'\}\bigr\}\cup \{P_{t'}\hspace{0.005 cm}^\frown (v_1, v_2,\dots, v_{k-1},v)\}
\end{equation}
and $A'$   satisfy (a) and (b) as well.
\end{claim}
\begin{proof}[Proof of the Claim]
Define a new coloring $d:\br A;k-1;\to \{0, \dots, r-1 \}$ by the formula $d(x)=c(x\cup\{v\})$.
By Ramsey's Theorem, there is an infinite $d$-homogeneous set $B\subs A$ in some
color  $t'$. Then $t'\in T$, since otherwise $T\cup \{t'\}$ would be a bigger perfect
set witnessed by $P_{t'}=\{v\}$,$\{P_t:t\in T\}$  and $B$.

Now pick distinct $v_1,v_2, \dots, v_{k-1}$ from $B$ and let $A'=B\setm \{v_1, \dots,
v_{k-1},v\}$.  
\end{proof}

Finally, by applying the claim repeatedly, we can cover the vertices with $|T|$ tight paths of distinct colors. 

\medskip
\noindent 
(2) 
Let $c:\br {\NN};k;\to \{0, \dots, r-1 \}$. 
Write $V_{-1}={\NN}$.
Using Ramsey's Theorem, by induction on $n \in \NN$ choose 
$d(n) < r$ and $V_n\in  \br V_{n-1};{\NN};$ such that 
\begin{equation}
  c(\{n\}\cup O)=d(n)\text{ for all $O\in \br V_n;k-1;$.}
\end{equation}
For $i < r$ let 
\begin{equation}
A_i=\{n\in \NN:  d(n)=i\}. 
\end{equation}

Let $K=\{i < r : A_i\text{ is finite}\}$.
By induction on $i\in K$ we will define tight cycles $\{P_i : i \in K\}$ such that 
$$\ds{\bigcup_{i'<i, i' \in K} A_{i'}\subseteq \bigcup_{i'<i, i' \in K} P_{i'}}$$ while some of the $P_i$'s might be empty.

Assume that $\{P_{i'}:i'<i, i' \in K\}$ is defined and suppose $i\in K$. 
Enumerate $A_i\setm \bigcup_{i'<i, i' \in K} P_{i'}$ as 
$\{x_i^{j}:j<t\}$.


Choose disjoint $k-1$ element sets
\begin{equation}
 Y^j_i\subseteq 
\bigcap_{j<t}V_{x_i^{j}} \setminus \bigcup_{i'<i, i' \in K} P_{i'} \text{ for } j<t.
\end{equation}

Consider an ordering $\prec_i$  on  $\ds{P_{i}=\{x^j_i:j<t\}\cup\bigcup_{j<t}Y^j_i}$
such that 
\begin{equation}
 x^0_i\prec_i Y^0_i\prec_i x^1_i\prec_i Y^1_i\prec_i \dots \prec_i x_i^{t-1}\prec_i Y_i^{t-1}.
\end{equation}

Then $\prec_i$ witnesses that $P_i$ is a tight cycle in color $i$.

Now, let 
\begin{equation}
  P=\bigcup_{i\in K} P_i
\end{equation}

and 
for each $i\in \{0, \dots, r-1 \}\setm K$  we define  a 2-way infinite tight path $P_i$ as follows.

By induction,  for every integer $z\in \mbb Z$ and $i \in \{0, \dots, r-1 \}\setm K$ choose disjoint 
sets 
 $\{x^z_i\}\in [A_{i}\setm P]^1$ and $Y^z_i\in \br {\NN}\setm P;k-1;$
such that
\begin{equation}
 Y^z_i\subs  V_{x^z_i}\cap V_{x_i^{z+1}} 
\end{equation}
and 
\begin{equation}
\bigcup_{i\in \{0, \dots, r-1 \}\setm K}  A_i\subs  P\cup  \bigcup \{\{x^z_i\}, Y^z_i:
i\in \{0, \dots, r-1 \}\setm K, z\in \mbb Z\}. 
\end{equation}

Consider an ordering $\prec_i$  on  $P_{i}=\{x^z_i:z\in \mbb Z\}\cup\bigcup_{z\in \mbb Z}Y^z_i$
such that 
\begin{equation}
\ldots \prec_i Y^{-2}_i \prec_i x^{-1}_i \prec_i Y^{-1}_i   \prec_i x^0_i\prec_i Y^0_i\prec_i x^1_i\prec_i Y^1_i\prec_i 
\ldots
\end{equation}

Then $\prec_i$ witnesses that $P_i$ is a $2$-way infinite tight path in color $i$.
\end{proof}

\section{Covers by $k^{th}$ powers of paths}\label{sc:power}

Our aim is to prove a stronger version of Rado's theorem; in order to state
this result we need the following

\begin{definition}
 Suppose that $\gr$ is a graph and $k \in \NN \setminus\{0\}$. The {\em $k^{th}$ power of $G$} is the
graph $G^k=(V,E^k)$ where $\{v,w\}\in E^k$ iff there is a finite path of length
$\leq k$ from $v$ to $w$.
\end{definition}

We will be interested in partitioning an edge colored copy of $K_\NN$ into
finitely many monochromatic \emph{$k^{th}$ powers of paths}. 

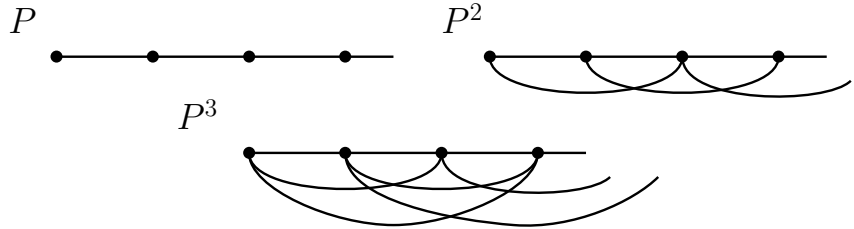
\begin{figure}[H]%
\centering
\psscalebox{0.8 0.8} 
{
\begin{pspicture}(1,-5.8085423)(14.014142,-2)
\psline[linecolor=black, linewidth=0.04](0.8,-2.9114575)(6.4,-2.9114575)
\psdots[linecolor=black, dotsize=0.2](0.8,-2.9114575)
\psdots[linecolor=black, dotsize=0.2](2.4,-2.9114575)
\psdots[linecolor=black, dotsize=0.2](4.0,-2.9114575)
\psdots[linecolor=black, dotsize=0.2](5.6,-2.9114575)
\psdots[linecolor=black, dotsize=0.2](9.6,-2.9114575)
\psdots[linecolor=black, dotsize=0.2](11.2,-2.9114575)
\psdots[linecolor=black, dotsize=0.2](12.8,-2.9114575)
\psline[linecolor=black, linewidth=0.04](8.0,-2.9114575)(13.6,-2.9114575)
\psdots[linecolor=black, dotsize=0.2](8.0,-2.9114575)
\psbezier[linecolor=black, linewidth=0.04](8.0,-2.9114575)(8.0,-3.7114575)(11.2,-3.7114575)(11.2,-2.9114575)
\psbezier[linecolor=black, linewidth=0.04](9.6,-2.9114575)(9.6,-3.7114575)(12.8,-3.7114575)(12.8,-2.9114575)
\psbezier[linecolor=black, linewidth=0.04](11.2,-2.9114575)(11.2,-3.7114575)(13.6,-3.7114575)(14.0,-3.3114576)
\psdots[linecolor=black, dotsize=0.2](5.6,-4.5114574)
\psdots[linecolor=black, dotsize=0.2](7.2,-4.5114574)
\psdots[linecolor=black, dotsize=0.2](8.8,-4.5114574)
\psline[linecolor=black, linewidth=0.04](4.0,-4.5114574)(9.6,-4.5114574)
\psdots[linecolor=black, dotsize=0.2](4.0,-4.5114574)
\psbezier[linecolor=black, linewidth=0.04](4.0,-4.5114574)(4.0,-5.3114576)(7.2,-5.3114576)(7.2,-4.5114574)
\psbezier[linecolor=black, linewidth=0.04](5.6,-4.5114574)(5.6,-5.3114576)(8.8,-5.3114576)(8.8,-4.5114574)
\psbezier[linecolor=black, linewidth=0.04](7.2,-4.5114574)(7.2,-5.3114576)(9.6,-5.3114576)(10.0,-4.9114575)
\psbezier[linecolor=black, linewidth=0.04](4.0,-4.5114574)(4.0,-5.1114573)(5.4,-5.7114577)(6.4,-5.7114577)(7.4,-5.7114577)(8.8,-5.1114573)(8.8,-4.5114574)
\psbezier[linecolor=black, linewidth=0.04](5.6,-4.5114574)(5.6,-5.3114576)(7.4029455,-5.634761)(8.4,-5.7114577)(9.397055,-5.788154)(10.4,-5.3114576)(10.8,-4.9114575)
\rput[bl](0.0,-2.5114574){\Large $P$}
\rput[bl](7.2,-2.5114574){\Large $P^2$}
\rput[bl](2.8,-4.1114573){\Large $P^3$}
\end{pspicture}
}

\caption{Powers of paths.}%

\end{figure}

We will investigate this problem by
introducing the following game.

\begin{definition}\label{def:games}
Assume that $H$ is a graph, $W\subs V(H)$ and $\ekk\in {\NN}$. 
The game $\gm HW\ekk$ is played by two players, Adam and Bob,   as follows.
The  players choose disjoint finite subsets of $V(H)$ alternately:
\begin{displaymath}
 A_0, B_0, A_1, B_1, \dots
\end{displaymath}
Bob wins the game $\gm HW\ekk$ iff 
\begin{enumerate}[(A)]
 \item $W\subs \bigcup_{i\in {\NN}}A_i\cup B_i$, and  
\item $H[\bigcup_{i\in {\NN}}B_i]$ contains the  $\ekk^{th}$
 power of a (finite or one way infinite) Hamiltonian path (that is, a path covering
all the vertices).
\end{enumerate}
\end{definition}

For $k=1$, we have the following

\begin{obs}\label{winnerpath} If $H=(V,E)$ is a countable graph and $W\subs V$
then the following are equivalent:
\begin{enumerate}
\item $W$ is $\lkd$,
\item Bob wins $\gm HW 1$.
\end{enumerate}
\end{obs}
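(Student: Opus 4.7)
The plan is to prove the two directions separately, with (2)$\Rightarrow$(1) being a one-line reduction that uses Adam's first move as the obstruction $F$, while (1)$\Rightarrow$(2) requires a bookkeeping strategy that repeatedly invokes infinite linkedness to splice new vertices of $W$ onto a growing finite path whose endpoint is kept inside $W$. Trivial cases $|W| \leq 1$ (where the equivalence is vacuous) are handled separately, so assume $|W| \geq 2$ below.

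For (2)$\Rightarrow$(1), fix a winning strategy $\sigma$ for Bob, pick distinct $v,w \in W$ and a finite $F \subseteq V \setminus \{v,w\}$, and simulate a play in which Adam opens with $A_0 = F$ and then plays $A_n = \emptyset$ for all $n\geq 1$. Let $B^\ast = \bigcup_n B_n$ be Bob's union of responses. Since Bob wins, $B^\ast \cap F = \emptyset$, $W \subseteq F \cup B^\ast$, and $H[B^\ast]$ carries a Hamiltonian path $P$. Since $v,w \notin F$ we have $v,w\in B^\ast$, so the sub-segment of $P$ between $v$ and $w$ is a path in $H$ from $v$ to $w$ that lies in $B^\ast$ and therefore avoids $F$.

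For (1)$\Rightarrow$(2), assume $W$ is infinitely linked and enumerate $W = \{w_0, w_1, \ldots\}$. Bob's strategy will maintain, after round $n$, a finite path $P_n$ in $H$ such that $V(P_n) = \bigcup_{i\leq n} B_i$ and the last vertex $u_n$ of $P_n$ belongs to $W$. In round $n$ Bob looks at Adam's move $A_n$, selects the least index $m$ with $w_m \notin V(P_{n-1}) \cup \bigcup_{i\leq n} A_i$, and uses infinite linkedness of $W$ to produce a finite path $Q_n$ from $u_{n-1}$ to $w_m$ avoiding the finite set $(V(P_{n-1})\setminus\{u_{n-1}\}) \cup \bigcup_{i\leq n} A_i$. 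He then sets $B_n := V(Q_n)\setminus V(P_{n-1})$ and declares $P_n := P_{n-1} \smf Q_n$; in round $0$ he simply plays $B_0 = \{w_m\}$ with $m$ chosen least with $w_m\notin A_0$. Round $n$ also handles the easy case where no such $m$ exists by playing $B_n = \emptyset$.

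The $P_n$'s concatenate to a finite or one-way infinite path whose vertex set is $\bigcup_n B_n$, so condition (B) of the game is met. For (A), observe that by the minimality rule, after round $n$ every $w_i$ with $i\leq n$ is either in some $A_j$ with $j\leq n$ or in $V(P_n)$, so $W \subseteq \bigcup_i A_i \cup B_i$. The only delicate point, and the reason the strategy is stated in terms of $W$ rather than arbitrary uncovered vertices, is maintaining the invariant $u_n \in W$: this is exactly what lets the next round invoke infinite linkedness, which requires both endpoints to lie in $W$ and only tolerates a finite avoidance set — and the set we need to avoid is finite because only finitely many moves have been played so far.
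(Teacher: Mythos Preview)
Your proof is correct and follows the same approach as the paper: for (2)$\Rightarrow$(1) you have Adam open with $A_0=F$ and then pass, exactly as the paper does, and for (1)$\Rightarrow$(2) you spell out in detail the strategy the paper merely sketches (``Bob can always connect an uncovered point of $W$ to the end-point of the previously constructed path while avoiding vertices played so far''), making explicit the invariant that the current endpoint stays in $W$. One tiny wording quibble: the $|W|\le 1$ case is not vacuous but rather both conditions hold trivially; your handling is fine in substance.
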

\begin{proof} 
(1) $\Rightarrow$ (2): By our assumption, Bob can always connect an uncovered
point of $W$ to the end-point of the previously constructed path while avoiding  vertices played so far.
This shows the existence of a winning strategy for Bob.

(2) $\Rightarrow$ (1): Fix any two distinct points $v,w\in W$ and a finite set $F\subs
V\setm \{v,w\}$. Let Adam start with $A_0=F$ and continue with $A_i=\emptyset$;
the Hamiltonian path $P$ constructed by Bob's strategy will go through $a$ and
$b$ while $P\cap F=\emptyset$.
\end{proof}

Now, we show how to produce a partition of the vertices into $k^{th}$ powers of
paths using winning strategies of Bob:

\begin{lemma}\label{parallel} Suppose that $H=(V,E)$, $V=\bigcup\{W_i:i<M\}$ with
$M\in \NN$ and let $H_i=(V, E_i)$ for some $E_i\subs E$. If Bob wins $\gm
{H_i} {W_i} \ekk$ for all $i<M$ then $V$ can be partitioned into $\{P_i:i<M\}$
so that $P_i$ is a $k^{th}$ power of a path in $H_i$.
\end{lemma}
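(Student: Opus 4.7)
The plan is to play all $M$ games in parallel using the given winning strategies, coordinating Adam's moves so that Bob's responses across the games partition $V$. Fix a winning strategy $\sigma_i$ for Bob in $\gm{H_i}{W_i}{k}$ for each $i<M$. We simulate the games concurrently in rounds $n\in\NN$; within each round the games are addressed in order $i=0,1,\dots,M-1$, and within each game Adam moves first and Bob responds via $\sigma_i$. This yields a canonical linear order on all individual moves.

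The key is how Adam plays. At Adam's turn in round $n$ of game $i$, we let $A_n^i$ be the set of all vertices that have appeared in some strictly earlier Bob-move $B_m^j$ with $j\neq i$ but do not already lie in $\bigcup_{n'<n}(A_{n'}^i\cup B_{n'}^i)$. Only finitely many moves have been made so far, so $A_n^i$ is finite, and by construction it is disjoint from game $i$'s prior history, making it a legal Adam move. Bob then plays $B_n^i$ according to $\sigma_i$; because $\sigma_i$ wins against every legal Adam play, conditions (A) and (B) from Definition \ref{def:games} will hold for the resulting game $i$ play.

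Set $P_i=\bigcup_{n\in\NN} B_n^i$. A straightforward induction along the linear order of Bob-moves shows the $P_i$ are pairwise disjoint: when $B_n^i$ is chosen, $\sigma_i$ forces it to be disjoint from $A_n^i$, and by definition $A_n^i$ contains every earlier $B_m^j$ with $j\neq i$ not already in game $i$'s prior history. Hence $B_n^i$ meets no previously played $B$-move from any game. Moreover $V=\bigcup_{i<M} P_i$: given $v\in V$, pick $i$ with $v\in W_i$; condition (A) yields $v\in A_m^i\cup B_m^i$ for some $m$; if $v\in B_m^i$ then $v\in P_i$, and if $v\in A_m^i\setm B_m^i$ then by the definition of Adam's move $v$ was previously played as some $B_{m'}^j$ with $j\neq i$, so $v\in P_j$. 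Finally, condition (B) gives that $H_i[P_i]$ contains a Hamiltonian $k^{\mathrm{th}}$ power of a path, so $P_i$ itself is a $k^{\mathrm{th}}$ power of a path in $H_i$, as required.

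The main delicate point is ensuring that Bob-moves across different games cannot collide; the ``catch-up'' definition of $A_n^i$ resolves this by forcing each $\sigma_i$ to steer clear of every vertex any other Bob has already claimed, while simultaneously guaranteeing that whenever a vertex is absorbed into some $A_m^i$ it is already safely placed in a Bob-chunk of a different game.
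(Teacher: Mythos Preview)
Your argument is correct and follows essentially the same approach as the paper: run the $M$ games in parallel in the lexicographic order on $\NN\times\{0,\dots,M-1\}$, and at Adam's turn in game $i$ feed in all Bob-moves from the other games made so far (minus what already lies in game $i$'s history), then take $P_i=\bigcup_n B_n^i$. The paper's definition of $A_n^i$ nominally includes the earlier $B_m^i$ as well, but these are immediately subtracted off with the prior history, so the two formulations coincide; your disjointness and covering arguments are likewise the same in substance as the paper's.
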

\begin{proof}
 We will conduct $M$ games simultaneously as follows: the plays of Adam and Bob
in the $i^{th}$ game will be denoted by $A^i_0,B^i_0, A^i_1,B^i_1,\dots$ for $i<M$.
Let $\sigma^i$ denote the winning strategy for Bob in $\gm {H_i} {W_i} \ekk$, that is, 
if we set $B^i_{n}=\sigma^i(A^i_0,B^i_0, \dots, A^i_n)$ then Bob wins the
game.
 
 Now, we define $A^i_n,B^i_n$ by induction using the lexicographical ordering 
$<_{lex}$
on
$\{(n,i):n\in\NN, i<M\}$. First, let $A^0_0=\emptyset$ and
$B^0_0=\sigma^0(A^0_0)$. In general, assume that 
$A^j_m$ and $B^j_m$ are defined for $(m,j)<_{lex}(n,i)$, 
and 
we let 
\begin{align}\label{eq:Ain}
A^i_n=\bigcup\{B^j_m:(m,j)<_{lex}(n,i)\}\setm \Big(\bigcup\{A^i_m,B^i_m:m<n\}\Big) 
\end{align}
and 
 $$B^i_n=\sigma^i(A^i_0,B^i_0, \dots, A^i_n).$$
 
 One easily checks that the above defined plays are valid; indeed, 
for a fix $i<M$
the finite
sets  $\{A^i_n,B^i_n:n\in \mbb N\}$  defined above are disjoint.
 
Next, let $P_i=\bigcup \{B^i_n:n\in
\NN\}$ for $i<M$. 
As Bob
wins the $i^{th}$ game we have that $P_i$ is a $k^{th}$ power of path in $H_i$.
Note that $P_i\cap P_j=\emptyset$ if $i\neq j<M$. Indeed, if $(m,j)<_{lex}(n,i)$, then 
\begin{align}
 B^i_n\cap B^j_m\subs B^i_n\cap (A^i_n\cup \Big(\bigcup\{A^i_m,B^i_m:m<n\}\Big)=\empt   
\end{align}
by (\ref{eq:Ain}).

To
finish the proof, we prove 
\begin{align}
V=\{P_i:i<M\}. 
\end{align}
%
 Indeed, first note that $W_i\subs \bigcup_{n\in {\NN}}A^i_n\cup B_n^i$ as
Bob wins the $i^{th}$ game and hence $$V= \bigcup_{n\in {\NN},i<M}A^i_n\cup
B_n^i.$$
 Second,  by (\ref{eq:Ain}), we have  
\begin{displaymath}
A^i_n\subs \bigcup\{B^j_m: (m,j)<_{lex}(n,i)\} 
\end{displaymath}
and so 
$$\bigcup_{n\in {\NN},i<M}A^i_n\subs \bigcup_{n\in
{\NN},i<M}B^i_n$$ and hence $V=\{P_i:i<M\}$.
\end{proof}

The next theorem provides conditions under which Bob has a winning strategy:

\begin{theorem}\label{tm:winnig}
Assume that $H$ is a countably infinite graph, $W\subs V(H)$ is non-empty and $\ekk\in {\NN}$.
If there are subsets $W_0,\dots, W_{\ekk}$ of $V(H)$ such that 
$W_0=W$ and 
\begin{equation}\label{eq:winnig}
W_{j+1}\cap N_H[F]\text{ is infinite for each } j<\ekk \text{ and finite } F\subs  
\bigcup\nolimits_{i\le j}W_i 
\end{equation}
then Bob wins $\gm HW\ekk$. 
\end{theorem}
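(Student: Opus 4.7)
The plan is to give Bob an explicit strategy that builds a one-way infinite $k^{th}$ power of a path whose vertices are organized into consecutive blocks of length $k+1$,
\begin{equation*}
\underbrace{v_0^{(0)} v_1^{(0)} \cdots v_k^{(0)}}_{B^{(0)}},\ \underbrace{v_0^{(1)} v_1^{(1)} \cdots v_k^{(1)}}_{B^{(1)}},\ \underbrace{v_0^{(2)} \cdots}_{B^{(2)}} \ldots ,
\end{equation*}
with the slot at position $(m,p)$ (for $0\le p\le k$) to be filled from $W_p$; in particular $v_0^{(m)}\in W=W_0$ for every $m$. A direct calculation of path distances shows that two distinct positions $(m,p)$ and $(m',p')$ lie at distance $\le k$ in this ordering exactly when $m=m'$, or $m'=m+1$ and $p'<p$, or $m'=m-1$ and $p'>p$; these are the pairs whose $H$-adjacency Bob must secure.

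The main obstacle is that the hypothesis only produces a vertex of $W_p$ adjacent to a prescribed finite set $F$ when $F\subseteq\bigcup_{i<p}W_i$, whereas a naive left-to-right placement would force $v_p^{(m)}$ to be adjacent to already-decided vertices $v_{p+1}^{(m-1)},\dots,v_k^{(m-1)}$ of the previous block, which the hypothesis does not control. The key trick is to fill the slots not in path order but on \emph{anti-diagonals}: in round $n$ Bob fills all positions $(m,p)$ with $m+p=n$ and $p\le k$, processing them in order of increasing $p$. A short inspection of the three adjacency cases above shows that, at the moment $(m,p)$ is filled, the already-decided positions within path-distance $\le k$ of $(m,p)$ are precisely the $(m,p')$ and $(m+1,p')$ with $p'<p$, all of which have types in $\bigcup_{i<p}W_i$. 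Hence the constraint set $F$ satisfies $F\subseteq\bigcup_{i<p}W_i$, so for $p\ge 1$ the hypothesis supplies an infinite set $W_p\cap N_H[F]$ from which Bob picks a fresh $v_p^{(m)}$ avoiding all vertices already played by either player. The adjacencies of $(m,p)$ to positions filled later, namely $(m,p')$ and $(m-1,p')$ with $p'>p$, are handled when those positions are filled, by the symmetric observation that $(m,p)$ then sits in their own constraint set.

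It remains to describe how the $W_0$-slots and the coverage condition (A) are handled. Fix an enumeration $W=\{u_0,u_1,\dots\}$ and, whenever Bob must fill some $v_0^{(m)}$, let him take the least-indexed $u_j$ that no player has played up to that moment; the anti-diagonal analysis gives an empty constraint set for $p=0$, so this choice is always admissible. The sequence of indices $j$ chosen in this way is strictly increasing, so it tends to infinity, and every $u_j\in W$ is either played by Adam or eventually selected by Bob, yielding condition (A). Condition (B) is then immediate: $\bigcup_n B_n$ is exactly the set of all slot-fillings, and every pair of slots at path-distance $\le k$ has been arranged to be an edge of $H$, so $H[\bigcup_n B_n]$ contains the $k^{th}$ power of the Hamiltonian path obtained by listing all slots in path order. (If $W$ is finite, Bob simply stops opening new blocks once $W$ is exhausted, and the resulting finite $k^{th}$ power of a path still fulfils both winning conditions.)
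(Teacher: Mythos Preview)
Your proof is correct and follows essentially the same approach as the paper: the paper's ordering $\trianglelefteq$ on $\mathbb{N}\times\{0,\dots,k\}$, defined by $(m,i)\trianglelefteq(n,j)$ iff $m+i<n+j$ or ($m+i=n+j$ and $i\le j$), is exactly your anti-diagonal order with ties broken by increasing $p$, and the adjacency analysis is identical. The only cosmetic differences are that the paper has Bob play a single vertex per round (rather than a whole anti-diagonal) and treats the finite-$W$ case by a separate, simpler construction instead of truncating the anti-diagonal scheme.
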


\begin{proof}
%
We can assume that $\vv H={\NN}$.

Consider first the easy case when  $W_0$ is finite.
Adam plays a finite set $A_0$ in the first round. 
Write $N=|W_0\setm A_0|$.
 Let Bob play $B_0=W_0\setm A_0=\{b_{n,0}:n<N\}$.
In the $j^{th}$ round for $1\le j\le k$, let Bob play 
 an $N$-element set 
\begin{align}\label{eq:fromWjfin}
  B_j=\{b_{n,j}:n<N\}\subs W_j\cap N_H\big[\bigcup\nolimits_{i<j}B_{i}\big] 
\end{align}
which avoids all previous choices, i.e. $B_j \cap \bigcup \{A_{i'},B_{i}:i'\leq j,i<j\}=\emptyset$.
 For $j>k$ let Bob play $B_j=\empt$.

We claim that 
\begin{enumerate}[(A)]
\item  $W_0\subseteq \bigcup\{A_n, B_n:n\in \mbb N \}$, and
 \item $P=\{b_{n,j}:n< N, j\le k\}$ is the $k^{th}$-power of a path.
\end{enumerate}

(A) is clear because $W_0\subseteq A_0\cup B_0$.

To check (B) consider the lexicographical order of the indexes. Let $(m,i)\neq (n,j)\in \{0, \dots, N-1\}\times \{0,\dots, k\}$. Then $b_{m,i}$ and  $b_{n,j}$ are  the  $((k+1)m+i)^{th}$ and  
$((k+1)n+j)^{th}$ elements, respectively, in the lexicographical order.

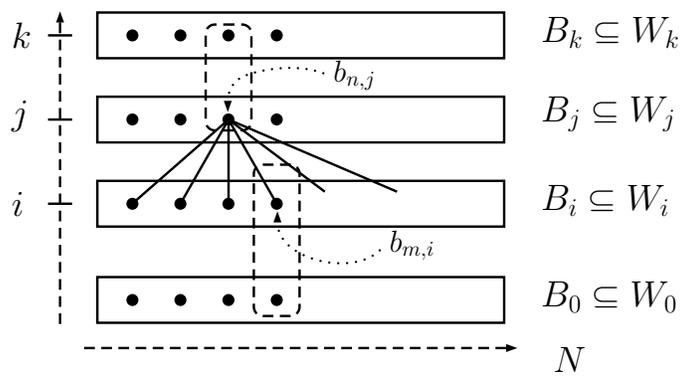
\begin{figure}[H]
\centering
\psscalebox{0.8 0.8} 
{
\begin{pspicture}(0,-3.0)(10.74,3.0)
\psline[linecolor=black, linewidth=0.04, linestyle=dashed, dash=0.17638889cm 0.10583334cm, arrowsize=0.05291666666666667cm 2.0,arrowlength=1.4,arrowinset=0.0]{->}(0.8,-2.2)(0.8,3.0)
\psline[linecolor=black, linewidth=0.04, linestyle=dashed, dash=0.17638889cm 0.10583334cm, arrowsize=0.05291666666666667cm 2.0,arrowlength=1.4,arrowinset=0.0]{->}(1.2,-2.6)(8.4,-2.6)
\psframe[linecolor=black, linewidth=0.04, dimen=outer](8.2,3.0)(1.4,2.2)
\psframe[linecolor=black, linewidth=0.04, dimen=outer](8.2,0.2)(1.4,-0.6)
\psframe[linecolor=black, linewidth=0.04, dimen=outer](8.2,1.6)(1.4,0.8)
\psframe[linecolor=black, linewidth=0.04, dimen=outer](8.2,-1.4)(1.4,-2.2)
\rput[bl](8.8,1.0){\Large $B_j\subseteq W_j$}
\rput[bl](8.8,-0.4){\Large $B_i\subseteq W_i$}
\rput[bl](8.8,-2.0){\Large $B_0\subseteq W_0$}
\rput[bl](8.8,2.4){\Large $B_k\subseteq W_k$}
\psdots[linecolor=black, dotsize=0.2](2.0,2.6)
\psdots[linecolor=black, dotsize=0.2](2.8,2.6)
\psdots[linecolor=black, dotsize=0.2](3.6,2.6)
\psdots[linecolor=black, dotsize=0.2](4.4,2.6)
\psdots[linecolor=black, dotsize=0.2](2.0,1.2)
\psdots[linecolor=black, dotsize=0.2](2.8,1.2)
\psdots[linecolor=black, dotsize=0.2](3.6,1.2)
\psdots[linecolor=black, dotsize=0.2](4.4,1.2)
\psdots[linecolor=black, dotsize=0.2](2.0,-0.2)
\psdots[linecolor=black, dotsize=0.2](2.8,-0.2)
\psdots[linecolor=black, dotsize=0.2](3.6,-0.2)
\psdots[linecolor=black, dotsize=0.2](4.4,-0.2)
\psdots[linecolor=black, dotsize=0.2](2.0,-1.8)
\psdots[linecolor=black, dotsize=0.2](2.8,-1.8)
\psdots[linecolor=black, dotsize=0.2](3.6,-1.8)
\psdots[linecolor=black, dotsize=0.2](4.4,-1.8)
\psline[linecolor=black, linewidth=0.04](1.0,1.2)(0.6,1.2)
\psline[linecolor=black, linewidth=0.04](1.0,-0.2)(0.6,-0.2)
\rput[bl](0.0,-0.4){\Large{$i$}}
\rput[bl](0.0,1.0){\Large{$j$}}
\psline[linecolor=black, linewidth=0.04](1.0,2.6)(0.6,2.6)(0.6,2.6)
\rput[bl](0.0,2.4){\Large{$k$}}
\rput[bl](9.0,-3.0){\Large{$N$}}
\psline[linecolor=black, linewidth=0.04](3.6,1.2)(2.0,-0.2)
\psline[linecolor=black, linewidth=0.04](3.6,1.2)(2.8,-0.2)
\psline[linecolor=black, linewidth=0.04](3.6,1.2)(3.6,-0.2)
\psline[linecolor=black, linewidth=0.04](3.6,1.2)(4.4,-0.2)
\psline[linecolor=black, linewidth=0.04](3.6,1.2)(5.2,0.0)
\psline[linecolor=black, linewidth=0.04](3.6,1.2)(6.4,0.0)
\psframe[linecolor=black, linewidth=0.04, linestyle=dashed, dash=0.17638889cm 0.10583334cm, dimen=outer, framearc=0.4](4.0,2.8)(3.2,1.0)
\psframe[linecolor=black, linewidth=0.04, linestyle=dashed, dash=0.17638889cm 0.10583334cm, dimen=outer, framearc=0.4](4.8,0.4695652)(4.0,-2.0869565)
\psbezier[linecolor=black, linewidth=0.04, linestyle=dotted, dotsep=0.10583334cm, arrowsize=0.05291666666666667cm 2.0,arrowlength=1.4,arrowinset=0.0]{->}(5.175,1.9701645)(4.7125,2.1434405)(4.1125,2.0284483)(3.8625,1.925)(3.6125,1.8215517)(3.5875,1.5732758)(3.5875,1.325)
\rput[bl](5.35,1.6875){\large{$b_{n,j}$}}
\psbezier[linecolor=black, linewidth=0.04, linestyle=dotted, dotsep=0.10583334cm, arrowsize=0.05291666666666667cm 2.0,arrowlength=1.4,arrowinset=0.0]{->}(6.1125,-0.975)(5.65,-1.425)(4.4125,-1.125)(4.4125,-0.325)
\rput[bl](6.275,-1.1375){\large{$b_{m,i}$}}
\end{pspicture}
}
\caption{$b_{n,j}$ and its $k$ successors.}%
\label{}%
\end{figure}

Assume that $|((k+1)m+i)- ((k+1)n+j)|\le k$;
then $i\ne j$ and, without loss of generality,  we can suppose that $i<j$. Then we have 
$b_{m, i}\in \bigcup\nolimits_{i'<j}B_{i'}$, so
$b_{n,j}\in N_H(b_{m,i})$ by \eqref{eq:fromWjfin}. In other words,  
$\oe\{b_{m,i},b_{n,j}\}$ is an edge in $H$ which yields (B).  

\medskip

Consider next the case when  $W_0$ is infinite; let us outline the idea first
in the case when $k=2$. Bob will play one element sets at each step and aims
to build a one-way infinite square of a path following the lexicographical ordering on $\mathbb N \times \{0,1,2\}$. However, he picks the vertices in a different order, denoted by $\trianglelefteq$ later, which is demonstrated in Figure \ref{ordfig}.

\begin{figure}[H]
\centering
\psscalebox{0.9 0.9} 
{
\begin{pspicture}(0,-3.2)(14.19,2.7234793)
\psline[linecolor=black, linewidth=0.04, linestyle=dashed, dash=0.17638889cm 0.10583334cm, arrowsize=0.05291666666666667cm 2.0,arrowlength=1.4,arrowinset=0.0]{->}(2.4,-3.2)(12.0,-3.2)
\psline[linecolor=black, linewidth=0.04, linestyle=dashed, dash=0.17638889cm 0.10583334cm, arrowsize=0.05291666666666667cm 2.0,arrowlength=1.4,arrowinset=0.0]{->}(2.0,-2.8165207)(2.0,1.5834793)
\psline[linecolor=black, linewidth=0.04](12.0,1.5834793)(2.4,1.5834793)(2.4,0.5834793)(12.0,0.5834793)
\psline[linecolor=black, linewidth=0.04](12.0,-0.01652069)(2.4,-0.01652069)(2.4,-1.0165207)(12.0,-1.0165207)
\psline[linecolor=black, linewidth=0.04](12.0,-1.6165206)(2.4,-1.6165206)(2.4,-2.6165206)(12.0,-2.6165206)
\psline[linecolor=black, linewidth=0.04](2.2,-2.0165207)(1.6,-2.0165207)
\psline[linecolor=black, linewidth=0.04](2.2,-0.41652068)(1.6,-0.41652068)
\psline[linecolor=black, linewidth=0.04](2.2,1.1834793)(1.6,1.1834793)
\rput[bl](0,1.0){\Large $k=2$}
\rput[bl](12.6,-3.4){\Large $\mathbb N$}
\rput[bl](3.0,-2.2165208){\Large 1. }
\rput[bl](4.2,-2.2165208){\Large 2.}
\rput[bl](5.6,-2.2165208){\Large 4.}
\rput[bl](7.2,-2.2165208){\Large 7.}
\rput[bl](8.4,-2.2165208){\Large 10.}
\rput[bl](3.0,0.9834793){\Large 6.}
\rput[bl](4.2,0.9834793){\Large 9.}
\rput[bl](5.4,0.9834793){\Large 12.}

\rput[bl](3.0,-0.6165207){\Large 3.}
\rput[bl](4.2,-0.6165207){\Large 5.}
\rput[bl](5.6,-0.6165207){\Large 8.}
\rput[bl](12.5,0.9834793){\Large $\subseteq W_0$}
\rput[bl](12.5,-0.6165207){\Large $\subseteq W_1$}
\rput[bl](12.5,-2.4165206){\Large $\subseteq W_2$}
\rput[bl](7.2,-0.6165207){\Large 11.}
\psframe[linecolor=black, linewidth=0.04, linestyle=dashed, dash=0.17638889cm 0.10583334cm, dimen=outer, framearc=0.5](6.42,1.5034794)(5.2,-2.4765208)
\psframe[linecolor=black, linewidth=0.04, linestyle=dashed, dash=0.17638889cm 0.10583334cm, dimen=outer, framearc=0.5](8.1,-0.1)(6.92,-2.4765208)
\end{pspicture}
}

\caption{The two orderings.}%
\label{ordfig}
\end{figure}
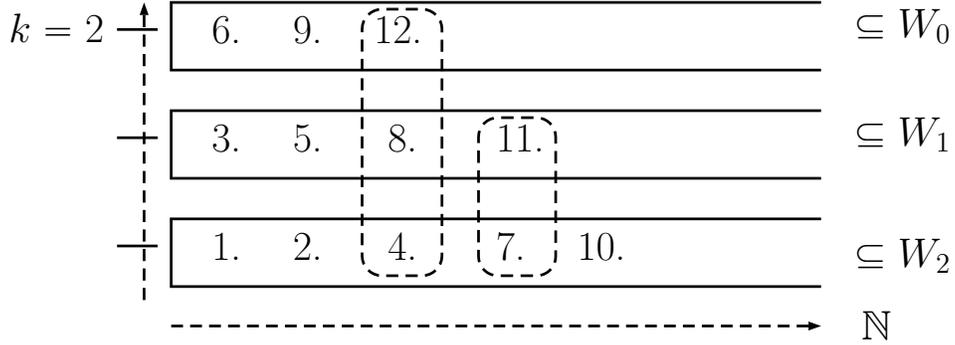

This way Bob makes sure that when he chooses the $12^{th}$ element he already 
picked its two successors (in the $7^{th}$ and $11^{th}$ plays) and two 
predecessors (in the $8^{th}$ and $4^{th}$ plays) in the lexicographical 
ordering, hence we can ensure the edge relations here.

Now, we define the strategy more precisely. In each round Bob will pick a single element $b_{n,j}$ for some $(n,j)\in \NN\times \{0,1,\dots, k\}$
such that $\big\{b_{n,j}:(n,j)\in \NN\times \{0,1,\dots, k\}\big\}$ will be the $k^{th}$ power of a path 
in the lexicographical order of $\NN\times \{0,1,\dots, k\}$.

As we said earlier, Bob will not choose the points $b_{n,j}$ in the lexicographical order of 
$\NN\times \{0,1,\dots, k\}$, i.e. typically the $((k+1)n+j)^{th}$ move of  Bob,  denoted by $B_{(k+1)n+j}$, is \emph{not} $\{b_{n,j}\}$.

To describe Bob's strategy we should define another order on $\NN\times \{0,1,\dots, k\}$
as follows:
\begin{align}
 (m,i)\trianglelefteq (n,j)\ \text{ iff }\  (m+i<n+j) \text{ or }  (m+i=n+j  \text{ and } i\le j).
\end{align}
Write $(m,i)\triangleleft (n,j)$ iff  $(m,i)\trianglelefteq (n,j)$ and $(m,i)\ne (n,j)$.
Clearly every $(n,j)$ has just finitely many $\triangleleft$-predecessors. Let $f(\ell)$ denote the 
$\ell^{th}$ element of $\NN\times \{0,1,\dots, k\}$ in the order $\triangleleft$.

Bob will choose $B_\ell=\{b_{f(\ell)}\}$ in the $\ell^{th}$ round as follows: if $f(\ell)=(n,j)$,
then 
\begin{enumerate}[(a)]
 \item if $j=0$ then 
\begin{align}\label{eq:fromw0}
b_{n,j}=\min \Big (W_0\setm \big (\bigcup_{s\le \ell}A_s\cup \bigcup_{t< \ell}B_t\big) \Big );  
\end{align}
 \item if $j>0$ then 
\begin{align}\label{eq:fromWj}
b_{n,j}\in W_j\cap N_H\big[\{b_{m,i}: (m,i)\triangleleft (n,j), i<j \}\big].  
\end{align}
\end{enumerate}

Bob can choose a suitable $b_{n,j}$ by \eqref{eq:winnig} as $\{b_{m,i}: (m,i)\triangleleft (n,j), i<j \}$ is a finite subset of $\bigcup_{i<j}W_{i}$.

We claim that 
\begin{enumerate}[(A)]
\item  $W_0\subseteq \bigcup\{A_n, B_n:n\in \mbb N\}$, and
 \item $P=\{b_{n,j}:n\in \mbb N, j\le k\}$ is the $k^{th}$-power of a path.
\end{enumerate}

(A) is clear because in (\ref{eq:fromw0}) we chose the minimal possible element.

Let $(m,i)\neq (n,j)\in \mathbb{N}\times \{0,\dots, k\}$. Then $b_{m,i}$ and  $b_{n,j}$ are  the  $((k+1)m+i)^{th}$ and  
$((k+1)n+j)^{th}$ elements, respectively, in the lexicographical order.  
Assume that $|((k+1)m+i)- ((k+1)n+j)|\le k$.
Then $i\ne j$ and $|m-n|\le 1$.

Without loss of generality, we can assume that $i<j$. Then $|m-n|\le 1$ implies $m+i\le n+j$ and hence 
$(m, i)\triangleleft (n,j)$. Since $i<j$ as well, $b_{n,j}\in N_H(b_{m,i})$ must hold by \eqref{eq:fromWj}. In other words,  
$\oe\{b_{m,i},b_{n,j}\}$ is an edge in $H$ which yields (B).  
\end{proof}
We arrive at one of our main results:

\begin{theorem}\label{tm:fonat1}
For all positive natural numbers $k,r$ and an $r$-edge coloring of $K_\NN$ the vertices can be partitioned into  $\leq r^{ (k-1) r+1}$ one-way infinite monochromatic $k^{th}$ powers of paths and a finite set.
\end{theorem}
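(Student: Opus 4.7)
The plan is to build a rooted tree of partitions of $V$ by iterating Lemma \ref{uftrick}, identify a winning game for Bob on each leaf via Theorem \ref{tm:winnig}, and run all these games in parallel through Lemma \ref{parallel}. The exponent $m:=(k-1)r+1$ is precisely the pigeonhole threshold: every length-$m$ word over an $r$-letter alphabet has some letter appearing at least $k$ times, and these $k$ occurrences are exactly what we need to produce the helper sets $W_1,\dots,W_k$ in Theorem \ref{tm:winnig}.

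Concretely, I set $V_\emptyset:=V$ and, for each internal node $V_t$ of depth less than $m$ that is still infinite, fix a non-trivial ultrafilter $U_t$ on $V_t$ and apply Lemma \ref{uftrick} to the complete graph on $V_t$ to obtain a partition $V_t=\bigsqcup_{c<r}V_{t,c}$, where $V_{t,c}=\{v\in V_t:N_G(v,c)\cap V_t\in U_t\}$. Every finite node is declared a leaf. The resulting leaves partition $V$ into at most $r^m$ pieces, each carrying its color sequence $(c_0,\dots,c_{m-1})$ read along the path from the root. For an infinite leaf $V_\lambda$, fix by pigeonhole a color $i^*=i^*(\lambda)$ and indices $j_1<\cdots<j_k$ with $c_{j_t}=i^*$, and write $V^{[\ell]}$ for the ancestor of $\lambda$ at level $\ell$, so $V^{[0]}=V$ and $V^{[m]}=V_\lambda$. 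Set $W_0:=V_\lambda$ and $W_\ell:=V^{[j_{k-\ell+1}]}$ for $\ell=1,\dots,k$; these form a nested chain $V_\lambda\subseteq W_1\subseteq\cdots\subseteq W_k$, and the strict inequalities $j_{k-j'+1}>j_{k-j'}$ give the key containment $W_{j'}\subseteq V^{[j_{k-j'}+1]}$ for every $j'<k$. Since $c_{j_{k-j'}}=i^*$, the set $V^{[j_{k-j'}+1]}$ is by construction the color-$i^*$ piece produced by Lemma \ref{uftrick} inside $V^{[j_{k-j'}]}$, so for every $v\in V^{[j_{k-j'}+1]}$ we have $N_G(v,i^*)\cap V^{[j_{k-j'}]}\in U_{V^{[j_{k-j'}]}}$. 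Hence for any finite $F\subseteq W_0\cup\cdots\cup W_{j'}=W_{j'}$, the set $W_{j'+1}\cap N_G(F,i^*)=V^{[j_{k-j'}]}\cap N_G(F,i^*)$ is a finite intersection of sets in $U_{V^{[j_{k-j'}]}}$, hence itself in that ultrafilter and therefore infinite. This is exactly the hypothesis of Theorem \ref{tm:winnig}, so Bob wins $\gm{H_{i^*}}{V_\lambda}{k}$ on the color-$i^*$ subgraph $H_{i^*}:=(V,c^{-1}(i^*))$.

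Lemma \ref{parallel} applied to the cover $\{V_\lambda\}_\lambda$ together with the color assignment $\lambda\mapsto i^*(\lambda)$ then partitions $V$ into at most $r^{(k-1)r+1}$ monochromatic $k^{\text{th}}$ powers of paths. The finitely many finite pieces (coming from finite leaves, or from finite paths produced by Bob's strategy when the parallel game forces his $B$-moves to end) can be absorbed into a single finite exceptional set, leaving at most $r^{(k-1)r+1}$ one-way infinite monochromatic $k^{\text{th}}$ powers of paths as required. The main conceptual hurdle is aligning the helper chain with the ancestral chain along the tree: each of the $k$ conditions in Theorem \ref{tm:winnig} must be discharged by a single ultrafilter property at a specific level, and the depth $m=(k-1)r+1$ is the smallest at which pigeonhole guarantees the $k$ suitably spaced levels needed to build the $W_\ell$'s.
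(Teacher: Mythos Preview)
Your proposal is correct and follows essentially the same approach as the paper: both build an $r$-ary tree of depth $m=(k-1)r+1$ by iterating the ultrafilter partition of Lemma~\ref{uftrick}, use pigeonhole on the color sequence of each infinite leaf to extract $k$ ancestral levels of a single color, feed the resulting nested chain into Theorem~\ref{tm:winnig} as the sets $W_0,\dots,W_k$, and then run Lemma~\ref{parallel} over all infinite leaves. Your explicit tracking of the ultrafilters $U_t$ (rather than the paper's black-box use of $d_c$) makes the verification of \eqref{eq:winnig} slightly more transparent; one small remark is that for infinite leaves Bob's strategy in Theorem~\ref{tm:winnig} always produces a one-way infinite path, so the only source of the finite exceptional set is the union of the finite leaves---your parenthetical about ``finite paths produced by Bob's strategy'' is unnecessary.
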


\begin{proof}
The set of sequences of length $m$ (at most $m$, respectively) whose members are from a set $X$ is denoted by $X^m$ ($X^{\le m}$, respectively).

Recall that for each  $r$-edge coloring $c$ of $K_\NN$  Lemma \ref{uftrick} gives a partition
of the vertices, which we will denote by $d_c:\NN\to \{0, \dots, r-1 \}$, and a special color
$i_c < r$.
We define a set $A_s\subs \NN$ for each finite sequence $s\in \{0, \dots, r-1\}^{\le (k-1) r+1}$ by induction on
$|s|$ as follows:
\begin{itemize}
 \item let $A_\empt=\NN$,
 \item if $A_s$ is defined and finite then let
\begin{equation}
A_{s^\frown 0}=A_s\text{ and }A_{s^\frown i}=\empt\text{ for $1\le i<r$},
\end{equation}
\item if $A_s$ is defined and infinite then let
\begin{equation}
A_{s^\frown i}=\{u\in A_s:d_{c\restriction A_s}(u)=i\} \text{ for $i<r$}.
\end{equation}

\end{itemize}

Fix an arbitrary  $s\in  \{0, \dots, r-1\}^{(k-1) r+1}$ such that $A_s$ is infinite.  
Then there is a color $i_s<r$ and a $k$-element subset 
$H_s=\{h_1>h_2>\dots> h_{k}\}$ 
of $\{0, \dots, (k-1)r\}$ such that 
\begin{equation}
s(h_j)=i_s 
\end{equation}
for all $j=1, \dots, k$. 
Let  $W_0=A_s$  and  $W_j=A_{s\uhp h_j}$ for 
$j=1, \dots,  k$.
Note that the choice of $i_s$ ensures that
\begin{displaymath}
W_{j+1}\cap N_{G_s}[F]\text{ is infinite}
\end{displaymath}
   for each $j<\ekk$ and finite set 
$ F  \subset \bigcup_{i\le j}W_i$, 
where $G_{s}=(\NN, c^{-1}\{i_s\})$.
Thus, by Theorem
\ref{tm:winnig}, Bob has a winning strategy in the game
$\gm {G_{s}}{ A_s}\ekk$. 

Playing the games 
\begin{equation}
 \{\gm {G_{s}}{ A_s}\ekk:s\in \{0, \dots, r-1\}^{(k-1)r+1}\text{ and $A_s$ is infinite}\}
\end{equation}
simultaneously, that is, applying Lemma \ref{parallel} we can find at most $r^{(k-1)r+1}$ many $k^{th}$ powers
 of disjoint 
monochromatic paths
which cover $\NN$ apart from the finite set 
$\bigcup\{A_s:\text{$A_s$ is finite}\}$.  
\end{proof}

In the case of $k=r=2$, we have the following stronger result:

\begin{theorem}\label{tm:fonat2}
(1)  Given an edge coloring of  $K_{{\NN}}$
with $2$ colors, the vertices can be partitioned into 
$\leq$ 4 monochromatic path-squares  (that is, second powers of paths):
\begin{equation}
 \Covrel{K_{\NN}}{\mf{PathSquare}}{2,4}.
\end{equation}
(2)  The result above is sharp: there is  an edge coloring of  $K_{{\NN}}$
with $2$ colors such that the vertices cannot be covered by  3 monochromatic 
path-squares:
\begin{equation}
 \notCovrel{K_{\NN}}{\mf{PathSquare}}{2,3}.
\end{equation}
\end{theorem}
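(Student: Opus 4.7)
For part (1), my plan is to tighten the general bound $r^{(k-1)r+1}=8$ given by Theorem \ref{tm:fonat1} (applied with $k=r=2$) down to $4$ using Pokrovskiy's recent finite-graph theorem. I would first apply Lemma \ref{uftrick} to obtain a partition $\NN=V_0\cup V_1$, and, relabelling the two colours if necessary, arrange that $V_0$ is infinitely connected in colour $0$. Theorem \ref{tm:winnig} applied with $W_0=W_1=W_2=V_0$ then gives a winning strategy for Bob in $\gm{(\NN,c^{-1}(0))}{V_0}{2}$, so $V_0$ is covered by a single colour-$0$ path-square. I would then apply Lemma \ref{uftrick} again, inside $V_1$ with the induced colouring, to extract a partition $V_1=U_0\cup U_1$ with $U_j$ infinitely connected in colour $j$ inside $V_1$ for some $j\in\{0,1\}$; Theorem \ref{tm:winnig} produces a second monochromatic path-square covering $U_j$, leaving the residual set $R=U_{1-j}$, which is only infinitely linked.

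The handling of $R$ is where Pokrovskiy's recent finite theorem enters: the result partitions any $2$-edge coloured finite complete graph into a bounded number (at most two) of monochromatic path-squares, and I would lift it to $R$ by applying it along an exhausting sequence of finite subsets of $R$ and extracting a coherent infinite partition via a compactness or ultrafilter argument. Combined with the parallel-games machinery of Lemma \ref{parallel}, this should yield at most two further monochromatic path-squares covering $R$, for a total of $\leq 4$. The main technical obstacle is the lifting step: infinite linking supplies paths but not triangles, so finite Pokrovskiy-partitions do not glue automatically into infinite path-squares, and one may need a further application of Lemma \ref{uftrick} inside $R$ to guarantee the triangle-closures required to extend Pokrovskiy's finite path-squares consistently.

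For part (2), my plan is to exhibit an explicit $2$-colouring of $K_\NN$ that cannot be covered by three monochromatic path-squares. Fix a partition $\NN=A_0\cup A_1\cup A_2\cup A_3$ into four infinite sets and design the colouring so that (a) every monochromatic triangle lies in a union $A_i\cup A_j$ of two of the parts, with the pair $\{i,j\}$ determined by the colour of the triangle; and (b) the collection of pairs allowed for each colour is chosen so that any selection of three allowed pairs (one per path-square, in its own colour) misses at least one $A_i$ entirely. Property (a) forces every monochromatic path-square to be confined to a single allowed pair, since consecutive three-vertex windows are monochromatic triangles and must share a pair; property (b) then forces any three monochromatic path-squares together to miss some $A_i$, contradicting covering. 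The main obstacle for part (2) is enforcing (a) against infinite path-squares, which might slip between overlapping pairs; the cross-edge colouring on $\{A_0,\ldots,A_3\}$ should give each colour girth at least $4$ in the natural reduced graph, which I would verify by a case analysis on the transitions of a hypothetical path-square between the $A_i$'s.
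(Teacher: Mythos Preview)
For part (1), your outline starts where the paper does---two nested applications of Lemma \ref{uftrick}---but you miss the structural leverage those applications give and you misquote Pokrovskiy. Write $A_0=V_0$, $B_0=V_1$, and (if $B_0$ is infinite) $A_1=U_{i_{c_1}}$, $B_1=U_{1-i_{c_1}}$ for the second application. Your residual $R=B_1$ is \emph{not} ``only infinitely linked'': when $i_{c_1}=0$, the nesting $B_1\subset B_0$ makes $(B_1,A_1,A_0)$ a valid witness for Theorem \ref{tm:winnig} in colour $1$ (because $N[F,1]\cap A_1$ is infinite for finite $F\subset B_1$ and $N[F,1]\cap A_0$ is infinite for finite $F\subset B_0$), so one further game already covers $R$ and three path-squares suffice. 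The genuinely hard case is $i_{c_1}=1$; the paper does \emph{not} use compactness or Pokrovskiy on $R$, but instead splits $B_1$ into two monochromatic \emph{paths} (Rado) and then thickens each path into a path-square by inserting vertices from $A_1$ and $A_0$ respectively via Lemma \ref{lm:pa2sqr}, taking care that the game-witnesses for $A_0$ and $A_1$ survive the insertion. Pokrovskiy's theorem is invoked only when $B_0$ is \emph{finite}, and its statement is not what you wrote: it gives two disjoint \emph{paths} of one colour plus one path-square of the other, not two path-squares. Your compactness step therefore rests on a nonexistent lemma, and even with a correct finite input, compactness does not propagate local path-square structure to a global one---overlapping triangles are too rigid to glue along an ultrafilter.

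For part (2), your four-infinite-parts scheme faces precisely the obstacle you flag and do not resolve: a monochromatic path-square is a chain of overlapping triangles, and nothing prevents it from sliding from one allowed pair $A_i\cup A_j$ to an overlapping one $A_j\cup A_k$, thereby visiting three or more parts. The paper avoids this entirely by taking three of the four parts \emph{finite}: $|B|=|C|=4$, $|D|=1$, $A$ infinite, with colour $1$ exactly on the $A$--$(B\cup C\cup D)$ edges and on $[B]^2\cup[C]^2$. Then any path-square meeting both $A$ and its complement is colour $1$ and hence finite on $A$, forcing one infinite colour-$0$ path-square inside $A$; the remaining nine vertices $B\cup C\cup D$ are then shown, by a short finite case check, not to be coverable by two monochromatic path-squares. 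This is both simpler and immune to the wandering issue.
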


To prove Theorem \ref{tm:fonat2} we need some further preparation. First, in \cite[Corollary 1.10]{Po} Pokrovskiy proved the following:
{\em  Let $k,n \ge 1$ be natural numbers. Suppose that the edges of $K_n$ are colored with two colors.
 Then the vertices of
 $K_n$ can be covered with $k$ disjoint paths of color $1$ and  
a disjoint $k^{th}$ power of a path of color $0$.}

Second, we will apply the following

\begin{lemma}\label{lm:pa2sqr}
Assume that $P=v_0,v_1,\dots $ is a finite or one-way infinite path in a graph $G$ and there is $W\subs V(G)\setm P$ so that
\begin{equation}
\left(W\cap \nns{G}{\{v_i,v_{i+1},
v_{i+2},v_{i+3} \}} \right)  \text{ is infinite for all } v_i\in P.
\end{equation}
Let $\mc F$ be a countable family of infinite subsets of $W$.
Then $G$ contains a square of a path $R$ which covers $P$ while $R\setm P\subs W$, 
and 
$F\setminus R$ is infinite for all $F\in \mc F$. Moreover, if $P$ is finite
then $R$ can also be chosen to be finite.
\end{lemma}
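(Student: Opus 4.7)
The plan is to build $R$ by interleaving $P$ with vertices of $W$ using the asymmetric pattern of two $v$'s followed by one $w$:
\[
R \;=\; v_0,\, v_1,\, w_0,\, v_2,\, v_3,\, w_1,\, v_4,\, v_5,\, w_2,\, \ldots,
\]
so that each $w_k$ occupies a position flanked by $v_{2k}, v_{2k+1}$ on the left and $v_{2k+2}, v_{2k+3}$ on the right. Writing $r_{3k}=v_{2k}$, $r_{3k+1}=v_{2k+1}$, $r_{3k+2}=w_k$, every pair of positions in $R$ differing by $1$ or $2$ turns out to be either an edge $v_iv_{i+1}$ of $P$ itself or an edge of the form $w_k v_j$ with $j\in\{2k,2k+1,2k+2,2k+3\}$; any two $w$'s sit at distance $\ge 3$ in $R$, so no $w$--$w$ edge is ever required. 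Therefore if I pick $w_k\in W\cap \nns{G}{\{v_{2k},v_{2k+1},v_{2k+2},v_{2k+3}\}}$, which is infinite by hypothesis, $R$ is automatically the square of a (one-way infinite) path. This is precisely why the hypothesis about the common neighborhood of \emph{four} consecutive vertices of $P$ has exactly the right strength: the simpler alternating pattern $v_0, w_0, v_1, w_1, \ldots$ would force nonexistent $w_kw_{k+1}$ edges. I expect identifying this interleaving pattern to be the key insight; once it is in place, verifying that $R$ is a square of a path is routine index bookkeeping.

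To additionally guarantee that $F\setm R$ is infinite for every $F\in\mc F$, enumerate $\mc F=\{F_n:n\in\NN\}$ and fix in advance pairwise distinct elements $f_{n,k}\in F_n$ for $n,k\in\NN$. At stage $k$ of the inductive construction, choose
\[
w_k \;\in\; W\cap \nns{G}{\{v_{2k},v_{2k+1},v_{2k+2},v_{2k+3}\}}\setm\bigl(\{w_i:i<k\}\cup\{f_{n,k}:n\le k\}\bigr);
\]
the excluded set is finite, so a choice is available in the infinite candidate set. Then for every $n$, $\{f_{n,k}:k\ge n\}\subs F_n\setm R$, so $F_n\setm R$ is infinite.

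For the finite case $P=v_0,\ldots,v_N$, the same construction is carried out up to the last complete block of two $v$'s. If $N=2m+1$ the pattern terminates cleanly at $\ldots v_{2m-2}v_{2m-1}w_{m-1}v_{2m}v_{2m+1}$; if $N=2m$, one appends one more $w_{m-1}$ between $v_{2m-1}$ and $v_{2m}$, and the requirement that $w_{m-1}$ only be adjacent to $v_{2m-2},v_{2m-1},v_{2m}$ follows from the hypothesis applied to $v_{2m-3}$ and the monotonicity $\nns{G}{\{v_{2m-2},v_{2m-1},v_{2m}\}}\supseteq \nns{G}{\{v_{2m-3},v_{2m-2},v_{2m-1},v_{2m}\}}$. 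The resulting $R$ is then finite, and the $\mc F$-clause is automatic since only finitely many $w$'s are used.
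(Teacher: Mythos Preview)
Your interleaving pattern $v_0,v_1,w_0,v_2,v_3,w_1,\dots$ is exactly the one the paper uses, and your verification that it produces a square of a path is correct. The gap is in the $\mc F$ bookkeeping: fixing the $f_{n,k}$ \emph{in advance} and then avoiding only $\{f_{n,k}:n\le k\}$ at stage $k$ guarantees $w_k\neq f_{n,k}$ for $n\le k$, but does nothing to prevent $w_j=f_{n,k}$ when $j\neq k$. Concretely, $w_0$ is chosen avoiding only $\{f_{0,0}\}$, so it may well happen that $w_0=f_{0,1}$; then $f_{0,1}\in R$ and your claimed inclusion $\{f_{0,k}:k\ge 0\}\subs F_0\setm R$ fails.

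The fix is to reserve the witnesses along the way rather than beforehand. The paper enumerates $\mc F$ as $F_0,F_1,\dots$ with each member appearing infinitely often, and at stage $i$ picks \emph{both} $w_i\in W\cap\nns{G}{\{v_{2i},v_{2i+1},v_{2i+2},v_{2i+3}\}}$ and some $f_i\in F_i$, all distinct from everything chosen so far. Then each $f_i$ is by construction different from every $w_j$, so $\{f_i:F_i=F\}$ is an infinite subset of $F\setm R$ for every $F\in\mc F$. (Equivalently, you could keep your setup but at stage $k$ avoid the larger finite set $\{f_{n,m}:n,m\le k\}$; then for $k\ge n$ the element $f_{n,k}$ is avoided by every $w_j$ with $j\ge k$, and since the $f_{n,k}$ are pairwise distinct only finitely many of them can coincide with $w_0,\dots,w_{k-1}$.)
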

  
\begin{proof}
Let $F_0, F_1,\dots$ be an enumeration of $\mc F$ in which each element shows up infinitely often. 

Pick distinct vertices $w_0,f_0, w_1,f_1,\dots$ from $W$
such that $$w_i\in \nns{G}{\{v_{2i},v_{2i+1},v_{2i+2},v_{2i+3} \}}\text{ and }
f_i\in F_i.$$

Then 
\begin{equation}
R=v_0, v_1, w_0, v_2, v_3, w_1, v_4,\dots, v_{2i}, v_{2i+1}, w_i,
 v_{2i+2}, v_{2i+3}, w_{i+1},\dots    
\end{equation}
is a   square of a path which covers $P$,   $R\setm P\subs W$,
and $\{f_n:n\in \NN, F_n=F\}\subseteq F\setm R$ for all $F\in \mc F$.

The last statement concerning the finiteness of $R$ is obvious.
\end{proof}

\begin{proof}[Proof of Theorem \ref{tm:fonat2}(1)]

Fix a coloring $c:\br {\NN};2;\to \{0, 1\}$ and let $G_i=({\NN}, c^{-1}\{i\})$
for $i<2$.

We will use the notation of Lemma \ref{uftrick}. Let $c_0=c$
and let
\begin{equation}
A_0=\{v\in {\NN}:d_{c_0}(v)=i_{c_0}\}\text{ and } B_0={\NN}\setm A_0. 
\end{equation}
Let $c_1=c_0\restriction B_0$ and provided $B_0$ is infinite we let
\begin{equation}
A_1=\{v\in B_0:d_{c_1}(v)=i_{c_1}\}\text{ and } B_1=B_0\setm A_1. 
\end{equation}

We can assume that $i_{c_0}=0$ without loss of generality.

\medskip
\noindent{\bf Case 1: }{\em  $B_0$ is finite.}

First, $G[B_0]$ can be written as the disjoint union of two paths $P_0$ and $P_1$ of color 1 and a
square of a path $Q$ of color 0 by the above mentioned  result of Pokrovskiy \cite[Corollary 1.10]{Po}.
Applying  Lemma \ref{lm:pa2sqr} for  $G=G_1$, $P=P_0$, $W=A_0$ and $\mc F=\empt$  it follows 
that there is a finite square of a path $R_0$ in color $1$ which covers $P_0$ and $R_0\setm P_0\subs A_0$. Applying  Lemma \ref{lm:pa2sqr} once more for  $G=G_1$, $P=P_1$, $W=A_0\setm R_0$ and $\mc F=\empt$  it follows 
that there is a finite square of a path $R_1$ in color $1$ which covers $P_1$,  and 
$R_1\setm P_1\subs  A_0\setm R_0$. 
Let $A_0'=A_0\setm (R_0\cup R_1)$.

Now, by Theorem \ref{tm:winnig}, Bob wins the game $\gm {G_0}{A_0'}2$ witnessed 
by the sequence $(A_0',A_0',A_0')$; 
thus $G[A_0']$ can be covered by a single square of a path $S$  of color 0 by Lemma \ref{parallel}. That is, $G$ can be covered by $4$  disjoint monochromatic squares of paths: $R_0$,
$R_1$, $Q$ and $S$.

\medskip
\noindent{\bf Case 2: }{\em  $B_0$ is infinite and $i_{c_1}=0$.}

Note that, by Theorem \ref{tm:winnig}, Bob wins the games
\begin{enumerate}[(i)]
 \item $\gm {G_0} {A_0} 2$ witnessed by $(A_0,A_0,A_0)$,
 \item $\gm {G_0} {A_1} 2$ witnessed by $(A_1,A_1,A_1)$,
 \item $\gm {G_1} {B_1} 2$ witnessed by $(B_1,A_1,A_0)$.
\end{enumerate}
Hence, the vertices can be partitioned into two squares of paths of color 0 
and a single square  
of a path of color 1 by Lemma \ref{parallel}. 

\medskip
\noindent{\bf Case 3: }{\em  $B_0$ is infinite and  $i_{c_1}=1$.}

Since we applied Lemma \ref{uftrick} twice to obtain $A_0$ and $B_0$, 
and $A_1$ and $B_1$,
and $B_1\subseteq B_0$
we know that 
\begin{enumerate}[(a)]
 \item Bob wins the game $\gm {G_0} {A_0} 2$ witnessed by $(A_0,A_0,A_0)$;
 \item Bob wins the game $\gm {G_1} {A_1} 2$ witnessed by $(A_1,A_1,A_1)$;
\item $N[F,1]\cap A_0$  is infinite for every finite set $F \subset B_1$;
\item  $N[F,0]\cap A_1$  is infinite for every finite set $F \subset B_1$;
\item \label{e} $N[F,0]\cap A_0$  is infinite for every finite set $F \subset A_0$;
\item \label{f} $N[F,1]\cap A_1$  is infinite for every finite set $F \subset A_1$.
\end{enumerate}

%

%

First,  partition $B_1$ into two paths 
$P_0$ 
 and $P_1$ 
 of color 0 and 1, respectively. Indeed, if $B_1$ is infinite this can be done by Theorem \ref{rado} and if $B_1$ is finite one considers two disjoint paths $P_0$ and $P_1$ in $B_1$ of color 0 and 1 with  $|P_0|+|P_1|$ maximal  (as outlined in a footnote in \cite{GeGy}); it is easily seen that $P_0\cup P_1$ must be $B_1$.

Now, our plan is to cover $P_0$ and $P_1$ with disjoint squares of paths $R_0$ and $R_1$ of color 0 and 1, respectively, such that
$R_0\setm P_0\subs A_1$, $R_1\setm P_1\subs A_0$ while
\begin{enumerate}[(a')]
 \item Bob wins the game $\gm {G_0} {A_0\setm R_1} 2$ witnessed by $(A_0\setm R_1,A_0\setm R_1,A_0\setm R_1)$,
 \item Bob wins the game $\gm {G_1} {A_1\setm R_0} 2$ witnessed by $(A_1\setm R_0,A_1\setm R_0,A_1\setm R_0)$.
\end{enumerate}
  
Let 
\begin{align}
\mc F_0=\{N[F,0]\cap A_0: F \subset A_0 \text{ finite}\},
\end{align}
and 
\begin{align}
\mc F_1=\{N[F,1]\cap A_1: F \subset A_1 \text{ finite}\},
\end{align}
and note that these families consist of infinite sets by \eqref{e} and \eqref{f} above.
Apply Lemma \ref{lm:pa2sqr} for $G=G_0$, $W=A_1$,  $P=P_0$ and $\mc F=\mc F_1$
to find a square of a path $R_0$ in $G_0$ which covers $P_0$, $R_0\setm P_0\subs A_1$ and $F\setm R_0$ is infinite for all
$F\in \mc F_1$, that is, 
\begin{equation}\label{win1}
N[F,1]\cap (A_1\setm R_0) \text{ is infinite for every finite set } F \subset A_1. 
\end{equation}

Apply Lemma \ref{lm:pa2sqr} once more for $G=G_1$, $W=A_0$,  $P=P_1$ and 
$\mc F=\mc F_0$
to find a square of a path $R_1$ in $G_1$ with $R_1\setm P_1\subs A_0$ which covers $P_1$ and 
$F\setm R_1$ is infinite for all
$F\in \mc F_0$, that is, 
\begin{equation}\label{win2}
N[F,0]\cap (A_0\setm R_1) \text{ is infinite for every finite set }  F \subset A_0. 
\end{equation}

Then, by Theorem \ref{tm:winnig}, \eqref{win2} yields (a'), and  \eqref{win1} yields (b'). 

Hence $(A_0\setm R_{1}) \cup (A_1\setm R_{0})$
can be partitioned into two monochromatic squares of paths by 
Lemma \ref{parallel} 
which in turn gives a partition of all the vertices into 4
monochromatic squares of paths.
%
 \end{proof}

\begin{proof}[Proof of Theorem \ref{tm:fonat2}(2)]
Fix a partition $(A,B,C,D)$ of ${\NN}$ such that 
 $A$ is infinite, $|B|=|C|=4$, and $|D|=1$.
Define the coloring $c: [\NN]^2\to \{0, 1\}$  as follows see Figure \ref{fig}:
\begin{equation}
c^{-1}\{1\}= \{\oe\{a, v\} :  a \in A,  v \in B\cup C\cup D \} \cup \br B;2; \cup \br C;2;.  
\end{equation}
\begin{figure}[H]
\centering
\begin{tikzpicture}[xscale=1, yscale=1]
\draw[thick]  (1,2)   -- (3,2) -- (3,4) -- (1,4) --  (1,2) -- (3,4) (3,2) -- (1,4)   ;
\draw[fill] (1,2)  circle (2pt) (3,2)  circle (2pt) (3,4)  circle (2pt) 
 (1,4)  circle (2pt);

\draw[thick]  (6,2)   -- (8,2) -- (8,4) -- (6,4) --  (6,2) -- (8,4) (8,2) -- (6,4)   ;
\draw[fill] (6,2)  circle (2pt) (8,2)  circle (2pt) (8,4)  circle (2pt) 
 (6,4)  circle (2pt);

 \draw[fill] (4.5,3)  circle (2pt) ;
\draw  (4.5,2.2) node[above]  {$D$} ;
\draw[thick]  (6,2) -- (8,2) -- (7,3) -- cycle;
\draw[dashed]  (0,0) rectangle (10,1)   ;
\draw  (5,0.1) node[above]  {$A$} ; 
\draw  (2,2.2) node[above]  {$B$} ; 
\draw  (7,2.2) node[above]  {$C$} ; 

\foreach \x in {0,1,...,44  }{ 
\draw[thick, xshift=0.2*\x cm] (0.5,0.8) -- (0.5,1.2) ;
}

\end{tikzpicture}
\caption{\label{fig} The example for Theorem \ref{tm:fonat2}(2)}
\end{figure}
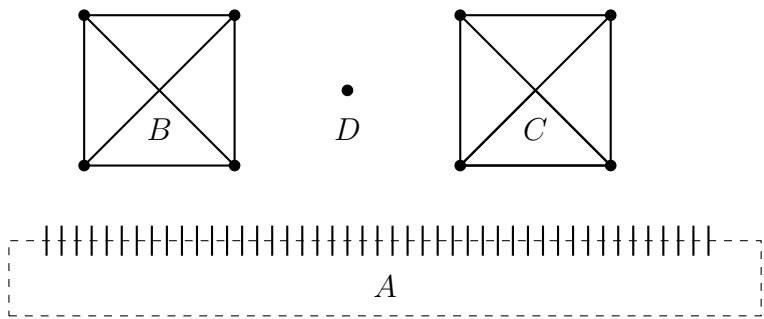

If $P$ is a monochromatic square of a path which intersects both $A$ and $B\cup C\cup D$,
then $P$ should be in color $1$, so $P\cap A$ should be finite.
Thus every partition of $K_{\NN}$ into monochromatic squares of paths
should contain an infinite  0-monochromatic square of a path $S\subs A$.  

It suffices to show now that $B\cup C\cup D$ cannot be covered by two monochromatic 
squares of paths. Let $D=\{d\}$.

First, if $P$ is a 1-monochromatic square of a path then $P'=P\cap (B\cup C\cup D)$
is a 1-monochromatic path. As two 1-monochromatic paths cannot cover
$B\cup C\cup D$, two 1-monochromatic squares of paths will not cover $B\cup C\cup D$ neither. 
 
Second, if $Q$ is a 0-monochromatic square of a path  which intersects 
$B\cup C\cup D$ then $Q\subs B\cup C\cup D$. Assume that $d\notin Q$ and let  $Q=x_1, x_2, \dots$. If $x_1\in B$ then $x_2\in C$ so $x_3$ does not exists because $Q$ is 
0-monochromatic square of a path. Hence $d\notin Q$ implies $|Q\cap B|\le 1$ and $|Q\cap C|\le 1$. If $d\in Q$, then cutting $Q$ into two by $d$ and using the observation above 
we yield that $|Q\cap B|\le 2$ and $|Q\cap C|\le 2$. In turn, two 0-monochromatic squares of  paths cannot cover $B\cup C\cup D$.

Finally using just one 0-monochromatic square of a  path  $Q$ 
we cannot cover $(B\cup C)\setm Q$ by a single 1-monochromatic square of a path because there is no 1-colored edge between $B\setm Q\ne \empt$ and $C\setm Q\ne \empt$.
\end{proof}

\section{Monochromatic path decompositions of $K_\oo$} \label{longpath}


The aim of this section is to extend Rado's Theorem \ref{rado} to 2-edge colored complete graphs of size $\omega_1$ (where $\omega_1$ is the smallest uncountable cardinal).

First, we need to extend certain definitions to the uncountable setting. 

\begin{definition}[Rado \cite{R}] We say that a graph $P=(V,E)$ is a
\emph{path} iff there is a well ordering $\prec$ on $V$ such that $$\{w\in
N_P(v):w\prec v\} \text{ is }\prec\text{-cofinal below } v$$ for all $v\in P$. 
\end{definition}

\begin{obs}\label{monpathobs} Suppose that $P=(V,E)$ is a graph and $\prec$ is a well ordering of $V$. Then the following are equivalent:
\begin{enumerate}
	\item $\prec$ witnesses that $P$ is a path,
	\item every $v,w\in V$ are connected by a $\prec$-monotone finite path in $P$.
\end{enumerate}
\end{obs}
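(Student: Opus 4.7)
Both implications are elementary once one unpacks the meaning of cofinality; the real content is the $(1)\Rightarrow (2)$ direction, which I would prove by transfinite induction along $\prec$.

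For $(2)\Rightarrow (1)$, I would argue directly. Fix $v\in V$ and any $u\prec v$; I must produce some $w\in N_P(v)$ with $u\preceq w \prec v$. By hypothesis there is a $\prec$-monotone finite path $u=x_0\prec x_1\prec \cdots \prec x_n=v$ in $P$. Its penultimate vertex $x_{n-1}$ is then a neighbor of $v$ satisfying $u=x_0\preceq x_{n-1}\prec v$, which shows that $N_P(v)\cap\{u':u'\prec v\}$ is $\prec$-cofinal below $v$.

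For $(1)\Rightarrow (2)$, I would fix any $v\in V$ and prove by transfinite induction on $w\succ v$ that there is a $\prec$-monotone finite path in $P$ from $v$ to $w$. If $v$ is the $\prec$-minimum below $w$ the base of the induction is vacuous. For the inductive step, suppose the claim is known for every $w'$ with $v\prec w'\prec w$. By (1), the neighbors of $w$ strictly below $w$ are $\prec$-cofinal below $w$, so we may pick $u\in N_P(w)$ with $v\preceq u\prec w$. If $u=v$, the two-vertex sequence $v,w$ is the desired monotone path; otherwise $v\prec u\prec w$, and the inductive hypothesis supplies a monotone path from $v$ to $u$, to which we append $w$ to finish.

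The only possible subtlety is making sure the transfinite induction goes through uniformly at limit and successor stages of $\prec$, but since the inductive step only invokes the hypothesis at the single element $u\prec w$ produced by cofinality, no separate limit-case argument is needed. I do not foresee a genuine obstacle here; the observation is essentially a reformulation of Rado's definition in a form more convenient for the arguments of Section~\ref{longpath}.
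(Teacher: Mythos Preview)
Your argument is correct. The paper itself offers no proof of this observation, treating it as immediate from Rado's definition; your write-up supplies exactly the elementary verification one would expect, with the cofinality condition giving the penultimate vertex in $(2)\Rightarrow(1)$ and a straightforward transfinite induction in $(1)\Rightarrow(2)$. The sentence ``If $v$ is the $\prec$-minimum below $w$ the base of the induction is vacuous'' is a bit garbled---you presumably mean the case where there is no $w'$ strictly between $v$ and $w$---but since your inductive step already covers this uniformly (cofinality then forces $u=v$), it causes no harm.
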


In particular, each vertex is connected to its $\prec$-successor by an edge and so this general definition of a path coincides with the usual path notion for finite graphs.

The order type of $(V,\prec)$ above is called \emph{the order type of the path}.
We will say that a path $Q$ \emph{end extends} the path $P$ iff $P\subs Q$,
$\prec_Q\uhp P=\prec_P$ and $v\prec_Q w$ for all $v\in P, w\in Q\setm P$. If
$R$ and $S$ are two paths so that the first point of $S$ has $\prec_R$-cofinally many
neighbors in $R$ then $R\cup S$ is a path which end extends $R$ and we denote
this path by $R^\frown S$.

Let $K_\omg$ denote $(\omg,[\omg]^2)$,  i.e. the complete graph on $\omg$. Now we are ready to formulate the main result of this section. 

\begin{theorem}\label{2col}
\begin{equation}
 \Covrel{K_{\omega_1}}{\mf{Path},\mf{Path}}{2}.
\end{equation}    
That is, given any coloring of  the edges of  $K_{\oo}$
with $2$ colors, the vertices can be partitioned into two monochromatic   
paths of distinct colors. 
\end{theorem}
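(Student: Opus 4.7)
The plan is a transfinite recursion of length $\omg$, mimicking the countable argument of Theorem~\ref{rado} on successive countable slices of $\omg$ while using an elementary chain of submodels to secure continuity at limit stages. I will fix a continuous increasing chain $\langle M_\alpha : \alpha < \omg\rangle$ of countable elementary submodels of $H(\theta)$ for sufficiently large regular $\theta$, with $c\in M_0$, and set $\delta_\alpha := M_\alpha\cap\omg$, so that $\langle\delta_\alpha:\alpha<\omg\rangle$ is a continuous strictly increasing sequence of countable ordinals with supremum $\omg$. The construction will produce disjoint paths $P_0^\alpha$ in color $0$ and $P_1^\alpha$ in color $1$ such that $V(P_0^\alpha)\sqcup V(P_1^\alpha)=\delta_\alpha$ and $P_i^\beta$ end-extends $P_i^\alpha$ whenever $\alpha\le\beta<\omg$; then $P_i:=\bigcup_{\alpha<\omg}P_i^\alpha$ will be the desired monochromatic paths partitioning $\omg$.

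Limit stages $\lambda$ should be automatic: I take $P_i^\lambda=\bigcup_{\alpha<\lambda}P_i^\alpha$ with the natural well-order coming from end-extension. Every limit position in this well-order already occurs as a limit position in some earlier $P_i^\alpha$, since end-extension preserves the set of vertices lying below any given element, so the cofinality condition at limits is inherited from earlier stages. The real work sits at the successor stage $\alpha\to\alpha+1$, where I must cover the countable slice $W_\alpha:=\delta_{\alpha+1}\setm\delta_\alpha$ by end-extending both $P_0^\alpha$ and $P_1^\alpha$. My plan for that step has three parts: first, apply Lemma~\ref{uftrick} inside $M_{\alpha+1}$ to obtain a partition $W_\alpha=A_0\sqcup A_1$ with $A_i$ infinitely linked in color~$i$; second, choose attachment vertices $a_i\in A_i$ whose color-$i$ neighborhoods inside $V(P_i^\alpha)$ are $\prec_{P_i^\alpha}$-cofinal in $P_i^\alpha$; third, invoke Lemma~\ref{simpath} with forced starting point $a_i$ to produce a color-$i$ path $Q_i$ on $A_i$ beginning at $a_i$, and set $P_i^{\alpha+1}=P_i^\alpha{\smf}Q_i$.

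The main obstacle is the second part of the successor step. The obvious two-color pigeonhole shows that, for each fixed $v\in\omg\setm\delta_\alpha$ and each $i$, at least one of $N(v,0)\cap V(P_i^\alpha)$ or $N(v,1)\cap V(P_i^\alpha)$ is $\prec_{P_i^\alpha}$-cofinal in $P_i^\alpha$, but this does not by itself deliver vertices with color-$i$ neighbors cofinal in $P_i^\alpha$ for the correct $i$, let alone provide such attachment vertices simultaneously for both colors and compatibly with the partition produced by Lemma~\ref{uftrick}. To overcome this I intend to strengthen the inductive hypothesis by maintaining at each stage $\alpha$ a reservoir of candidate attachment vertices in $\omg\setm\delta_\alpha$ with guaranteed cofinal color-$i$ neighborhoods in $P_i^\alpha$, and to push such candidates down into $W_\alpha$ via the elementarity of $M_{\alpha+1}$. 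I also anticipate a separate treatment of the degenerate scenario in which, from some stage on, only one of the two paths can be further extended; there the construction should let the other path stabilize, remaining countable, while the surviving path absorbs all remaining vertices. Coordinating this bookkeeping over $\omg$ stages so as to simultaneously cover every ordinal and respect end-extension is where I expect the main technical effort of the proof to lie.
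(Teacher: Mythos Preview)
Your proposal correctly isolates the crux of the problem---securing attachment vertices at successor stages---but does not resolve it, and the suggested fix via a ``reservoir'' reflected by elementarity cannot work as written. The paths $P_i^\alpha$ are produced during the recursion and are not parameters in $M_{\alpha+1}$; elementarity of $M_{\alpha+1}$ therefore says nothing about the set of $v$ whose color-$i$ neighbourhood is $\prec_{P_i^\alpha}$-cofinal. Applying Lemma~\ref{uftrick} independently on each slice $W_\alpha$ compounds the problem: the partition $A_0\sqcup A_1$ you obtain there bears no relation to the partitions on earlier slices or to the cofinal structure of $P_i^\alpha$, so there is no reason the desired $a_i$ should land in $A_i$, nor even exist in $W_\alpha$ at all. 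Your ``degenerate scenario'' is real, but detecting it cannot be a stage-by-stage decision; it requires a global analysis made before the recursion begins.

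The paper's argument differs precisely here. It opens with a global dichotomy: either some color contains a copy of $H_{\omega_1,\omega_1}$, or (by Lemma~\ref{dich}) every uncountable set is a \emph{trail} in both colors. The notion of trail (Definition~\ref{traildef}) is exactly a club-indexed supply of attachment vertices, and Lemma~\ref{trail} converts an $\omega_1$-connected trail into a path directly. When a monochromatic $H_{\omega_1,\omega_1}$ exists, its main class is extended to a maximal $\omega_1$-linked set $C$; the half-graph structure then furnishes, in every slice, a vertex joined in the correct color to a fixed cofinal anchor set (the main class $A$, respectively $\omega_1\setminus C$) that is maintained cofinal in the growing path as an explicit inductive invariant (condition~(iii) in the paper's Case~1B). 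The slice-by-slice recursion is thus guided by a single global partition fixed in advance, not by fresh ultrafilter partitions on each slice. That global step---and the trail/half-graph dichotomy underlying it---is the idea your outline is missing.
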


\subsection{Further preliminaries}

In the course of the proof we need more definitions.

\begin{definition} Let $\gr$ be a graph, $\kappa$ a cardinal and let $A\subs V$.
We say that $A$ is \emph{$\sat \kappa$} iff there are $\kappa$ many
disjoint finite paths between any two points of $A$. We say that $A$ is
\emph{$\kappa$-connected} iff there are $\kappa$ many disjoint finite
paths inside $A$ between any two points of $A$.
\end{definition}

We will apply this definition with $\kappa=\omega$ or $\omega_1$. We leave the (straightforward) proof of the next observation to the reader:

\begin{obs}\label{connobs} Let $\gr$ be a graph, $\kappa$ an infinite cardinal and let $A\subs V$. The following are equivalent:
\begin{enumerate}
	\item $A$ is $\sat \kappa$ ($\kappa$-connected),
	\item for every $v, w\in A$ and $F\subseteq V\setm\{v,w\}$ of size $<\kappa$ there is a finite path $P$ connecting $v$ and $w$ in $V\setm F$ (in $A\setm F$ respectively).
\end{enumerate}
\end{obs}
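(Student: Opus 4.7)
My plan is to prove the two directions separately, treating the $\lkd/\kappa$-linked case and the $\kappa$-connected case in parallel (the only difference being whether the auxiliary paths live in $V$ or in $A$).

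For the implication (1) $\Rightarrow$ (2), I would argue by a straightforward pigeonhole. Suppose $\{P_\alpha:\alpha<\kappa\}$ is a family of pairwise internally disjoint finite paths between $v$ and $w$ (inside $A$, in the connected version), and let $F\subseteq V\setm\{v,w\}$ have cardinality $<\kappa$. Since the paths share only their endpoints $v,w$, each vertex of $F$ is an internal vertex of at most one $P_\alpha$, so the set $\{\alpha<\kappa: P_\alpha\cap F\ne\emptyset\}$ has size at most $|F|<\kappa$. Hence some (indeed, $\kappa$ many) $P_\alpha$ is disjoint from $F$ and witnesses (2).

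For (2) $\Rightarrow$ (1) I would build the $\kappa$ disjoint paths by transfinite recursion on $\alpha<\kappa$. Fix $v,w\in A$. Assume that internally disjoint finite paths $\{P_\beta:\beta<\alpha\}$ connecting $v$ and $w$ (and, in the connected case, contained in $A$) have been constructed. Set
\[
F_\alpha=\bigcup_{\beta<\alpha}\bigl(P_\beta\setm\{v,w\}\bigr).
\]
Each $P_\beta$ is finite and $|\alpha|<\kappa$, so using that $\kappa$ is an infinite cardinal we obtain $|F_\alpha|\le|\alpha|\cdot\omega<\kappa$. By hypothesis (2) applied to the set $F_\alpha$, there is a finite path $P_\alpha$ from $v$ to $w$ avoiding $F_\alpha$ (lying inside $A\setm F_\alpha$ in the connected version). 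By construction $P_\alpha$ is internally disjoint from every previously chosen $P_\beta$, so the recursion continues and $\{P_\alpha:\alpha<\kappa\}$ is the desired family.

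The only point that requires any care is the cardinal arithmetic used in (2) $\Rightarrow$ (1): one needs that a union of $<\kappa$ many finite sets has cardinality $<\kappa$ whenever $\kappa$ is an infinite cardinal, which holds because $|\alpha|\cdot\omega=\max(|\alpha|,\omega)$ and $\kappa$, being an infinite cardinal, strictly dominates both $|\alpha|$ and $\omega$ (for $\kappa=\omega$ one has $|\alpha|<\omega$ finite, so $F_\alpha$ is finite; for uncountable $\kappa$ one has $\omega<\kappa$ and $|\alpha|<\kappa$). Everything else is bookkeeping, and the argument is symmetric for the two versions of the statement, so I expect no real obstacle.
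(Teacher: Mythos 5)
Your argument is correct: the pigeonhole direction and the transfinite recursion with the cardinal-arithmetic bound $|F_\alpha|\le|\alpha|\cdot\omega<\kappa$ are exactly the routine proof intended here, and the paper itself omits the proof as straightforward ("We leave the (straightforward) proof \dots to the reader"). The only implicit convention you share with the paper is reading ``disjoint paths between $v$ and $w$'' as internally disjoint, which is clearly what is meant.
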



In the construction of a path longer than $\omega$, the difficulty lies in constructing the
 \emph{limit} elements. Definition \ref{traildef} will be crucial in overcoming this difficulty; the idea is first finding \emph{all limit vertices} of the path and then connecting these points appropriately.  

Recall  that a set $\mc V\subs [V]^\omega$ is a \emph{club} (closed and unbounded) iff
\begin{enumerate}
 \item $\bigcup\{V_n:n\in\omega\}\in \mc V$ for every increasing sequence $\{V_n:n\in\omega\}\subs \mc V$, and
\item for all $W\in [V]^\omega$ there is $U\in\mc V$ so that $W\subs U$.
\end{enumerate}

\noindent\emph{Remark.} An easy transfinite induction shows that every club on a set of size $\omg$ contains a club that is a well-ordered \emph{strictly increasing} family of the form $\{V_\alpha:\alpha<\omg\}$. Hence from now on we will tacitly assume that all clubs are of this form.

\begin{definition}\label{traildef}Suppose that $G=(V,E)$ is a graph with $|V| = \omg$. We say that $A\subset V$ is a
 \emph{trail} iff there is a club $\{V_\alpha:\alpha<\omg\} \subset [V]^\omega$ so that 
for all $\alpha<\omg$ there is $v_\alpha\in A\setm V_\alpha$ such that for all $\alpha'<\alpha$
\begin{equation}
 N_G(v_\alpha)\cap (V_\alpha\setm V_{\alpha'})\cap A \text{ is infinite}.
\end{equation}
\end{definition}

\begin{figure}[H]
\centering
\psscalebox{0.8 0.7} 
{
\begin{pspicture}(0,-2.9994395)(14.44,2.9994395)
\psline[linecolor=black, linewidth=0.04](12.82,-0.058148347)(0.02,-0.058148347)(0.02,-1.6581483)(12.82,-1.6581483)
\psbezier[linecolor=black, linewidth=0.04](0.42,1.9839051)(1.3279594,2.4029632)(5.62,1.583905)(5.62,-0.41609496)(5.62,-2.416095)(1.3279594,-3.2351532)(0.42,-2.8160949)
\psbezier[linecolor=black, linewidth=0.04](6.42,2.383905)(7.3686833,2.7001328)(11.62,1.183905)(11.22,-0.816095)(10.82,-2.8160949)(6.05205,-3.3707952)(5.22,-2.8160949)
\psdots[linecolor=black, dotsize=0.3](12.42,-0.85814834)
\psdots[linecolor=black, dotsize=0.2](2.42,-0.85814834)
\psdots[linecolor=black, dotsize=0.2](6.42,-0.85814834)
\psdots[linecolor=black, dotsize=0.2](7.22,-0.85814834)
\psdots[linecolor=black, dotsize=0.14](9.62,-0.85814834)
\psdots[linecolor=black, dotsize=0.09](10.02,-0.85814834)
\psdots[linecolor=black, dotsize=0.07](10.42,-0.85814834)
\psbezier[linecolor=black, linewidth=0.04, linestyle=dashed, dash=0.17638889cm 0.10583334cm](12.42,-0.85814834)(12.82,-0.45814836)(10.02,1.5418516)(7.22,1.5418516)(4.42,1.5418516)(2.82,-0.058148347)(2.42,-0.85814834)
\psbezier[linecolor=black, linewidth=0.04, linestyle=dashed, dash=0.17638889cm 0.10583334cm](12.42,-0.85814834)(12.02,-0.058148347)(10.02,1.1418517)(8.42,1.1418517)(6.82,1.1418517)(6.42,-0.45814836)(6.42,-0.85814834)
\psbezier[linecolor=black, linewidth=0.04, linestyle=dashed, dash=0.17638889cm 0.10583334cm](12.42,-0.85814834)(12.02,-0.45814836)(10.42,0.34185165)(9.22,0.34185165)(8.02,0.34185165)(7.22,-0.45814836)(7.22,-0.85814834)
\psbezier[linecolor=black, linewidth=0.04, linestyle=dashed, dash=0.17638889cm 0.10583334cm](12.42,-0.85814834)(12.02,-0.45814836)(10.82,-0.45814836)(10.42,-0.45814836)(10.02,-0.45814836)(9.62,-0.85814834)(9.62,-0.85814834)
\rput[bl](13.22,-0.85814834){\large $v_\alpha$}
\rput[bl](8.5,2.2){\Large $V_\alpha$}
\rput[bl](1.62,2.2){\Large $V_{\alpha'}$}
\rput[bl](0.02,0.7){\Large $A$}
\end{pspicture}
}\caption{Trails.}
\label{trailfig}
\end{figure}
\vspace{0.2 cm}

An important example of a path is the graph $\halff \omg$ i.e. $(\omg\times 2,E)$ where
\begin{displaymath}
E=\big\{\ \{({\alpha},0),({\beta},1)\}: {\alpha}\le {\beta}<{\omg}\ \big\}. 
\end{displaymath}

 $\halff \omg$ is a bipartite graph and we call the set of vertices in
 $\halff\omg$ with degree $\omg$, (that is, $\omega_1 \times \{0\}$) \emph{the
   main class of $\halff \omg$}.

\begin{obs}\label{trailobs}The main class of $\half$ is $\omega_1$-linked
  and is a trail (indeed, let $V_\alpha = \omega\alpha \times 2$). 

If $G$ is any graph and $A\subs V(G)$ is a 
trail then
 \begin{enumerate} 
\item $C$ is a trail for any $A\subs C\subs V(G)$,
\item $C\setm B$ is a trail for any $B\in [C]^\omega$,
\item if $\{W_\alpha:\alpha<\omg\} \subset \{V_\alpha:\alpha<\omg\}$ are clubs and $\{V_\alpha:\alpha<\omg\}$ witnesses that $A$ is a trail then so does $\{W_\alpha:\alpha<\omg\}$. 
 \end{enumerate}
\end{obs}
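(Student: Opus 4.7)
The statement bundles four essentially independent verifications, each of which should follow directly from the definitions. My plan is to handle the half-graph example first and then dispatch the three closure properties by recycling the witnesses supplied by the hypotheses with at most a mild shift or thinning of the club.

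For the main class $A=\omg\times\{0\}$ of $\halff\omg$, $\omega_1$-linkedness follows at once from Observation \ref{connobs}: given distinct $(\alpha,0),(\beta,0)\in A$ and any $F\subs\omg\times 2$ with $|F|<\omg$, I pick some $\gamma\ge\max(\alpha,\beta)$ with $(\gamma,1)\notin F$, which is possible because there are $\omg$ such $\gamma$ but fewer than $\omg$ forbidden points; then $(\alpha,0),(\gamma,1),(\beta,0)$ is a finite path connecting the two vertices through $V\setm F$. For the trail property, I will verify that $\{V_\alpha=\omega\alpha\times 2:\alpha<\omg\}$ is a strictly increasing, continuous chain of countable sets with union $\omg\times 2$, hence a club in $[\omg\times 2]^\omega$, and then produce for each $\alpha<\omg$ an appropriate $v_\alpha$ by exploiting the half-graph edge relation $(\delta,0)\sim(\varepsilon,1)\iff\delta\le\varepsilon$, which yields infinitely many main-class neighbors inside each band $V_\alpha\setm V_{\alpha'}$.

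For the three closure claims the guiding idea is that the same club, after a shift in (2) and a thinning in (3), continues to work. In (1) the witnesses $\{V_\alpha\},\{v_\alpha\}$ for $A$ immediately witness that any $C\supseteq A$ is a trail, since $v_\alpha\in A\subs C$ and
\[
N_G(v_\alpha)\cap(V_\alpha\setm V_{\alpha'})\cap C\supseteq N_G(v_\alpha)\cap(V_\alpha\setm V_{\alpha'})\cap A
\]
remains infinite. In (2), since $B\in[C]^\omega$ is countable, the club property of $\{V_\alpha\}$ supplies some $\alpha_0<\omg$ with $B\subs V_{\alpha_0}$; setting $W_\beta=V_{\alpha_0+\beta}$ and $w_\beta=v_{\alpha_0+\beta}$ yields a shifted club and witnesses, with $w_\beta\notin W_\beta\supseteq B$ and hence $w_\beta\in C\setm B$, while for every $\beta'<\beta$ the set $W_\beta\setm W_{\beta'}$ is disjoint from $B$ (because $V_{\alpha_0+\beta'}\supseteq V_{\alpha_0}\supseteq B$), so the required infinite neighborhood automatically lies in $C\setm B$. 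In (3), given a sub-club $W_\alpha=V_{f(\alpha)}$ for some strictly increasing $f:\omg\to\omg$, setting $w_\alpha=v_{f(\alpha)}$ transfers each instance of the trail condition verbatim, because $\alpha'<\alpha$ implies $f(\alpha')<f(\alpha)$.

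I do not anticipate any serious obstacle: every piece is a direct unpacking of the definitions, with the only delicate moment being the shift by $\alpha_0$ in (2) that absorbs the countable bad set $B$ into the initial segment of the club.
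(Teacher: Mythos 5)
The paper states this observation without proof, and your handling of the $\omega_1$-linkedness claim and of items (1)--(3) is correct and essentially forced: (1) is monotonicity of the displayed intersection in $A\subs C$, (2) follows by shifting the club past an $\alpha_0$ with $B\subs V_{\alpha_0}$ so that both the witness and the bands $W_\beta\setm W_{\beta'}$ avoid $B$, and (3) is a verbatim transfer along the increasing reindexing $f$. These parts are fine.

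There is, however, a genuine gap in your proof that the main class $A=\omg\times\{0\}$ is a trail, at exactly the point your sketch glosses over. Definition \ref{traildef} requires the witness to satisfy $v_\alpha\in A\setm V_\alpha$, i.e.\ to lie in the main class itself. But $\halff{\omg}$ is bipartite with $A$ as one side, so $N(v)\cap A=\empt$ for every $v\in A$, and hence the required set $N(v_\alpha)\cap(V_\alpha\setm V_{\alpha'})\cap A$ is empty for \emph{every} admissible $v_\alpha$ and every club --- no choice can work. Your own phrase ``infinitely many main-class neighbors'' already betrays this: a vertex with main-class neighbors is necessarily a second-class vertex $(\varepsilon,1)$ with $\varepsilon\ge\omega\alpha$, which is not in $A$. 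So the claim as literally stated is not provable; what your computation actually establishes is either that $V(\halff{\omg})$ is a trail (take $A=\omg\times 2$ and $v_\alpha=(\omega\alpha,1)$), or that $A$ is a trail under the relaxed reading $v_\alpha\in V\setm V_\alpha$ --- which is how the paper itself tacitly uses the notion (the elementarity step in the proof of Lemma \ref{ellemma}(3) asserts the existence of $v\in V\setm V_\alpha$, not $v\in A\setm V_\alpha$). You must state explicitly which reading you adopt; and if you adopt the relaxed one, items (1) and (2) need a sentence tracking where the witnesses live, since downstream (Case 1A of Theorem \ref{2col}) the trail is fed into Lemma \ref{trail}, whose construction requires the witnesses to belong to the set being organized into a path.
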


We will make use of the following lemma regularly but the reader should feel free to skip the proof when first working through this section.

\begin{lemma} \label{ellemma} Let $\gr$ be a graph with $|V|=\omg$, and let $A\subseteq V$ be uncountable. Then there is a club $\{V_\alpha:\alpha<\omg\}$ of $V$ such that 
\begin{enumerate}
	\item $V_\alpha$ is an initial segment of $\omg$ and if $\xi\in V_\alpha$ then $\xi+1\in V_\alpha$ as well,
	\item if $A$ is $\sat \omg$ ($\omg$-connected) then $A\cap (V_{\alpha+1}\setm V_{\alpha})$ is infinite and $\sat \omega$ ($\omega$-connected) in $V_{\alpha+1}\setm V_{\alpha}$,
	\item if $A$ is a trail then $\{V_\alpha:\alpha<\omg\}$ witnesses this and, using the notation of Definition \ref{traildef}, the node $v_\alpha$ can be chosen in $V_{\alpha+1}\setm V_{\alpha}$.
\end{enumerate}
\end{lemma}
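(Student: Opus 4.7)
The plan is to produce $\{V_\alpha\}$ as the traces on $\omg$ of a continuous $\in$-chain of countable elementary submodels. Identifying $V$ with $\omg$, fix a sufficiently large regular cardinal $\theta$ and build $\{M_\alpha:\alpha<\omg\}$ with each $M_\alpha\prec H_\theta$ countable, $M_\alpha\in M_{\alpha+1}$, $M_\lambda=\bigcup_{\alpha<\lambda}M_\alpha$ at limits, and each $M_\alpha$ containing $G$ and $A$ and, in the case $A$ is a trail, a fixed witnessing club $\{W_\beta\}$ (taken continuous and strictly increasing in accordance with the tacit convention of the paper) together with the witnesses $\{v_\beta^W\}$. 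Put $V_\alpha=\gamma_\alpha:=M_\alpha\cap\omg$. Then (1) is immediate because $\gamma_\alpha$ is a countable limit ordinal.

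For (2), suppose $A$ is $\sat\omg$; the $\omg$-connected case is identical, except that the paths are required to lie in $A$. The uncountable set $A\setm\gamma_\alpha$ lies in $M_{\alpha+1}$, so by elementarity $M_{\alpha+1}$ contains infinitely many of its elements, which necessarily lie in $V_{\alpha+1}\setm V_\alpha$. For the $\omega$-linkedness inside $V_{\alpha+1}\setm V_\alpha$, fix $v,w\in A\cap(V_{\alpha+1}\setm V_\alpha)$ and finite $F\subset (V_{\alpha+1}\setm V_\alpha)\setm\{v,w\}$, and note $v,w,F,\gamma_\alpha\in M_{\alpha+1}$. In $H_\theta$ there are $\omg$ pairwise disjoint finite $v$-$w$ paths, of which $\omg$ avoid the countable set $F\cup\gamma_\alpha$. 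By elementarity the same holds inside $M_{\alpha+1}$, and each such path is a finite element of $M_{\alpha+1}$, hence a subset of $M_{\alpha+1}\cap V=V_{\alpha+1}$, so it lies inside $V_{\alpha+1}\setm V_\alpha$ and avoids $F$. We obtain infinitely many disjoint such paths, which by Observation \ref{connobs} gives $\omega$-linkedness of $A\cap(V_{\alpha+1}\setm V_\alpha)$ in $V_{\alpha+1}\setm V_\alpha$.

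For (3) the central point is the identification $W_{\gamma_\alpha}=\gamma_\alpha$. If $\beta<\gamma_\alpha$ then $\beta\in M_\alpha$, hence $W_\beta\in M_\alpha$ and by countability $W_\beta\subseteq M_\alpha\cap V=\gamma_\alpha$; conversely, any $\xi\in\gamma_\alpha$ lies in some $W_\beta$ by cofinality of $\{W_\beta\}$, and elementarity lets us pick this $\beta$ inside $M_\alpha$, so $\beta<\gamma_\alpha$. Therefore $\gamma_\alpha=\bigcup_{\beta<\gamma_\alpha}W_\beta$, and continuity of $\{W_\beta\}$ at the limit $\gamma_\alpha$ delivers $W_{\gamma_\alpha}=\gamma_\alpha$. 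Now take $v_\alpha:=v_{\gamma_\alpha}^W$: it lies in $A\setm W_{\gamma_\alpha}=A\setm V_\alpha$, and $v_\alpha\in M_{\alpha+1}$ since $\gamma_\alpha\in M_{\alpha+1}$, so $v_\alpha\in V_{\alpha+1}\setm V_\alpha$. The identical argument for each $\alpha'<\alpha$ gives $W_{\gamma_{\alpha'}}=\gamma_{\alpha'}$, so the trail condition for $\{W_\beta\}$ at index $\gamma_\alpha$ translates verbatim into the desired trail condition for $\{V_\alpha\}$ witnessed by $v_\alpha$.

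The step I expect to be the main obstacle is the compatibility in (3): a priori a witnessing club $\{W_\beta\}$ need not consist of initial segments of $\omg$, so one might fear that the ordinals $\gamma_\alpha$ produced by the submodel chain fail to appear among the $W_\beta$ and that the witnesses $v_\beta^W$ land outside the right interval. The closure identity $\gamma_\alpha=\bigcup_{\beta<\gamma_\alpha}W_\beta$ together with continuity of $\{W_\beta\}$ forces $W_{\gamma_\alpha}=\gamma_\alpha$ tautologically, and once this is observed, the rest of the verification is routine.
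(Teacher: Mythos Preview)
Your proof is correct and follows essentially the same approach as the paper: both build $\{V_\alpha\}$ as the traces of a continuous $\in$-chain of countable elementary submodels containing $G$, $A$, and (for part (3)) a witnessing club $\mathcal W$, and both verify (2) by reflecting the existence of a path avoiding $F\cup V_\alpha$ into $M_{\alpha+1}$. The only cosmetic differences are that the paper picks $\mathcal W\in M_1$ by elementarity rather than fixing it in advance, and for (3) simply asserts $\mathcal V\subseteq\mathcal W$ and then uses elementarity of $M_{\alpha+1}$ to find a suitable $v_\alpha$, where you instead compute $W_{\gamma_\alpha}=\gamma_\alpha$ explicitly and take $v_\alpha=v^W_{\gamma_\alpha}$; these amount to the same argument.
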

\begin{proof} Let $\mc M=\{M_\alpha:0<\alpha<\omg\}$ be an $\in$-chain of countable elementary submodels of $H(\omega_2)$ such that $G,A,\prec \in M_1$, $V\subseteq \bigcup \mc M$ and let $M_0=\emptyset$. Let $V_\alpha=V\cap M_\alpha$ for $\alpha<\omg$. We claim that $\mc V=\{V_\alpha:\alpha<\omg\}$ is a club which satisfies the above conditions.

First, $\mc V$ is a club as $\mc M$ is a continuous chain and $V\subseteq \bigcup \mc M$. Condition (1) is satisfied by elementarity.

Now, suppose that $A$ is $\sat \omg$ and fix $\alpha<\omg$. Also, fix $v,w\in
V_{\alpha+1}\setm V_{\alpha}$ and a finite set $F\subs V_{\alpha+1}\setm
V_{\alpha}$. We prove that there is a path form $v$ to $w$ in
$V_{\alpha+1}\setm (V_{\alpha}\cup F)$; this implies that $A\cap
(V_{\alpha+1}\setm V_{\alpha})$ is $\sat \omega$ by Observation
\ref{connobs}. As $A$ is $\sat \omg$ we have that $$H(\omega_2)\models \text{
  there is a finite path from } v \text{ to }w \text{ in } V\setm(V_\alpha\cup
F).$$ Hence, by elementarity of $M_{\alpha+1}$ and by $V_\alpha, F,v,w\in
M_{\alpha+1}$ we have $$M_{\alpha+1}\models \text{ there is a finite path from
} v \text{ to }w \text{ in } V\setm(V_\alpha\cup F).$$ We choose any such path
$P$ in $M_{\alpha+1}$ and so we have $P\subseteq
V_{\alpha+1}\setm(V_\alpha\cup F)$ as desired. The case when $A$ is
$\omg$-connected is completely analogous.

Finally, suppose that $A$ is a trail. By elementarity, since $A \in
M_1$, there is a club
$\mc W=\{W_\alpha:\alpha<\omg\}\in M_1$ which witnesses that $A$ is a
trail. First, it is easy to see that $\mc V\subseteq \mc W$ and in particular,
$\mc V$ witnesses that $A$ is a trail. Second, the node $v_{\alpha}\in V\setm
V_{\alpha}$ can be selected in $V_{\alpha+1}$ as $V_\alpha \in M_{\alpha+1}$
and
\begin{equation}
M_{\alpha+1}\models \text{ there is } v\in V\setm V_\alpha \text{ such that }|N_G(v)\cap (V_\alpha\setm V_{\alpha'})\cap A|=\omega \text{ for all } \alpha'<\alpha.
\end{equation}
This finishes the proof of the lemma.
\end{proof}

Finally, we state the obvious extension of Lemma \ref{uftrick}.

\begin{lemma}\label{uftrick2} Given any edge coloring $c: [\kappa]^2 \to \{0, 
\dots, r-1 \}$ of the complete graph on $\kappa$ (where $\kappa\geq\omega$), 
there is a function  $d_c: \kappa \to \{0, \dots, r-1 \}$ and a color $i(c) < r$
so that  the sets 
$V_i=d_c^{-1}\{i\}$ satisfy:
$$|N[F,i]\cap V_{i(c)}|=\kappa \text{ for all } i < r \text{ and finite set } 
F\subs V_i.$$

In particular, $V_i$ is $\sat \kappa$ in color $i$ for all $i < r$ and 
$V_{i(c)}$
is $\kappa$-connected in color $i(c)$. 
\end{lemma}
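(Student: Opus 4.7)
The plan is to mimic the proof of Lemma \ref{uftrick} verbatim, replacing the non-principal ultrafilter on $\NN$ with a \emph{uniform} ultrafilter $U$ on $\kappa$ --- that is, an ultrafilter satisfying $|A|=\kappa$ for every $A\in U$. Such a $U$ exists: I would extend the filter $\{A\subseteq\kappa:|\kappa\setminus A|<\kappa\}$ to an ultrafilter via Zorn's lemma, after which uniformity is automatic, since any $A$ with $|A|<\kappa$ has its complement already in the initial filter.

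With such a $U$ fixed, the construction is the obvious generalisation. For each $v\in\kappa$ the sets $\{N(v,i):i<r\}$ partition $\kappa\setminus\{v\}\in U$, so exactly one index $i<r$ satisfies $N(v,i)\in U$; define $d_c(v)$ to be that index and set $V_i=d_c^{-1}\{i\}$. Because $\{V_i:i<r\}$ partitions $\kappa$ and $U$ is an ultrafilter, a unique $V_i$ lies in $U$, and $i(c)$ is by definition its index.

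To verify the displayed cardinality condition, I would fix $i<r$ and a finite set $F\subseteq V_i$. For every $v\in F$ the choice of $d_c$ gives $N(v,i)\in U$, so $N[F,i]=\bigcap_{v\in F}N(v,i)$ lies in $U$ as a finite intersection of $U$-sets, and therefore so does $N[F,i]\cap V_{i(c)}$; uniformity of $U$ then forces this set to have cardinality $\kappa$. The case $F=\emptyset$ is handled by the convention $N[\emptyset,i]=V$ together with $V_{i(c)}\in U$.

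For the ``in particular'' clause, I would argue just as in the countable case and appeal to Observation \ref{connobs}. Given distinct $v,w\in V_i$ and any $F\subseteq V\setminus\{v,w\}$ with $|F|<\kappa$, the set $N(v,i)\cap N(w,i)$ is in $U$ and so has cardinality $\kappa$, hence meets $V\setminus F$; any $u$ in that intersection yields a path $vuw$ of colour $i$ from $v$ to $w$ avoiding $F$, so $V_i$ is $\sat\kappa$ in colour $i$. When $i=i(c)$ the same intersection further cut down by $V_{i(c)}$ is still in $U$, so the witness $u$ can be taken inside $V_{i(c)}$, which yields that $V_{i(c)}$ is $\kappa$-connected in colour $i(c)$. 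There is no genuine obstacle: the only role of ``non-principal'' in Lemma \ref{uftrick} was to supply infinitely many witnesses, and uniformity of $U$ is exactly the analogue that supplies $\kappa$-many.
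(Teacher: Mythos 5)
Your proposal is correct and follows exactly the paper's route: the paper's own proof consists of the single instruction to repeat the proof of Lemma \ref{uftrick} with a uniform ultrafilter on $\kappa$ in place of a non-principal one, which is precisely what you do (including the standard existence argument via extending the filter of sets with small complement). The detailed verification of the cardinality condition and of the ``in particular'' clause matches the countable case word for word, with uniformity supplying $\kappa$-many witnesses where non-principality supplied infinitely many.
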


\begin{proof}
Repeat tho proof of Lemma \ref{uftrick} but choose the ultrafilter $U$ on $\kappa$ to be uniform, that is, $|H| = \kappa$ for every $H \in U$. 
\end{proof}

\subsection{Towards the proof of Theorem \ref{2col}}

The following two lemmas express the connection between trails, $\sat \omg$ sets and paths:

\begin{lemma}\label{pathlemma}
 Every path of order type $\omg$ is a trail and contains an uncountable $\sat \omg$ subset.
\end{lemma}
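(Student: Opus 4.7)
My plan is to split the lemma into the two assertions: (i) that $V(P)$ is a trail, and (ii) that $V(P)$ contains an uncountable $\omega_1$-linked subset.

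For part (i), I would work directly with the well-ordering $\prec$ of order type $\omega_1$ witnessing that $P$ is a path, enumerating $V(P)=\{v_\beta:\beta<\omega_1\}$ in $\prec$-order. The natural club to use is obtained from a continuous increasing cofinal sequence of countable limit ordinals; concretely set $\gamma_\alpha=\omega(1+\alpha)$ and $V_\alpha=\{v_\beta:\beta<\gamma_\alpha\}$. Then $\{V_\alpha:\alpha<\omega_1\}$ is a strictly increasing club of countable initial segments of $V(P)$. For the witness required by Definition \ref{traildef} (with $A=V(P)$) I would pick $v_\alpha^\star:=v_{\gamma_\alpha}$. Because $\gamma_\alpha$ is a limit ordinal, the defining property of a path forces $\{v_\beta:\beta<\gamma_\alpha\}\cap N_P(v_\alpha^\star)$ to be $\prec$-cofinal in $\gamma_\alpha$. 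Thus for any $\alpha'<\alpha$ the cofinal set has points arbitrarily close to $\gamma_\alpha$ from above $\gamma_{\alpha'}$, so $N_P(v_\alpha^\star)\cap(V_\alpha\setminus V_{\alpha'})$ is infinite. This gives the trail property immediately.

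For part (ii), the strategy is to take an uncountable candidate set coming from the same club, namely $A=\{v_\alpha^\star:\alpha<\omega_1\}$ (or, if convenient, the slightly larger set of all limit-index vertices), and then show it is $\omega_1$-linked via the reformulation in Observation \ref{connobs}: given $v_\alpha^\star,v_\beta^\star\in A$ with $\alpha<\beta$ and a countable $F\subseteq V(P)\setminus\{v_\alpha^\star,v_\beta^\star\}$, produce a finite path in $V(P)\setminus F$ connecting them. I would proceed by transfinite induction on $\beta$: at the inductive step I use that the set of below-neighbors of $v_\beta^\star$ in $V_\beta\setminus V_\alpha$ is infinite (established in part (i)), hence hits $V(P)\setminus F$; pick one such below-neighbor $v$ and apply the inductive hypothesis to $v_\alpha^\star$ and $v$, with the avoidance set augmented by the finitely many points already used, to join them by a finite path in $V(P)\setminus F$.

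The main obstacle is that a single cofinal $\omega$-sequence of below-neighbors may be entirely swallowed by the countable set $F$. To overcome this, at each recursion step I would choose the below-neighbor $v$ of $v_\beta^\star$ not just to lie outside $F$, but also to be a limit vertex of $P$ itself, using Observation \ref{monpathobs} to propagate the recursion downward in a $\prec$-monotone manner; this ensures that the recursion descends along points whose own below-neighborhoods remain cofinal, so that the "avoid countable $F$" requirement can be maintained uniformly. Carrying this recursion out carefully — keeping track of the finite avoidance at each stage and terminating once the current target falls into the already-constructed tail — yields the required finite path, which proves that $A$ is $\omega_1$-linked and completes the proof.
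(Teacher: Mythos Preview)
Your argument for part (i) is correct and is essentially the paper's argument made explicit: the paper invokes Lemma \ref{ellemma} to produce the club of initial segments, while you write it down by hand via $\gamma_\alpha=\omega(1+\alpha)$. Either way, the point is that the $\prec$-least element above each $V_\alpha$ is a $\prec$-limit, so the path condition forces cofinally many below-neighbours in every tail $V_\alpha\setminus V_{\alpha'}$.

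Part (ii), however, has a genuine gap. The set $A=\{v_\alpha^\star:\alpha<\omega_1\}$ (or the set of all limit-index vertices) need \emph{not} be $\omega_1$-linked, because a limit-index vertex can have only countably many neighbours in $P$. Concretely, build a path on $\omega_1$ with the ordinal order by taking all successor edges $\{\alpha,\alpha+1\}$ and, for each limit $\lambda$, edges $\{\alpha_n^\lambda,\lambda\}$ along a cofinal sequence of \emph{successor} ordinals $\alpha_n^\lambda\nearrow\lambda$. Then $v_0^\star=\omega$ has neighbour set $N_P(\omega)=\{\omega+1\}\cup\{\alpha_n^\omega:n<\omega\}$, which is countable. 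Taking $F=N_P(\omega)$ blocks every path out of $\omega$, so no finite path from $\omega$ to, say, $\omega\cdot 2$ avoids $F$. Your proposed fix (pick a limit-index below-neighbour outside $F$) cannot work here, since \emph{all} neighbours of $\omega$ lie in $F$; the transfinite induction never gets off the ground.

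The paper avoids this by choosing a different candidate set: $A=\{v\in V(P):|N_P(v)|=\omega_1\}$. Fodor's pressing-down lemma (applied to the map sending each limit $\alpha$ to an index of a below-neighbour of $p_\alpha$) shows $A$ is uncountable. The $\omega_1$-linked property is then immediate: given $v,w\in A$ and countable $F$, pick $v'\in N_P(v)$ and $w'\in N_P(w)$ that are $\prec$-above all of $F$, and join $v'$ to $w'$ by a $\prec$-monotone path via Observation \ref{monpathobs}; this path stays above $F$. The key idea you are missing is that uncountable degree is exactly what lets you escape a countable obstruction set.
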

\begin{proof}  Suppose that $P$ is a path of order type $\omg$ witnessed by the well ordering $\prec$. Now, by Lemma \ref{ellemma}, there is a club $\{V_\alpha:\alpha<\omg\}$ of vertices of $P$ such that $V_\alpha$ is a $\prec$-initial segment, $v\in V_\alpha$ implies that the $\prec$-successor of $v$ is also in $V_\alpha$ and $V_\alpha\subs V_\beta$ for all $\alpha<\beta<\omg$. Let $v_\alpha$ denote the $\prec$-minimal element of $V\setm V_\alpha$.
In order to prove that $P$ is a trail it suffices to show that 
\begin{claim}
 $N_P(v_\alpha)\cap (V_\alpha\setm V_{\alpha'})$ is infinite for all
$\alpha'<\alpha<\omg$.
\end{claim}
\begin{proof} First, note that $v_\alpha$ is a $\prec$-limit. Fix $\alpha'<\alpha$. $V_{\alpha'}$ is an initial segment of the path $P$ and has minimal bound $v_{\alpha'}$. Note that $v_{\alpha'}\prec v_\alpha$. By the definition of a path, the set $\{w\in N_P(v_\alpha):v_{\alpha'}\prec w\prec v_\alpha\}$ is infinite and it is clearly a subset of $N_P(v_\alpha)\cap V_\alpha\setm V_{\alpha'}$ by the choice of $v_{\alpha'}$ and $v_\alpha$. 
\end{proof}

 Second, we prove

\begin{claim}
 The set $A=\{v\in V(P): |N_P(v)|=\omega_1\}$ is uncountable and $\sat \omg$.
\end{claim}
\begin{proof}
First, it suffices to show that there is a single vertex $v$ with uncountable
degree in $P$ as
 every end segment of $P$ is also a path of order type $\omg$. Let $\{p_\alpha:\alpha<\omg\}$ enumerate $P$ according to the path well order $\prec$. Now, for every
limit $\alpha<\omg$
 there is $\mu_\alpha<\alpha$ so that $\{p_\alpha,p_{\mu_\alpha}\}\in E(P)$;
Fodor's pressing down 
lemma gives a stationary set $S\subs \omg$ and $\mu\in \omg$ so that
$\{p_\alpha,p_{\mu}\}\in E(P)$
 if $\alpha\in S$, that is, the degree of $p_\mu$ in $P$ is uncountable.

Now take any two distinct vertices, $v$ and $w$, in $A$ and fix an
arbitrary countable set $F\subs V(P)\setm\{v,w\}$. We will find a finite path from $v$ to $w$ in $V(P)\setm F$. There is $v'\in N_P(v)$ and $w'\in
N_P(w)$ so that both $v'$
 and $w'$ are $\prec$-above all elements of $F$ as $v,w\in A$ and $|F|\leq \omega$.
Now, there is a finite 
$\prec$-monotone path $Q$ between $v'$ and $w'$ by Observation
\ref{monpathobs}; $Q$ must avoid $F$ and hence the path $(v)\smf Q\smf (w)$
connects $v$ and $w$ in $V(P)\setm F$. By Observation \ref{connobs}, $A$ must
be $\sat {\omega_1}$.
\end{proof}

\end{proof}

Now, we show that the converse of Lemma \ref{pathlemma} is true as well:

\begin{lemma}\label{trail} Suppose that $\gr$ is a graph with $|V| = \omg$.
 If $V$ is an $\omg$-connected trail then $G$ is a path.
\end{lemma}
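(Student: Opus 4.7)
The plan is to build the required well-ordering by transfinite recursion, using the $v_\alpha$'s coming from the trail property as the ``limit points'' of the path (at positions $\omega\cdot\alpha$), and filling the gaps between them with $\omega$-long one-way infinite paths inside single layers of a suitable club.

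First, I would apply Lemma \ref{ellemma} with $A=V$ to obtain one club $\{V_\alpha:\alpha<\omega_1\}$ that simultaneously witnesses both properties: each difference $V_{\alpha+1}\setminus V_\alpha$ is infinite and $\omega$-connected in the induced subgraph, and there are vertices $v_\alpha\in V_{\alpha+1}\setminus V_\alpha$ such that $N_G(v_\alpha)\cap(V_\alpha\setminus V_{\alpha'})$ is infinite for every $\alpha'<\alpha$. (Take $V_0=\emptyset$.)

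Then, by recursion on $0<\alpha<\omega_1$, I would construct paths $P_\alpha$ with well orderings $\prec_\alpha$ so that
\begin{itemize}
\item[(a)] $V(P_\alpha)=V_\alpha$ and the order type of $(V_\alpha,\prec_\alpha)$ is $\omega\cdot\alpha$;
\item[(b)] $P_\beta$ end-extends $P_\alpha$ for all $\alpha<\beta$;
\item[(c)] for every $\alpha'<\alpha$, the $\prec_\alpha$-tail strictly above the vertex occupying position $\omega\cdot\alpha'$ is exactly $V_\alpha\setminus V_{\alpha'}$;
\item[(d)] for every successor step $\alpha+1$, the first point of $P_{\alpha+1}\setminus P_\alpha$ is $v_\alpha$.
\end{itemize}
For the base case, since $V_1$ is $\omega$-connected in itself, Corollary \ref{trivpath} supplies a one-way infinite path covering it, which I take as $P_1$. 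At a limit $\alpha$, I set $P_\alpha=\bigcup_{\beta<\alpha}P_\beta$; continuity of the club gives $V(P_\alpha)=V_\alpha$, and end-extensions of paths are paths, so $P_\alpha$ is a path of order type $\sup_{\beta<\alpha}\omega\cdot\beta=\omega\cdot\alpha$, preserving (a)--(c).

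The one substantive step is the successor stage $\alpha+1$. The choice of $v_\alpha$ from the trail, together with (c), immediately says that $v_\alpha$ has infinitely many $\prec_\alpha$-neighbors above every $\omega\cdot\alpha'$, hence cofinally in $P_\alpha$ (which has cofinality $\omega$); so appending $v_\alpha$ at the top of $P_\alpha$ produces a path. It remains to extend this by a one-way infinite $\prec$-continuation that exhausts $V_{\alpha+1}\setminus V_\alpha$ and stays inside that set, starting at $v_\alpha$. Since $V_{\alpha+1}\setminus V_\alpha$ is infinite and $\omega$-linked in the induced subgraph, I can apply Corollary \ref{trivpath} there (or directly the ``moreover'' clause of Lemma \ref{simpath}) to get precisely such a path starting at $v_\alpha$. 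This preserves (a)--(d). Finally, $P=\bigcup_{\alpha<\omega_1}P_\alpha$ is then the desired path on $V$ of order type $\omega_1$.

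The only part that might look delicate is ensuring condition (c), i.e.\ that the tail above position $\omega\cdot\alpha'$ is \emph{exactly} $V_\alpha\setminus V_{\alpha'}$, because this is what converts the trail condition ``infinitely many neighbors in $V_\alpha\setminus V_{\alpha'}$'' into ``cofinally many neighbors in $P_\alpha$''. But (c) is automatic from (b) and the fact that at each successor stage the new vertices are drawn from $V_{\alpha+1}\setminus V_\alpha$ only, so no real obstacle arises; the whole argument is essentially a bookkeeping exercise on top of Lemma \ref{ellemma} and Corollary \ref{trivpath}.
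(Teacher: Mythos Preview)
Your proposal is correct and follows essentially the same approach as the paper's proof: both invoke Lemma~\ref{ellemma} to get a single club serving double duty, use the trail vertices $v_\alpha$ as the limit points of the path, and fill each block $V_{\alpha+1}\setminus V_\alpha$ by an $\omega$-type path obtained from Corollary~\ref{trivpath} (with the ``moreover'' clause of Lemma~\ref{simpath} to pin the first vertex). The only cosmetic difference is bookkeeping: the paper sets $P_\alpha = V_\alpha\cup\{v_\alpha\}$ with $v_\alpha$ as the last point, whereas you set $P_\alpha=V_\alpha$ and place $v_\alpha$ as the first point of the next block; your variant actually avoids the slight notational overlap in the paper's $P_\alpha{}^\frown R$ where $v_\alpha$ appears in both pieces.
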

\begin{proof}Fix a club $\{V_\alpha:\alpha<\omg\}$ as in Lemma \ref{ellemma} and pick nodes $v_\alpha\in V_{\alpha+1}\setm V_\alpha$ showing that $V$ is a trail.

It suffice to construct sets $P_\alpha\subs V$ and orderings 
$\prec_\alpha$ for $\alpha<\omg$ 
so that
\begin{enumerate}[(i)]
\item $(P_\alpha,\prec_\alpha)$ is a path with last point $v_\alpha$,
\item $P_\alpha=V_\alpha\cup \{v_\alpha\}$,
\item $P_\beta$ end extends $P_\alpha$ for $\alpha<\beta<\omg$.
\end{enumerate}
Indeed, the ordering $\bigcup\{\prec_\alpha:\alpha<\omg\}$ on $V$ will witness that $G$ is a path.

First, we set $P_0=\{v_0\}$. Next, apply Corollary \ref{trivpath} to find a 
path $R$ of order type $\omega$ on vertices $V_1$ with first point $v_0$; this can be done as $V_1$ is $\omega$-connected.  We let $P_1=R^\frown (v_1)$ and note that $P_1$ is a path as the infinite set $N_G(v_1)\cap V_1$ is cofinal in $R$ and hence below $v_1$.

In general, suppose that we have constructed $P_\alpha$ for $\alpha<\beta$ as above. 
If $\beta$ is a 
limit then let $P_{<\beta}=\bigcup\{P_\alpha:\alpha<\beta\}$; note that
 $P_{<\beta}=V_\beta$ is a path.
 It suffices to prove 

\begin{obs}
 $P_\beta=P_{<\beta}\hspace{0.01 cm}^\frown (v_\beta)$ is a path. 
\end{obs}

Indeed, we know that $N_G(v_\beta)\cap (V_\beta\setm V_{\alpha})$ is infinite for all $\alpha<\beta$ by 
the definition of $v_\beta$.

If $\beta=\alpha+1$ then we apply Lemma \ref{trivpath} to find a path $R$ of order type $\omega$ on 
 vertices $V_{\alpha+1}\setm V_\alpha$ with first point $v_\alpha$; see Figure \ref{extfig}.

\begin{figure}[H]
\centering

\psscalebox{0.9 0.9} 
{
\begin{pspicture}(-1.5,-6.278789)(16.071987,-1)
\psbezier[linecolor=black, linewidth=0.04](0.0119877625,-2.121211)(0.9311328,-1.7272918)(5.6119876,-2.5212111)(5.6119876,-4.121211)(5.6119876,-5.721211)(0.9735117,-6.395932)(0.0119877625,-6.121211)
\psbezier[linecolor=black, linewidth=0.04](6.811988,-2.121211)(7.731133,-1.7272918)(12.411987,-2.5212111)(12.411987,-4.121211)(12.411987,-5.721211)(7.773512,-6.395932)(6.811988,-6.121211)
\psline[linecolor=black, linewidth=0.04, arrowsize=0.05291666666666667cm 2.0,arrowlength=1.4,arrowinset=0.0]{->}(0.0119877625,-4.9212112)(1.6119877,-3.7212112)(3.6119878,-4.9212112)(4.811988,-4.121211)
\psdots[linecolor=black, dotsize=0.2](0.7319878,-4.3812113)
\psdots[linecolor=black, dotsize=0.2](1.6319878,-3.7212112)
\psdots[linecolor=black, dotsize=0.2](2.5319877,-4.2612114)
\psdots[linecolor=black, dotsize=0.2](3.2319877,-4.681211)
\psdots[linecolor=black, dotsize=0.3](6.391988,-4.081211)
\psdots[linecolor=black, dotsize=0.3](13.2119875,-4.081211)
\psdots[linecolor=black, dotsize=0.2](7.9919877,-4.8812113)
\psdots[linecolor=black, dotsize=0.2](9.571988,-4.081211)
\psdots[linecolor=black, dotsize=0.2](10.811988,-4.481211)
\psbezier[linecolor=black, linewidth=0.04, linestyle=dashed, dash=0.17638889cm 0.10583334cm](13.2119875,-4.021211)(13.318035,-3.3353179)(11.841863,-2.4992669)(10.331987,-2.621211)(8.822113,-2.7431555)(7.85262,-3.9640183)(7.911988,-4.7812114)
\psbezier[linecolor=black, linewidth=0.04, linestyle=dashed, dash=0.17638889cm 0.10583334cm](13.171988,-4.081211)(12.855649,-3.3043356)(11.591743,-2.9633052)(11.091988,-3.0212111)(10.592232,-3.079117)(9.761014,-3.247157)(9.551988,-4.001211)
\psbezier[linecolor=black, linewidth=0.04, linestyle=dashed, dash=0.17638889cm 0.10583334cm](13.171988,-4.041211)(12.87794,-3.5334666)(11.95948,-3.6035442)(11.551988,-3.7012112)(11.144495,-3.798878)(10.77331,-3.9758096)(10.791987,-4.4212112)
\psline[linecolor=black, linewidth=0.04](6.391988,-4.081211)(7.9719877,-4.8412113)
\psline[linecolor=black, linewidth=0.04, linestyle=dashed, dash=0.17638889cm 0.10583334cm, arrowsize=0.05291666666666667cm 2.0,arrowlength=1.4,arrowinset=0.0]{->}(8.031988,-4.8412113)(9.551988,-4.061211)(10.451988,-4.3812113)
\psbezier[linecolor=black, linewidth=0.04](6.431988,-4.041211)(5.839779,-3.351742)(5.2359557,-3.4028668)(4.711988,-3.5412111)(4.1880198,-3.6795554)(3.411738,-3.862069)(3.2319877,-4.621211)
\psbezier[linecolor=black, linewidth=0.04](6.4719877,-4.061211)(5.99486,-3.1115983)(4.6495295,-2.6312864)(3.5119877,-2.7212112)(2.3744462,-2.811136)(1.7609899,-3.0888)(1.6319878,-3.7212112)
\rput[bl](6.291988,-4.981211){\large $v_\alpha$}
\rput[bl](13.231988,-4.8412113){\large $v_{\alpha+1}$}
\rput[bl](2.4919877,-1.6412112){\Large $V_\alpha$}
\rput[bl](9.951988,-1.6412112){\Large $V_{\alpha+1}$}
\rput[bl](1.4719877,-5.2){\Large $P_\alpha$}
\rput[bl](9.471988,-5.2){\large $R$}
\end{pspicture}}
\caption{Extending $P_\alpha$ to $P_{\alpha+1}$.}
\label{extfig}
\end{figure}
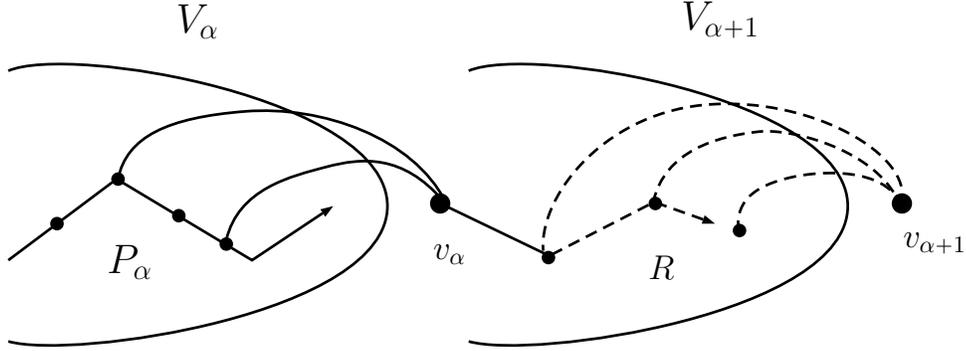

\vspace{0.2 cm}

 We let $P_{\alpha+1}=P_\alpha\hspace{0.01 cm}^\frown R\hspace{0.01 cm}^\frown(v_{\alpha+1})$. Note that $P_{\alpha+1}$ is a path as the infinite set $N_G(v_{\alpha+1})\cap (V_{\alpha+1}\setm V_\alpha)$  is cofinal in $R$ and hence below $v_{\alpha+1}$.
\end{proof}

We note that it is proved very similarly that if a set of vertices $A\subseteq V$ is an $\sat \omg$ trail then $A$ can be \emph{covered} by a path of order type $\omg$.

Finally, before the proof of Theorem \ref{2col}, we prove a simple result about finding trails in 2-edge colored copies of $K_\omg$.

\begin{lemma}\label{dich} Suppose that $c$ is a 2-edge coloring of $K_\omg$ and $A\subs \omg$. Then either $A$ is a trail in color 0 or we can find a copy of $K_\omg$ in color 1 (inside $A$). 

\end{lemma}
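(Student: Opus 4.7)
The plan is to split $A$ according to the color-$0$ degrees of its vertices. Assume $A$ is uncountable (otherwise the conclusion is degenerate), and set $A^\circ = \{v \in A : |N_0(v)\cap A|\leq \omega\}$.

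If $|A^\circ| = \omega_1$, I would build a color-$1$ copy of $K_{\omega_1}$ inside $A^\circ$ by a straightforward greedy transfinite recursion. At stage $\xi<\omega_1$ the forbidden set $\{a_\eta:\eta<\xi\}\cup\bigcup_{\eta<\xi}(N_0(a_\eta)\cap A^\circ)$ is a countable union of countable sets, hence countable, so I can pick $a_\xi$ in the uncountable $A^\circ$ outside it; by construction $c(a_\xi,a_\eta)=1$ for all $\eta<\xi$, and $\{a_\xi:\xi<\omega_1\}$ is the desired clique.

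If $|A^\circ|\leq \omega$, my aim is to show $A$ is a trail in color $0$. By Observation \ref{trailobs}(1) and (2) I can discard the countable set $A^\circ$ and assume every $v\in A$ satisfies $|N_0(v)\cap A|=\omega_1$. I then fix the club $\{V_\alpha:\alpha<\omega_1\}$ of countable initial segments provided by Lemma \ref{ellemma}, which by part (3) there is a witness to the trail property precisely when one exists. I would try to pick, for each $\alpha<\omega_1$, a $v_\alpha\in A\setminus V_\alpha$ with $N_0(v_\alpha)\cap(V_\alpha\setminus V_{\alpha'})\cap A$ infinite for every $\alpha'<\alpha$; if this succeeds at every stage, $A$ is a trail in color $0$ and we are done.

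The hard part is the failure mode. Suppose at some $\alpha^*$ no such $v_{\alpha^*}$ exists: then for every $v\in A\setminus V_{\alpha^*}$ there is an $\alpha'(v)<\alpha^*$ with $N_0(v)\cap(V_{\alpha^*}\setminus V_{\alpha'(v)})\cap A$ finite. A pigeonhole argument over the countably many values of $\alpha'(v)$, followed by another pigeonhole on the finite exceptional sets, produces an uncountable $B\subseteq A\setminus V_{\alpha^*}$, an $\alpha_1<\alpha^*$, and an infinite $D\subseteq(V_{\alpha^*}\setminus V_{\alpha_1})\cap A$ such that every pair in $B\times D$ is colored $1$. From here I need to extract from $B$ either a color-$1$ copy of $K_{\omega_1}$ (which supplies the second alternative for $A$) or a color-$0$ trail (which, by Observation \ref{trailobs}(1), would make $A$ a trail and contradict the standing assumption). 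Invoking the lemma itself on $B$ would be circular, so the real work lies in building such a clique inside $B$ directly, by a transfinite induction in which the extra bipartite color-$1$ edges to $D$ are used to stabilize the reserve of available candidates at the limit stages, where a naive intersection would otherwise collapse to a countable set.
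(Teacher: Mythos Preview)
Your case split on the set $A^\circ$ of vertices with countable color-$0$ degree is natural, and the greedy construction in the case $|A^\circ|=\omega_1$ is correct. The problem is entirely in your ``failure mode'' analysis, and you seem to sense this yourself.

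When the trail construction fails at a single level $\alpha^*$, the pigeonhole arguments you describe do produce an uncountable $B\subseteq A\setminus V_{\alpha^*}$ and a countable set $D\subseteq V_{\alpha^*}$ with all $B\times D$ edges colored $1$. But this bipartite information says \emph{nothing} about the coloring of edges \emph{within} $B$, which is exactly what you need to control in order to extract a color-$1$ copy of $K_{\omega_1}$. Your suggestion that the edges to $D$ can ``stabilize the reserve of available candidates at the limit stages'' does not work: $D$ is a fixed countable set, and knowing that every vertex of $B$ sees all of $D$ in color $1$ gives no leverage on the internal structure of $B$. There is no transfinite recursion here that avoids the circularity you correctly identified.

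The paper's argument is structurally different and supplies the missing idea. Rather than looking at a single failure level $\alpha^*$, one fixes $v_\alpha=\min(A\setminus V_\alpha)$ for \emph{every} $\alpha$ and considers the set $X$ of $\alpha$ at which the trail condition holds. Either $X$ contains a club (and $A$ is a trail), or its complement is stationary. In the stationary case, Fodor's pressing-down lemma stabilizes both the bad index $\nu_\alpha$ and the finite exceptional set $F_\alpha$ simultaneously on a stationary $T$. The crucial payoff is that for $\alpha<\beta$ in $T$ one has $v_\alpha\in (V_\beta\setminus V_\nu)\cap A\setminus F$, so the failure condition for $v_\beta$ forces $c(v_\alpha,v_\beta)=1$; thus $\{v_\alpha:\alpha\in T\}$ is itself the desired color-$1$ clique. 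The point is that a \emph{stationary family} of failures, pressed down, makes the failure witnesses interact with each other; a single failure level cannot do this. Your degree-based case split is not needed and does not help in the hard case.
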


 \begin{proof} Suppose that $A$ is not a trail in color 0. Let
   $V_\alpha=\omega\alpha \subs \omg$ (regarded as a set of vertices) for
   $\alpha<\omg$. Let $v_\alpha=\min (A\setm
   V_\alpha)$ for $\alpha<\omg$. Let $$X=\{\alpha<\omg: |N(v_\alpha,0) \cap
   (V_\alpha\setm V_{\alpha'}) \cap A|=\omega \text{ for all
   }\alpha'<\alpha\}.$$ If there is a club $C$ in $X$ then
   $\{V_\alpha:\alpha\in C\}$ witnesses that $A$ is a trail in color 0. Hence,
   as $X$ cannot contain a club, there is a stationary set $S$ in $\omg\setm
   X$. We can suppose, by shrinking $S$ to a smaller stationary set, that
   $\omega\alpha=\alpha$ for all $\alpha\in S$. Now for every
$\alpha \in S$ there is $\nu_\alpha<\alpha$ and finite $F_\alpha\subs V_\alpha$ such that 
$$F_\alpha=N(v_\alpha,0)\cap (V_\alpha\setm V_{\nu_\alpha}) \cap A.$$ By Fodor's
pressing down lemma we can find stationary $T\subs S$, $\nu<\omg$ and finite
set $F$ such that $$F=N(v_\alpha,0)\cap (V_\alpha\setm V_{\nu}) \cap A$$ for all
$\alpha\in T$. It is clear now that $B=\{v_\alpha:\alpha\in T\}\setm
(V_\nu\cup F)$ is an uncountable subset of $A$ and $c\uhp [B]^2\equiv 1$.
\end{proof}

Now, we are ready to prove the main result of this section:

\begin{proof}[Proof of Theorem \ref{2col}] Fix an edge coloring $r:[\omg]^2\to 2$ of the complete graph $K_{\omg}=(\omg,[\omg]^2)$. We distinguish two cases as follows:

 \textbf{Case 1:} There is a monochromatic copy $H_0$ of $\half$. 
 
We can suppose that $H_0$ is 
0-monochromatic by symmetry and let $A$ denote the main class of $H_0$. As $A$ is $\sat \omg$ in color $0$, we can extend $A$ to a \emph{maximal} subset $C \subseteq \omega_1$ that is $\sat \omg$ in color $0$. Note that, by the maximality of $C$,
\begin{equation}\label{eq:NycapC}
|N(v,0)\cap C|\leq \omega \text{ for all } v\in \omg\setm C,
\end{equation}
and in particular $\omg\setm C$ is $\sat \omg$ in color 1.

\textbf{Case 1A:} $\omg\setm C$ is countable.

Find a path $P^1$ in color 1 and of order type $\leq \omega$ which covers $\omg\setm C$ (see Corollary \ref{trivpath}). 
\begin{claim} $C\setm P^1$ is an $\omg$-connected trail in color 0.
\end{claim}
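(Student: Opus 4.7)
The plan is to verify both halves of the claim---$\omega_1$-connectedness and trailhood in color $0$---using the set-up already in place: $C \supseteq A$ is maximal $\omega_1$-linked in color $0$, $A$ is the main class of the monochromatic copy $H_0$ of $\halff{\omega_1}$ in color $0$, and $P^1$ is a path of order type at most $\omega$ (hence countable as a vertex set) whose vertices cover $\omg \setminus C$. In particular $\omg \setminus C \subseteq P^1$.

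For $\omega_1$-connectedness in color $0$, I plan to invoke the reformulation in Observation \ref{connobs}: it suffices to fix distinct $v,w \in C \setminus P^1$ and a countable set $F \subseteq (C \setminus P^1) \setminus \{v,w\}$ and produce a finite color-$0$ path from $v$ to $w$ inside $(C \setminus P^1) \setminus F$. Since $C$ is $\omega_1$-linked in color $0$, there are $\omega_1$ pairwise vertex-disjoint finite color-$0$ paths from $v$ to $w$ in $K_\omg$. Set $S = P^1 \cup F$, which is countable; by vertex-disjointness each vertex of $S$ is an interior point of at most one of these $\omega_1$ paths, so at most countably many of them meet $S$ while $\omega_1$ avoid $S$ altogether. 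Any such path lies in $\omg \setminus S$, and the containment $\omg \setminus C \subseteq P^1 \subseteq S$ forces $\omg \setminus S = (C \setminus P^1) \setminus F$, which is what is needed.

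For the trail property in color $0$, I plan to chain the three clauses of Observation \ref{trailobs}. The first sentence, together with the isomorphism $H_0 \cong \halff{\omega_1}$, shows that $A$ is a trail in $H_0$; since every edge of $H_0$ carries color $0$, the same club and witness vertices show $A$ is a trail in the color-$0$ graph $G_0 = (\omg, c^{-1}\{0\})$. Clause (1) of Observation \ref{trailobs} applied with $A \subseteq C \subseteq \omg$ then gives that $C$ is a trail in color $0$, and clause (2) applied with the countable set $B = P^1 \cap C \in [C]^{\omega}$ gives that $C \setminus P^1 = C \setminus (P^1 \cap C)$ is a trail in color $0$.

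The only subtle point is interpretive: ``avoiding $P^1$'' must be read as avoiding the vertex set of $P^1$, since that is precisely what puts $\omg \setminus C$ inside $P^1$ and lets the linkage argument close without leaving $C$. Once that identification is made, both halves of the claim reduce to routine applications of the lemmas and observations already available, so I do not anticipate a real obstacle here; the work has essentially been done in the preliminaries about $\omega_1$-linked sets and trails.
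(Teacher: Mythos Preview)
Your proposal is correct and follows essentially the same approach as the paper: both parts use Observation~\ref{connobs} together with the countability of $P^1$ (and the containment $\omg\setminus C\subseteq P^1$) for $\omg$-connectedness, and Observation~\ref{trailobs} (main class of $\halff{\omg}$ is a trail, superset, then remove a countable set) for trailhood. The only cosmetic point is that $P^1\cap C$ could in principle be finite rather than in $[C]^\omega$, but clause~(2) of Observation~\ref{trailobs} obviously extends to that case.
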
 

Indeed, if $v,w\in C\setm P^1$ and $F\subs V$ is countable then there is a path $P$ of color 0 from $v$ to $w$ which avoids $F\cup P^1$ as $C$ is $\sat \omg$ in color $0$; in particular, $P$ is also a subset of $C\setm P^1$ and in turn $C\setm P^1$ is $\omg$-connected in color $0$. By Observation \ref{trailobs} $C$ is a trail in color 0 witnessed by the copy of
$\half$, and hence , using Observation \ref{trailobs} again, $C\setm P^1$ remains a trail as well. This finishes the proof of the claim.

Hence, by Lemma \ref{trail}, $C\setm P^1$ is a path in color 0 which finishes the proof of Theorem \ref{2col} in Case 1A.

\textbf{Case 1B:} $\omg\setm C$ is uncountable.

\begin{claim}\label{halfcover} $\omg\setm C$ is covered by a copy of $\half$ in color 1 with main class $\omg\setm C$.
\end{claim}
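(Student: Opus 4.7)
The plan is to construct the copy of $\half$ by transfinite recursion, using the maximality of $C$ in an essential way. Since Case 1B assumes $\omg\setm C$ is uncountable (hence of size $\omg$), fix a bijective enumeration $\omg\setm C=\{a_\alpha:\alpha<\omg\}$; these vertices will serve as the main class of the copy. We then pick $\{b_\beta:\beta<\omg\}\subs C$ by recursion on $\beta$, requiring at stage $\beta$ that
\[
b_\beta\in \bigcap_{\alpha\le\beta}\bigl(N(a_\alpha,1)\cap C\bigr)\setm\{b_\gamma:\gamma<\beta\}.
\]

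The recursion is always feasible thanks to \eqref{eq:NycapC}: for every $v\in\omg\setm C$ the set $N(v,0)\cap C$ is at most countable, so $N(v,1)\cap C=C\setm N(v,0)$ is co-countable in $C$. Since $|C|=\omg$ (as $C$ contains the main class $A$ of a copy of $\half$), the intersection of the countable family $\{N(a_\alpha,1)\cap C:\alpha\le\beta\}$ is still co-countable in $C$; removing the countable set $\{b_\gamma:\gamma<\beta\}$ leaves $\omg$-many candidates, so $b_\beta$ exists.

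Once the recursion is complete, the assignment $(\alpha,0)\mapsto a_\alpha$, $(\alpha,1)\mapsto b_\alpha$ is injective (the $a_\alpha$'s lie in $\omg\setm C$ and the $b_\alpha$'s in $C$, and each sequence is itself $1$-$1$) and sends every edge $\{(\alpha,0),(\beta,1)\}$ of $\half$ (i.e.\ every pair with $\alpha\le\beta$) to the $1$-colored edge $\{a_\alpha,b_\beta\}$ in $K_\oo$. This exhibits a copy of $\half$ in color $1$ whose main class is precisely $\{a_\alpha:\alpha<\omg\}=\omg\setm C$, as required. There is no real obstacle: the entire substance of the proof has been preloaded into the maximality of $C$, which is exactly what guarantees that each vertex of $\omg\setm C$ has co-countably many $1$-neighbors in $C$ — the resource the greedy recursion consumes at each stage.
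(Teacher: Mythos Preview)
Your argument is correct and is essentially the same as the paper's: both enumerate $\omg\setm C$ and use (\ref{eq:NycapC}) to greedily pick, at stage $\beta$, a fresh vertex of $C$ joined in color $1$ to all earlier members of $\omg\setm C$. You spell out the co-countability reasoning a bit more explicitly, but the construction and the key idea are identical.
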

 \begin{proof}Note that $|N[X,1]\cap C|=\omg$ for all $X\in [\omg\setm C]^\omega$ by (\ref{eq:NycapC}). Enumerate $\omg\setm C$ as $\{x_\alpha:\alpha<\omg\}$ and inductively select $$y_\beta\in N[X_\beta,1]\cap C\setm \{y_\alpha:\alpha<\beta\}$$ for $\beta<\omg$ where $X_\beta=\{x_\alpha:\alpha\leq\beta\}$. Now $(\omg\setm C) \cup\{y_\alpha:\alpha<\omg\}$ is the desired copy of $\half$ in color 1.
\end{proof}

Let $H_1$ denote this copy of $\half$. Our goal is to mimic the proof of Lemma \ref{trail}  and, using $H_0$ and $H_1$, simultaneously construct two disjoint monochromatic paths (one in color 0
and one in color 1) which cover $V$.

Using Lemma \ref{ellemma} twice, the observation that the intersection of two clubs is itself a club, and also Observation \ref{trailobs}, we can fix a club $\{V_\alpha:\alpha<\omg\}$ in $\omg$ such that $C\cap (V_{\alpha+1}\setm V_\alpha)$ and $(\omg\setm C)\cap (V_{\alpha+1}\setm V_\alpha)$ are $\sat \omega$ in color 0 and 1, respectively, inside $V_{\alpha+1}\setm V_\alpha$ for all $\alpha<\omg$. Furthermore, we can suppose that $V_\alpha$ intersects $H_0$ and $H_1$ in initial segments of their respective $\half$ orderings for each $\alpha<\omg$. 

Now, we inductively construct disjoint sets $P^0_\alpha,P^1_\alpha$ and well orderings  $\prec^0_\alpha,\prec^1_\alpha$ such that
\begin{enumerate}[(i)]
\item $(P^i_\alpha,\prec^i_\alpha)$ is a path in color $i$ of order type $\omega$ for $i<2$,
\item $P^i_\beta$ end extends $P^i_\alpha$ for all $\alpha<\beta$ and $i<2$,
\item $A\cap P^0_\alpha$ is cofinal in $P^0_\alpha$ and $(\omg\setm C)\cap P^1_{\alpha}$ is cofinal in
$P^1_\alpha$,
\item $P^0_\alpha\cup P^1_\alpha=V_\alpha$
\end{enumerate}
for all $\alpha<\omg$.

First, apply Lemma \ref{simpath} to $G[V_1]$ and the sets $C_0 = C\cap V_1$, $C_1 = (\omg\setm C) \cap V_1$, $A_0 = A\cap V_1$ and $A_1 = \emptyset$ to obtain two disjoint paths covering $V_1$: $P^0_1$ in color 0 and $P^1_1$ in color 1, both of order type $\omega$. Since  $P^0_1$ and $P^1_1$ are of order type $\omega$, a subset of such a path is cofinal iff it is infinite. Lemma \ref{simpath} makes sure that $A\cap P^0_1$ as well as $(\omg\setm C)\cap P^1_1$ are infinite (note that $A_0$ is infinite), hence (iii) holds.

Suppose we have constructed $P^0_{\alpha},P^1_{\alpha}$ as above for $\alpha<\beta$. Note
 that $P^0_{<\beta}=\bigcup\{P_{\alpha}:\alpha<\beta\}$ is a path in color 0, 
$P^1_{<\beta}=\bigcup\{P^1_{\alpha}:\alpha<\beta\}$ is a path in color 1 and $A\cap P^0_{<\beta}$ is cofinal in $P^0_{<\beta}$ while $\omg\setm C$ is cofinal in $P^1_{<\beta}$. Thus if $\beta$ is limit we are done. Suppose that $\beta=\alpha+1$, i.e. $P^i_{<\beta}=P^i_\alpha$ for $i<2$. 

\begin{claim}\label{cl:back} 

\begin{enumerate}[(a)]
	\item There are $v^0_\alpha,w^0_\alpha\in V_{\alpha+1}\setm V_\alpha$ such that $A\cap V_\alpha\subseteq N(v^0_\alpha,0)$, $w^0_\alpha\in A$ and $c(v^0_\alpha,w^0_\alpha)=0$. 
	\item There are $v^1_\alpha,w_\alpha^1\in V_{\alpha+1}\setm (V_\alpha\cup\{v^0_\alpha,w^0_\alpha\})$ such that $(\omg\setm C )\cap V_\alpha\subseteq N(v^1_\alpha,1)$, $w^1_\alpha\in \omg\setm C$ and $c(v^1_\alpha,w^1_\alpha)=1$.
\end{enumerate}
\end{claim}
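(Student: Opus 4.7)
The plan is to exploit the asymmetric structure of the two monochromatic copies $H_0$ and $H_1$ of $\halff{\omg}$. Write $B_0=V(H_0)\setm A$ and $B_1=V(H_1)\setm(\omg\setm C)$ for their non-main classes. The key observation is that in $\halff{\omg}$ a non-main-class vertex of $\half$-index $\beta$ is joined to every main-class vertex of $\half$-index $\le \beta$; since $V_\alpha\cap H_i$ is an initial segment of $H_i$ in the $\half$-ordering, any non-main-class vertex of $H_i$ sitting in $V_{\alpha+1}\setm V_\alpha$ automatically has $\half$-index strictly beyond every index occurring in $V_\alpha\cap H_i$, and hence is joined in the appropriate color to every main-class vertex of $H_i$ lying in $V_\alpha$.

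Before applying this I would first refine, if necessary, the club $\{V_\alpha:\alpha<\omg\}$ so that at each successor step the four sets $(V_{\alpha+1}\setm V_\alpha)\cap A$, $(V_{\alpha+1}\setm V_\alpha)\cap B_0$, $(V_{\alpha+1}\setm V_\alpha)\cap(\omg\setm C)$ and $(V_{\alpha+1}\setm V_\alpha)\cap B_1$ are all infinite. Since all four of these are uncountable sets available to the elementary submodel construction behind Lemma \ref{ellemma}, this is routine. For part (a) I would then pick $v^0_\alpha$ to be any element of $B_0\cap(V_{\alpha+1}\setm V_\alpha)$; the observation above gives $A\cap V_\alpha\subseteq N(v^0_\alpha,0)$ for free. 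Next I would pick $w^0_\alpha\in A\cap(V_{\alpha+1}\setm V_\alpha)$ whose $\half$-index is at most that of $v^0_\alpha$; the defining property of $\halff{\omg}$ then immediately gives $c(v^0_\alpha,w^0_\alpha)=0$. Part (b) is proved identically using $H_1$ and color $1$ in place of $H_0$ and color $0$, now selecting $v^1_\alpha$ and $w^1_\alpha$ from $V_{\alpha+1}\setm(V_\alpha\cup\{v^0_\alpha,w^0_\alpha\})$; removing two vertices from infinite sets does no harm.

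The main obstacle, as I see it, is the refinement step itself: the club must give strictly growing $\half$-initial segments in both $H_0$ and $H_1$ with infinitely many vertices of each class in each successor increment, simultaneously with the properties already extracted from Lemma \ref{ellemma}. I expect this to be a straightforward intersection of countably many clubs combined with feeding $H_0$ and $H_1$ into the elementary submodel chain used in Lemma \ref{ellemma}, but it is the only place where the bookkeeping takes real care.
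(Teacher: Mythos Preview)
Your proposal is correct and follows essentially the same approach as the paper: both pick $v^0_\alpha$ from the non-main class of $H_0$ in $V_{\alpha+1}\setm V_\alpha$ and use the initial-segment property to get $A\cap V_\alpha\subseteq N(v^0_\alpha,0)$, then find $w^0_\alpha\in A\cap(V_{\alpha+1}\setm V_\alpha)$ joined to $v^0_\alpha$ in color $0$ via the $\halff{\omg}$ structure (and symmetrically for part (b)). In fact you are more explicit than the paper, which simply asserts ``we can now select $w^0_\alpha$'' without spelling out the $\half$-index comparison; your refinement of the club is exactly the bookkeeping the paper sweeps under the elementary-submodel rug.
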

 \begin{proof}
(a) We know that $V_\alpha$ intersects $H_0$ in an initial segment and hence any element $v^0_\alpha$ from $(V(H_0)\setm A)\cap (V_{\alpha+1}\setm V_\alpha)$ will satisfy $A\cap V_\alpha\subseteq N(v^0_\alpha,0)$. We can now select $w^0_\alpha\in A \cap(V_{\alpha+1}\setm V_\alpha)$ such that $c(v^0_\alpha,w^0_\alpha)=0$.

The proof of (b) is completely analogous to (a).
\end{proof}

\begin{figure}[H]%
\centering
\psscalebox{0.9 0.9} 
{
\begin{pspicture}(0,-2.62)(11.24,2.62)
\psbezier[linecolor=black, linewidth=0.04](0.02,1.8)(0.82,1.8)(5.62,1.36)(5.62,-0.4)(5.62,-2.16)(0.82,-2.6)(0.02,-2.6)
\psbezier[linecolor=black, linewidth=0.04](7.02,1.8)(7.705714,1.8)(11.82,1.36)(11.82,-0.4)(11.82,-2.16)(7.705714,-2.6)(7.02,-2.6)
\rput[bl](2.02,2.2){\Large $V_\alpha$}
\rput[bl](9.42,2.0){\Large $V_{\alpha+1}$}
\psline[linecolor=black, linewidth=0.04](7.62,0.2)(0.02,0.2)(0.02,-1.0)(7.62,-1.0)
\rput[bl](0.02,0.6){\Large $A$}
\psline[linecolor=black, linewidth=0.04, arrowsize=0.05291666666666667cm 2.0,arrowlength=1.4,arrowinset=0.0]{->}(0.02,-1.6)(0.82,-0.4)(1.82,-1.6)(2.82,-0.4)(3.62,-1.6)(5.02,-0.4)
\psdots[linecolor=black, dotsize=0.2](6.42,0.8)
\psdots[linecolor=black, dotsize=0.2](7.42,-0.6)
\psline[linecolor=black, linewidth=0.04, linestyle=dashed, dash=0.17638889cm 0.10583334cm, arrowsize=0.05291666666666667cm 2.0,arrowlength=1.4,arrowinset=0.0]{->}(6.42,0.8)(7.22,-0.4)
\psdots[linecolor=black, dotsize=0.2](0.82,-0.4)
\psdots[linecolor=black, dotsize=0.2](2.82,-0.4)
\psdots[linecolor=black, dotsize=0.2](1.82,-1.6)
\psdots[linecolor=black, dotsize=0.2](3.62,-1.6)
\psbezier[linecolor=black, linewidth=0.04, linestyle=dashed, dash=0.17638889cm 0.10583334cm](6.42,0.8)(5.62,1.6)(3.5978024,1.4095291)(2.62,1.2)(1.6421976,0.9904709)(0.82,0.4)(0.82,-0.4)
\psbezier[linecolor=black, linewidth=0.04, linestyle=dashed, dash=0.17638889cm 0.10583334cm](6.42,0.8)(5.62,1.2)(4.62,1.0)(4.02,0.8)(3.42,0.6)(2.82,0.0)(2.82,-0.4)
\rput[bl](7.02,0.8){\large $v^0_\alpha$}
\rput[bl](8.02,-0.6){\large $w^0_\alpha$}
\rput[bl](0.62,-2.2){\Large $P^0_\alpha$}
\psline[linecolor=black, linewidth=0.04, linestyle=dashed, dash=0.17638889cm 0.10583334cm, arrowsize=0.05291666666666667cm 2.0,arrowlength=1.4,arrowinset=0.0]{->}(7.42,-0.6)(8.02,-1.6)(9.82,-0.4)
\rput[bl](9.02,-1.8){\Large $Q^0_\alpha$}
\end{pspicture}
}\caption{Extending $P^0_\alpha$ to $P^0_{\alpha+1}$.}
\label{claimfig}
\end{figure}

Note that $P^i_\alpha\smf (v^i_\alpha,w^i_\alpha)$ is still a path in color $i$ for $i<2$; see Figure \ref{claimfig}. Now, let us find disjoint sets $Q^0_\alpha,Q^1_\alpha$ such that 
\begin{enumerate}
	\item $Q^i_\alpha$ is a path of color $i$ and order type $\omega$ for $i<2$,
	\item $Q^0_\alpha\cup Q^1_\alpha= (V_{\alpha+1}\setm V_\alpha) \setminus \{v^0_\alpha, v^1_\alpha\}$,
   \item the first point of $Q^i_\alpha$ is $w^i_\alpha$ for $i<2$,
	\item $A\cap Q^0_\alpha$ is cofinal in $Q^0_\alpha$ and $(\omg\setm C)\cap Q^1_{\alpha}$ is cofinal in
$Q^1_\alpha$.
\end{enumerate}
Similarly as above, this is easily done by setting $D = (V_{\alpha+1} \setminus V_\alpha) \setminus \{v^0_\alpha, v^1_\alpha\}$ and applying Lemma \ref{simpath} to $G[D]$ and $C_0 = C \cap D$, $C_1 = (\omega_1 \setminus C) \cap D$, $A_0 = A \cap D$ and $A_1 = \emptyset$. Note that $$P^i_{\alpha+1}=P^i_\alpha\smf (v^i_\alpha)\smf Q^i_\alpha$$ is as desired (for $i<2$).

Finally, let $P^i=\bigcup \{P^i_\alpha:\alpha<\omg\}$ for $i<2$. Then $P^0$ and $P^1$ are monochromatic paths of distinct colors which partition $\omg$.

\textbf{Case 2:} There is no monochromatic copy of $\half$. 

Lemma \ref{dich} implies that any 
uncountable set of vertices must be a trail in both colors. Let us find an uncountable $A\subs V$ which
 is $\omg$-connected in some color by Lemma \ref{uftrick2}. We can suppose that $A$ is $\omg$-connected in color 0 and extend $A$ to a \emph{maximal} $\omg$-connected set $C$ in color $0$.

\begin{claim} $V\setm C$ is countable and $\sat \omg$ in color 1.
\end{claim}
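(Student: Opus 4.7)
My plan has three steps: first, show that every $v\in V\setm C$ has only countably many color-0 neighbors in $C$ (this is forced by the maximality of $C$); second, use this to contradict the Case 2 assumption if $V\setm C$ were uncountable, by constructing a monochromatic copy of $\half$ in color 1; and third, directly produce the short color-1 paths needed to witness that $V\setm C$ is $\sat\omg$ in color 1.

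For the first step, I will argue that if some $v\in V\setm C$ had $|N(v,0)\cap C|=\omg$, then $C\cup\{v\}$ would still be $\omg$-connected in color 0, violating the maximality of $C$. To check $\omg$-connectedness, I fix $a,b\in C\cup\{v\}$ and a countable set $F\subseteq (C\cup\{v\})\setm\{a,b\}$. If $a,b\in C$, the $\omg$-connectedness of $C$ in color 0 directly provides a color-0 path in $C\setm F$. If $a=v$ and $b\in C$ (the symmetric case is analogous), I pick any $c\in (N(v,0)\cap C)\setm (F\cup\{b\})$, which exists because $N(v,0)\cap C$ is uncountable while $F\cup\{b\}$ is countable, and I concatenate the edge $vc$ with a color-0 path from $c$ to $b$ inside $C\setm F$. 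As a consequence, for every finite $X\subseteq V\setm C$ the set $N[X,1]\cap C=C\setm \bigcup_{x\in X}N(x,0)$ is the complement in $C$ of a countable set, hence uncountable.

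For the second step, suppose for contradiction that $V\setm C$ is uncountable and enumerate it as $\{x_\alpha:\alpha<\omg\}$. Setting $X_\beta=\{x_\alpha:\alpha\le\beta\}$, I inductively pick $y_\beta\in (N[X_\beta,1]\cap C)\setm \{y_\alpha:\alpha<\beta\}$; this is always possible by the previous step, since $N[X_\beta,1]\cap C$ is uncountable. Then $(V\setm C)\cup\{y_\alpha:\alpha<\omg\}$ is a copy of $\half$ in color 1 with main class $V\setm C$, contradicting the Case 2 hypothesis. This is essentially a re-run of the construction in Claim \ref{halfcover}, the only new ingredient being step~(1).

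For the third step, by Observation \ref{connobs} it suffices, given distinct $v,w\in V\setm C$ and a countable $F\subseteq V\setm\{v,w\}$, to find a color-1 path from $v$ to $w$ avoiding $F$. By step~(1), both $N(v,0)\cap C$ and $N(w,0)\cap C$ are countable, so $(N(v,1)\cap N(w,1)\cap C)\setm F$ is cocountable in $C$ and in particular nonempty; any $c$ in it yields the length-two color-1 path $v,c,w$ avoiding $F$. The real content of the proof is concentrated in step~(1); the other two steps are short, and the first is where the maximality of $C$ is genuinely used.
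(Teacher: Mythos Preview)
Your proof is correct and follows essentially the same route as the paper: use maximality of $C$ to get $|N(v,0)\cap C|\le\omega$ for $v\in V\setminus C$, deduce that $V\setminus C$ is $\sat\omg$ in color $1$, and then reproduce the construction of Claim~\ref{halfcover} to derive a contradiction if $V\setminus C$ were uncountable. One small slip: at the end of your first step you state the conclusion for \emph{finite} $X\subseteq V\setminus C$, but in the second step you apply it to the \emph{countable} sets $X_\beta$; the same argument (a countable union of countable sets is countable) works, so this is a trivial fix rather than a genuine gap.
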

\begin{proof}Indeed, by the maximality of $C$, it is easy to see that $|N(v,0)\cap C|\leq \omega$ for every $v \in V\setminus C$; this immediately gives that $V\setm C$ is $\sat \omg$ in color 1. Moreover, if $V\setm C$ is uncountable then the proof of Claim \ref{halfcover} shows that we can find a monochromatic copy of $\half$ which contradicts our assumption.
\end{proof}

Now cover $V\setm C$ by a path $P^1$ of color 1 and order type $\omega$ using Corollary \ref{trivpath}. 
By assumption, $C\setm P^1$ is still a  trail and remains $\omg$-connected in color 0; that is, $C\setm P^1$ is a path $P^0$ of color 0 by Lemma \ref{trail}. We conclude the proof by noting that $P^0\cup P^1$ is the desired partition.
\end{proof}

\section{Further results and open problems} \label{sc:problems}

In general, there are two directions in which one can aim to extend our results: 
investigate edge colored non-complete graphs; determine the exact number of 
monochromatic structures (paths, powers of paths) needed to cover 
the vertex set of a certain edge 
colored graph. 

First, for state-of-the-art results and problems concerning finite graphs 
 we refer the reader to A. Gy\'arf\'as 
\cite{Gy2}. Second, let us mention some results and problems about countably 
infinite graphs. Let $K_{\omega, \omega}$ denote the complete bipartite graph 
with two countably infinite classes. The following statements 
can be proved very similarly to our proof of Theorem \ref{rado}:

\begin{cclaim}\label{ctblbipartite} Let $c:E(K_{\omega, \omega})\to r$ for some 
$r\in \mbb N$. Then  the vertex set of $K_{\omega, \omega}$ can be partitioned 
into at most $2r-1$ monochromatic paths. Furthermore, for every $r\in \mbb N$ 
there is $c_r:E(K_{\omega, \omega})\to r$ so that the vertex set of $K_{\omega, 
\omega}$ cannot be covered by less than $2r-1$ monochromatic paths.
\end{cclaim}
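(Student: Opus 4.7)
The plan is to imitate Lemma~\ref{uftrick} separately on each side of the bipartition. Writing $V(K_{\omega,\omega}) = X \cup Y$ with both classes countably infinite, I fix non-principal ultrafilters $U_X$ on $X$ and $U_Y$ on $Y$. For each $x \in X$ the sets $\{N(x, i) : i < r\}$ partition $Y$, so exactly one lies in $U_Y$; let $d_X(x)$ be that color and set $X_i = d_X^{-1}(i)$; define $Y_j \subseteq Y$ symmetrically via $U_X$. Let $i^*$ be the unique color with $X_{i^*} \in U_X$. The standard ultrafilter intersection trick yields that each $X_i$ is $\lkd$ in color $i$ (via $Y$-intermediaries) and each $Y_j$ is $\lkd$ in color $j$ (via $X$-intermediaries). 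The crucial additional step is to show that $X_{i^*} \cup Y_{i^*}$ is $\lkd$ in color $i^*$: given $x \in X_{i^*}$ and $y \in Y_{i^*}$, one picks $x' \in X_{i^*} \cap N(y, i^*)$ (infinite, as both sets are in $U_X$) and then $y' \in N(x, i^*) \cap N(x', i^*)$ (infinite, as both are in $U_Y$), producing the color-$i^*$ path $x, y', x', y$; any prescribed finite obstruction is easily avoided. The family
\[
\{X_{i^*} \cup Y_{i^*}\} \cup \{X_i : i \neq i^*\} \cup \{Y_j : j \neq i^*\}
\]
thus consists of at most $2r - 1$ pairwise disjoint subsets partitioning $V$, each $\lkd$ in one color, and Lemma~\ref{simpath} produces the desired $2r - 1$ disjoint monochromatic paths covering $V$.

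\textbf{Lower bound.} I would construct $c_r$ by taking $X = \{x_0, \dots, x_{r-2}\} \sqcup X^*$ with $X^*$ countably infinite, and $Y = Y_0 \sqcup \cdots \sqcup Y_{2r-2}$ with each $Y_k$ countably infinite. Fix any surjection $f : \{0, \dots, 2r-2\} \to \{0, \dots, r-1\}$ and set $g_i(k) = (f(k) + 1 + i) \bmod r$; define $c(x, y) = f(k)$ for $x \in X^*,\ y \in Y_k$ and $c(x_i, y) = g_i(k)$ for $y \in Y_k$. Because $g_i(k) \neq f(k)$ and $g_i(k) \neq g_{i'}(k)$ for $i \neq i'$, each special $x_i$ sits in color components disjoint from $X^*$ and from every other $x_{i'}$; each such component is a bipartite star with apex $x_i$, so the monochromatic path containing $x_i$ covers at most three vertices—consuming $r - 1$ dedicated paths while absorbing only finitely many vertices of $Y$. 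On the other hand, the color-$\ell$ component of $X^*$ equals $X^* \cup \bigcup_{k \in f^{-1}(\ell)} Y_k$; since each infinite block $Y_k$ can donate only boundedly many vertices to the special paths, covering $Y_k$ forces an $X^*$-path of color $f(k)$, and as $f$ is surjective, all $r$ colors must host an $X^*$-path. The totals give $(r - 1) + r = 2r - 1$.

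\textbf{Main obstacles.} On the upper bound side, the technical heart is verifying that $X_{i^*} \cup Y_{i^*}$ is $\lkd$ in color $i^*$ even when $Y_{i^*} \notin U_Y$; this requires chaining a $U_X$-intersection argument (to locate $x'$) with a $U_Y$-intersection argument (to locate $y'$), and keeping track of which ultrafilter controls each step. On the lower bound side, the combinatorial accounting needs care: one must rigorously forbid two special vertices from sharing a single monochromatic path (exploiting that $g_i$ and $g_{i'}$ disagree on every $Y_k$) and show that dropping a color at $X^*$ strands an infinite block of $Y$ uncoverable by the remaining paths.
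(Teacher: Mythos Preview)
Your upper bound is correct and is exactly the bipartite adaptation of Lemma~\ref{uftrick} that the paper has in mind; the paper states Claim~\ref{ctblbipartite} without proof, only pointing to the method of Theorem~\ref{rado}, and your two-ultrafilter argument together with the verification that $X_{i^*}\cup Y_{i^*}$ is infinitely linked in colour $i^*$ is precisely what is needed before invoking Lemma~\ref{simpath}.

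For the lower bound the paper gives no construction at all, so there is nothing to compare against; your example is correct. Two small comments. First, the $2r-1$ blocks $Y_k$ are more than you need: the argument uses only that $f$ is surjective, so $r$ infinite blocks with $f$ the identity already suffice. Second, the accounting is cleanest if phrased as follows: any monochromatic path with at least two vertices and no special vertex lies in the $X^*$-component of its colour $\ell$, hence meets only those $Y_k$ with $f(k)=\ell$; since each $Y_k$ is infinite while the $r-1$ special paths and any singleton paths together cover only finitely many $Y$-vertices, every $k$ forces a non-special non-singleton path of colour $f(k)$, and surjectivity of $f$ yields at least $r$ such paths. This makes explicit why singleton paths cannot rescue a missing colour, the point you flag under ``main obstacles''.
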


\begin{cclaim} For every $r$-edge coloring of the random graph on $\mbb N$  we 
can partition the vertices into $r$ disjoint paths of distinct colors.
\end{cclaim}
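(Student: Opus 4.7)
The plan is to adapt Rado's proof of Theorem \ref{rado} by producing, via an ultrafilter tailored to the random graph, a partition of $V(R)$ into $r$ classes each of which is $\lkd$ in its designated color, after which Lemma \ref{simpath} finishes the job.

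The key point is to replace the arbitrary ultrafilter used in Lemma \ref{uftrick} by one adapted to $R$. By the defining extension property of the random graph, for every finite $F \subset V(R)$ there are infinitely many common neighbors of $F$, so the family $\{N_R(v) : v \in V(R)\}$ has the finite intersection property; extend it to an ultrafilter $U$ on $V(R)$, which is automatically non-principal since $\{w\} \cap N_R(w) = \emptyset$ rules out $U$ being generated by any singleton $\{w\}$. For each $i < r$ set
$$V_i = \{v \in V(R) : N_R(v, i) \in U\}.$$
Since $N_R(v) = \bigsqcup_{i<r} N_R(v, i)$ lies in $U$ for every vertex $v$, exactly one $V_i$ contains each vertex, and so $\{V_i : i < r\}$ partitions $V(R)$.

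Next I would verify that each $V_i$ is $\lkd$ in color $i$ by repeating the ultrafilter argument of Lemma \ref{uftrick}: for any two distinct $v, w \in V_i$ the set $N_R(v, i) \cap N_R(w, i)$ lies in $U$, and is therefore infinite, so for any finite $F \subset V(R) \setminus \{v,w\}$ one can choose a common color-$i$ neighbor $u \in N_R(v, i) \cap N_R(w, i) \setminus F$, producing a color-$i$ path $v, u, w$ avoiding $F$. Finally, applying Lemma \ref{simpath} with $C_i = V_i$ and $A_i = \emptyset$ yields disjoint monochromatic paths $P_i$ in colors $i < r$ whose union covers $V(R)$, giving the required partition.

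The only real obstacle is the choice of the ultrafilter — one needs $N_R(v) \in U$ for every vertex $v$, and it is precisely the extension property of $R$ that supplies the finite intersection property making this possible. After this input, the remainder of the argument is Rado's original proof essentially verbatim.
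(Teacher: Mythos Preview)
Your proof is correct and follows precisely the route the paper indicates (``very similarly to our proof of Theorem \ref{rado}''): the one modification needed over Lemma \ref{uftrick} is to choose the ultrafilter so that every $N_R(v)$ lies in it, and you correctly identify that the extension property of the random graph supplies the required finite intersection property. After that, Lemma \ref{simpath} applies verbatim.
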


Regarding Theorem \ref{tm:fonat1} we ask the following most general question:

\begin{prob}
What is the exact number of monochromatic $k^{th}$ powers of paths needed to 
partition the vertices of an $r$-edge colored complete graph on $\mbb N$?
\end{prob}

Naturally, any result aside from the resolved case of $k=r=2$ (see Theorem 
\ref{tm:fonat2}) would be very welcome. In particular:

\begin{prob}
Can we bound the number of monochromatic $k^{th}$ powers of paths needed to 
partition the vertices of an $r$-edge colored complete graph on $\mbb N$ by a 
function of $r$ and $k$?
\end{prob}

Finally, turning to arbitrary infinite complete graphs, we announce the 
following complete solution to Rado's problem from \cite{R}:

\begin{theorem}[D. T. Soukup, \cite{monopath2}] The vertices of a finite-edge colored infinite complete graph can be partitioned into disjoint monochromatic paths of different colour.
\end{theorem}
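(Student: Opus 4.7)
The plan is to extend the tools of Section \ref{longpath} to an arbitrary infinite cardinal $\kappa$ and any finite $r$, and to proceed by transfinite induction on $\kappa$, with the base case $\kappa=\omega$ supplied by Rado's Theorem \ref{rado} and the case $\kappa=\omg$, $r=2$ by Theorem \ref{2col}. First I would generalise the notion of trail from Definition \ref{traildef}: a set $A\subs V$ of cardinality $\kappa$ is a \emph{$\kappa$-trail} if the obvious analogue holds with a witnessing club in $[V]^{<\kappa}$, and I would check the natural generalisations of Observation \ref{trailobs} and Lemma \ref{trail}, which together yield that every $\kappa$-connected $\kappa$-trail of cardinality $\kappa$ is spanned by a path of order type $\kappa$. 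The uniform Lemma \ref{uftrick2} already furnishes a partition $\{V_i:i<r\}$ of $\kappa$ so that each $V_i$ is $\sat \kappa$ in colour $i$ and one distinguished $V_{i_c}$ is even $\kappa$-connected in colour $i_c$.

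The construction itself uses a continuous $\in$-chain of elementary submodels $\{M_\alpha:\alpha<\kappa\}$ of some $H(\chi)$, each of cardinality strictly less than $\kappa$ and containing the relevant parameters, with $\kappa\subs\bigcup_\alpha M_\alpha$. By recursion on $\alpha$ I would build pairwise disjoint end-extending partial paths $\{P^\alpha_i:i<r\}$, with $P^\alpha_i$ of colour $i$, so that $\bigcup_{i<r}P^\alpha_i=\kappa\cap M_\alpha$ and so that, as an invariant, the tail of each $P^\alpha_i$ still admits $\kappa$-many one-step extensions in colour $i$ into $V_i\setm M_\alpha$. At a successor stage $\alpha\to\alpha+1$ the slice $\kappa\cap(M_{\alpha+1}\setm M_\alpha)$ has cardinality strictly below $\kappa$ and carries an induced $r$-colouring; the inductive hypothesis then produces an $r$-coloured path-partition of the slice, which is grafted onto the current tails by choosing bridging vertices in complete analogy with Claim \ref{cl:back}. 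Limit stages are handled by taking unions, and the invariant together with the continuity of the chain ensures that the resulting union is again a genuine partial path.

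The main obstacle I expect is the bridging step for the colours $i\ne i_c$: there $V_i$ is merely $\sat \kappa$, and the short paths witnessing this property may pass through vertices that have already been assigned to a different colour's path, so the bridges need not be available when required. To resolve this I would mimic the dichotomy employed at $\kappa=\omg$ in the proof of Theorem \ref{2col}: either there is a monochromatic copy of the bipartite graph $H_{\kappa,\kappa}$ in colour $i$ whose main class furnishes a large reservoir of ready-made bridges (and is itself a $\kappa$-trail), or, by a maximality argument, $V_i$ can be enlarged to a set $C_i\supseteq V_i$ that is $\kappa$-connected in colour $i$ and whose complement has cardinality strictly less than $\kappa$, so that the complement is absorbed directly into the inductive hypothesis on the small slice. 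Showing that this dichotomy can be realised simultaneously for all $r$ colours, and that the resulting choices can be executed coherently along the submodel chain while preserving the tail-richness invariant, is where I expect the real combinatorial difficulty to lie.
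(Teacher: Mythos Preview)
The paper does not contain a proof of this theorem: it is merely \emph{announced} in Section~\ref{sc:problems} as a result from \cite{monopath2}, so there is no in-paper argument to compare your proposal against.

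That said, your proposal is explicitly a plan rather than a proof, and the gap you yourself identify---realising the dichotomy of Lemma~\ref{dich} simultaneously for all $r$ colours and coherently along the chain---is genuine and is the heart of the matter; you have not resolved it. There are also structural problems with the framework as stated. Your transfinite induction is on the cardinal $\kappa$, and you build a continuous $\in$-chain $\{M_\alpha:\alpha<\kappa\}$ of submodels each of size strictly less than $\kappa$ with $\kappa\subs\bigcup_\alpha M_\alpha$; this already presupposes that $\kappa$ is regular. For singular $\kappa$ the club combinatorics in $[V]^{<\kappa}$, the analogue of Fodor's lemma used in Lemma~\ref{dich}, and the elementary-submodel filtration all behave very differently, so the scheme as written does not cover that case. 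Even at regular $\kappa$, the inductive hypothesis you invoke on the slice $\kappa\cap(M_{\alpha+1}\setminus M_\alpha)$ gives you \emph{some} $r$-coloured path partition of the slice, but you need much more control: the first points of the slice-paths must be connectible in the correct colours to the current tails, and the last points must leave the tail-richness invariant intact for the next stage. Arranging this while the bridging vertices for colour $i\ne i_c$ may lie inside other colours' already-built paths is exactly the unresolved difficulty, and nothing in Lemma~\ref{simpath} or Claim~\ref{cl:back} handles more than two colours or cardinals beyond $\omega_1$.
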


\section{Acknowledgments}

The authors were supported by the grant OTKA 113047.

The first author was supported by the grant OTKA 104178. The third author was partially supported by the Ontario Trillium Fellowship at the University of Toronto.

\end{document}